\newcommand{\Lim}[1]{\raisebox{0.6ex}{\scalebox{0.9}{$\displaystyle \lim_{#1}\;$}}}
\numberwithin{equation}{section}
\newtheorem{thm}{Theorem}[section]
\newtheorem{lem}[thm]{Lemma}
\theoremstyle{definition}
\theoremstyle{remark}
\numberwithin{equation}{section}
\begin{document}

\title[Coupled Hartree type equations ]{\textsc{Existence and stability of standing waves for coupled nonlinear Hartree type equations}
}

\author[SANTOSH BHATTARAI]{ SANTOSH BHATTARAI }
\address{}
\email{sbmath55@gmail.com, bhattarais@trocaire.edu}

\keywords{coupled Hartree equations; standing waves; stability; variational methods}
\subjclass[2010]{35Q55, 35B35}

%\date{}

\maketitle

\begin{abstract}
We study existence and stability of standing waves for coupled nonlinear Hartree type equations
\[
-i\frac{\partial}{\partial t}\psi_j=\Delta \psi_j+\sum_{k=1}^m \left(W\star |\psi_k|^p \right)|\psi_j|^{p-2}\psi_j,
\]
where $\psi_j:\mathbb{R}^N\times \mathbb{R}\to \mathbb{C}$ for $j=1, \ldots, m$ and the potential $W:\mathbb{R}\to [0, \infty)$ satisfies certain assumptions.
Our method relies on a variational characterization of standing waves based on minimization of the energy when $L^2$ norms
of component waves are prescribed. We obtain existence and stability results for two and three-component systems and for a certain range of $p$.
In particular, our argument works in the case when $W(x)=|x|^{-\alpha}$ for some $\alpha>0.$

\end{abstract}

\section{Introduction}

The Pekar energy functional
\[
\mathcal{P}(\phi)=\frac{1}{2}\int_{\mathbb{R}^3}|\nabla \phi(x)|^2\ dx-\int_{\mathbb{R}^3\times \mathbb{R}^3}\frac{ |\phi(x)|^2|\phi(y)|^2}{|x-y|}\ dxdy
\]
arises from an approximation to the Hartree-Fock theory for
one component plasma as discussed in Lieb's paper \cite{Lieb1}. Here $\phi$ represents
the wave function of the electron.
For the energy functional of
the electronic wave function,
it is natural to impose the normalization constraint that ${\textstyle \int_{\mathbb{R}^3}|\phi|^2}\ dx$ be held constant.
The minimizer of the
problem of minimizing $\mathcal{P}(\phi)$ under the normalization condition solves the equation
\begin{equation}\label{choq}
\begin{aligned}
   -\Delta \phi+\lambda \phi = \left(\int_{\mathbb{R}^3}\frac{|\phi(y)|^2}{|x-y|}\ dy \right)\phi,\ \ \int_{\mathbb{R}^3}|\phi|^2\ dx =M>0,
\end{aligned}
\end{equation}
where $\lambda$ is the Lagrange multiplier.
Depending on the context of the application, the equation \eqref{choq} is also
called the Choquard equation or Schr\"{o}dinger-Newton equation.
The theory for nonlinear Choquard equation and its variants is fairly well developed in the mathematics
literature by now, though there are
still many interesting open questions. A complete survey of
available results goes  beyond  the  scope  of  this  paper; we only refer the
interested reader to \cite{SBC, Lieb1, Lio3, Lio, MaLi}.
The theory for coupled systems of such equations is much
less developed, though they, too, arise as models for a variety of physical phenomena.
Considered herein are the
coupled systems of nonlinear
Schr\"{o}dinger equations with nonlocal interaction in the form
\begin{equation}\label{NChoM}
-\Delta \phi_j+\lambda_j\phi_j=\sum_{k=1}^m \left(W\star |\phi_k|^p \right)|\phi_j|^{p-2}\phi_j\ \ \mathrm{in}\ \mathbb{R}^N,\ 1\leq j\leq m,
\end{equation}
where $\star$ denotes the convolution operator and $W:\mathbb{R}^N\to [0, \infty)$ is the
convolution potential satisfying certain assumptions (see below).
The information about the properties of the system \eqref{NChoM} does not change with the time and it is said to be in a stationary state.

\smallskip

By a solution of \eqref{NChoM} we mean a pair consisting of a function $(\phi_1, \ldots, \phi_m)$ in the space
$Y_m=(\mathrm{H}^1(\mathbb{R}^N))^m$ and $\lambda=(\lambda_1, \ldots, \lambda_m)\in \mathbb{R}^m$ solving the system \eqref{NChoM}.
(Here $\mathrm{H}^1(\mathbb{R}^N)$ denotes the $L^2$-based Sobolev space of complex-valued functions on $\mathbb{R}^N$.)
Solutions $(\phi_1, \ldots, \phi_m; \lambda)$ of \eqref{NChoM} can be obtained
as critical points of the functional
\begin{equation}
\begin{aligned}
\mathcal{I}(\phi_1,\ldots, \phi_m)=\frac{1}{2}\sum_{j=1}^m\int_{\mathbb{R}^N}|\nabla \phi_j|^2\ dx
-\frac{1}{2p}\sum_{k, j=1}^m\int_{\mathbb{R}^N}\left(W\star|\phi_k|^p\right)|\phi_j|^p \ dx
\end{aligned}
\end{equation}
subject to the constraints that ${\textstyle \int_{\mathbb{R}^N}|\phi_j|^2}\ dx,\ 1\leq j\leq m,$ be held constants.
In other words, the nonlocal Schr\"{o}dinger system \eqref{NChoM}
arises as the Euler-Lagrange
equations for the
problem of finding
\begin{equation}
I_{M_1, \ldots, M_m}^{(m)}=\inf\left\{\mathcal{I}(\phi): \phi=(\phi_1,\ldots, \phi_m)\in Y_m,\ \int_{\mathbb{R}^N}|\phi_j|^2\ dx=M_j,\ 1\leq j\leq m \right\}.
\end{equation}
The unknown $\lambda_j$ in the system \eqref{NChoM} appear as Lagrange multipliers.
Given any solution $(\phi_1, \ldots, \phi_m; \lambda)$ of \eqref{NChoM}, the functions $\psi_j:\mathbb{R}^N\times (0,\infty)\to \mathbb{C}$
defined by $
\psi_j(x,t)=e^{-i\lambda_jt}\phi_j(x)
$
depends on the time explicitly and
the wave function $(\psi_1, \ldots, \psi_m)$
is called a standing wave for time-dependent Schr\"{o}dinger system with nonlocal nonlinearities
\begin{equation}\label{NChoM2}
-i\frac{\partial}{\partial t}\psi_j=\Delta \psi_j+\sum_{k=1}^m \left(W\star |\psi_k|^p \right)|\psi_j|^{p-2}\psi_j,\ 1\leq j\leq m\ .
\end{equation}
Systems of the form \eqref{NChoM2} are also called nonlinear Hartree like systems.
Motivation for the theoretical studies of coupled nonlinear Schr\"{o}dinger equations or Hartree equations
comes with the recent remarkable
experimental advances in multi-component Bose-Einstein condensates (\cite{Anderso}).
As pointed out in (\cite{Lieb7, Lio8}),
nonlinear Hartree type systems with the Coulomb potential $W(x)=|x|^{-1}$ are also used
as models to describe the interaction between electrons in the Hartree-Fock theory in Quantum Chemistry.
The interaction between electrons is said to be repulsive (resp. attractive) when the sign in front of
the interaction
terms in the Hamiltonian is positive (resp. negative). Systems of the form considered in this paper arise
as models for a variety
of physical situations in which quantum particles interactive attractively. Examples include boson stars,
systems of polarons in a lattice, and some Bose gases.
For a discussion of how the Hartree type equation appears as a mean-field limit for many-particle
boson systems, the reader may consult \cite{2Ref1,2Ref2, 2Ref3}.
The two-component nonlinear Hartree type systems with $W(x)=\delta(x)$ (the delta function) has applications especially in nonlinear optics (\cite{Me1a, Me1b}).
Nonlocal nonlinearities
have attracted considerable interest as means of eliminating collapse and
stabilizing multidimensional solitary waves, as was shown in the context of optics (\cite{Bang9}). It appears naturally in optical systems (\cite{Lit1})
and is also known to influence the propagation of electromagnetic waves in plasmas (\cite{Ber5}).
In the theory of Bose-Einstein condensation, nonlocality accounts for the finite-range many-body interaction (\cite{Dal2}).

\smallskip

The purpose of this paper is twofold. First, we prove the precompactness of minimizing sequences for two-parameter variational
problem $I_{M_1, M_2}^{(2)}.$ As a consequence
we obtain existence and stability of two-parameter family of standing waves for coupled nonlinear Hartree equations.
Another purpose of this paper is to generalize the arguments to establish the precompactness of minimizing sequences
for the three-parameter problem $I_{M_1, M_2, M_3}^{(3)}$.
This leads to results concerning existence and stability
of true three-parameter family of standing waves for coupled
nonlinear Hartree equations. To our knowledge, this is the first paper which
establishes existence and stability of standing waves for 3-coupled Hartree type systems under three independent normalization constraints.

\smallskip

The key to our analysis is the concentration compactness lemma of P. L. Lions (Lemma~I.1 of \cite{Lio}).
For single nonlinear dispersive evolution equations in which the
variational problems characterizing standing waves take the form
\[
\mathrm{minimize}\  \mathcal{A}(u)=\int_{\mathbb{R}^N}A(u(x), \nabla u(x))\ dx\ \ \mathrm{s.t.}\ \int_{\mathbb{R}^N}|u|^2\ dx=M>0,
\]
the concentration
compactness technique is widely used for proving the relative compactness of
minimizing sequences (and hence the stability
of the set of minimizers provided that both the energy $\mathcal{A}$ and the mass functional are conserved by the flow associated to the evolution equation, see \cite{CLi}).
Quite differently from the one-parameter case, its application for showing
the relative compactness of minimizing sequences of variational problems under two or more constraint parameters,
however, seems to be more
complicated.
In particular, putting the method
into practice requires ruling out the case which Lions called \textit{dichotomy} by establishing
certain strict inequality for the function of constraint parameter(s). For one-parameter variational
problems, as stated in Lions' paper \cite{Lio}, preventing dichotomy is equivalent to
verifying the strict inequality in the form
\begin{equation}\label{ST1}
I_{M}<I_{T}+I_{M-T},\ \forall T\in [0, M),
\end{equation}
where $I_{M}$ denotes the infimum of $\mathcal{A}$ over $\left\{u\in H^1(\mathbb{R}^N):\int_{\mathbb{R}^N}|u|^2\ dx=M \right\}.$
In \cite{[A]}, J. Albert has illustrated the method by proving the strict inequality in a slightly different form
\begin{equation}\label{ST2}
I_{M_1+M_2}<I_{M_1}+ I_{M_2},\ \forall M_1, M_2>0.
\end{equation}

\smallskip

More recently, the method of preventing dichotomy of minimizing sequences for
two-parameter variational problems was developed in \cite{[AB11]} (see also \cite{[SB3]}).
In order to employ strategies of \cite{[AB11]}
for the problem $I_{M_1, M_2}^{(2)}$,
one requires to verify the strict inequality
\begin{equation}\label{ST3}
I_{M_1+T_1, M_2+T_2}^{(2)}<I_{M_1, M_2}^{(2)}+I_{T_1, T_2}^{(2)}
\end{equation}
for all $M=(M_1, M_2), T=(T_1, T_2)\in \mathbb{R}_{+}^2\cup \{\mathbf{0}\}$ satisfying
$M, T\neq \{\mathbf{0}\}$ and $M+T\in \mathbb{R}_{+}^2.$ (Here $\mathbb{R}_{+}$ denotes the interval
$(0, \infty)$ and $\mathbb{R}_{+}^2=\mathbb{R}_{+} \times \mathbb{R}_{+}.$)
While several techniques are available to prove the strict inequality for one-parameter problems,
the proof of strict inequality for two-parameter problems such as \eqref{ST3}, even for the most universal
choice of coupling terms, is much less understood.
Furthermore, when one generalizes the strict inequality \eqref{ST3} for $m$-parameter problem $I_{M_1, \ldots, M_m}^{(m)},$ it
takes the form
\begin{equation}\label{V2def}
I_{M_1+T_1, \ldots, M_m+T_m}^{(m)}<I_{M_1, \ldots, M_m}^{(m)}+I_{T_1, \ldots, T_m}^{(m)}
\end{equation}
and one requires to verify \eqref{V2def} for all possible cases based on the values
\[
M=(M_1,\ldots, M_m), T=(T_1, \ldots, T_m)\in \mathbb{R}_{+}^m\cup \{\mathbf{0}\},\
M, T\neq \{\mathbf{0}\}\ M+T\in \mathbb{R}_{+}^m.
\]
This makes the situation even more complicated for $m$-parameter problems and
the problem of employing the machinery of compactness by concentration under multiple constraints
remains widely open.
The task of proving the strict inequalities for $I_{M_1,M_2}^{(2)}$ and the three-parameter problem $I_{M_1,M_2, M_3}^{(3)}$, and preventing
dichotomy of minimizing sequences
will occupy us through most of
Sections~\ref{section2} and \ref{section3}.

\smallskip

\noindent For any $1\leq r<\infty$, we denote by $L_w^r(\mathbb{R}^N)$ (the weak $L^r$ space) the set of all
measurable functions $f:\mathbb{R}^N\to \mathbb{C}$
such that
\[
\|f\|_{L_w^r}=\sup_{M>0}M|\{ x:|f(x)|>M\}|^{1/r}<\infty.
\]
Throughout the paper, we require the power $p$ and the convolution potential $W\in L_w^{r}(\mathbb{R}^N)$ to satisfy the following assumptions
\begin{itemize}
\item[(h0)] The power $p$ satisfies
\[
2\leq p< \frac{2r-1 }{r }+\frac{2}{N}\ \ \mathrm{with}\ \  \frac{1}{r}<\frac{2}{N}.
\]
\item[(h1)] The potential $W:\mathbb{R}^N\to [0, \infty)$ is radially symmetric i.e., $W(x)=W(|x|)$, and satisfies $W(r)\to 0$ as $r\to \infty.$
\item[(h2)] There exists $\Gamma$ satisfying $\Gamma<2+2N-pN$ such that
\[
W(\theta \xi)\geq \theta^{-\Gamma}W(\xi)\ \ \mathrm{for\ any}\ \theta>1.
\]
\end{itemize}
The results in this paper hold for the Coulomb type potential $W(x)=|x|^{-\alpha}$ for some $\alpha>0.$
Our main results are as follows:
\begin{thm}\label{exisTHM}
Suppose $m=2,3$ and the assumptions (h0), (h1), and (h2) hold.
For every $M=(M_1, \ldots, M_m)\in \mathbb{R}_{+}^m,$ define
\[
\Lambda^{(m)}(M)=\left\{\phi=(\phi_1, \ldots, \phi_m)\in Y_m:\mathcal{I}(\phi)=I_M^{(m)},\ \|\phi_j\|_{L^2}^2=M_j,\ 1\leq j\leq m \right\}.
\]
The following statements hold:

\smallskip

$(a)$ For every $M=(M_1, \ldots, M_m)\in \mathbb{R}_{+}^m,$ there exists a nonempty set $\Lambda^{(m)}(M)\subset Y_m$ such that
for every $\phi\in \Lambda^{(m)}(M)$, there exists $(\lambda_1, \ldots, \lambda_m)$ such that $\psi_j(x,t)=e^{-i\lambda_jt}\phi_j$
is a standing wave for \eqref{NChoM2} satisfying $\int_{\mathbb{R}^N}|\phi_j|^2\ dx=M_j,\ 1\leq j\leq m.$

\smallskip

$(b)$ For every complex-valued minimizer $\phi$ of $I_{M}^{(m)},$ there exists $\theta_j\in \mathbb{R}$ and real-valued functions
$\widetilde{\phi}_j$ such that
\[
\widetilde{\phi}_j(x)>0\ \ \mathrm{and}\ \ \phi_j(x)=e^{i\theta_j}\widetilde{\phi}_j(x),\ \ \forall x\in \mathbb{R}^N,\ 1\leq j\leq m.
\]
\end{thm}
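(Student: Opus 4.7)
The plan is to prove part (a) by applying Lions' concentration-compactness lemma to a minimizing sequence for $I_M^{(m)}$, and to deduce part (b) from the diamagnetic inequality combined with a strong maximum principle applied to the Euler-Lagrange system.

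For part (a), I would fix $M \in \mathbb{R}_{+}^m$ and take a minimizing sequence $\phi^n = (\phi_1^n, \ldots, \phi_m^n) \in Y_m$ with $\|\phi_j^n\|_{L^2}^2 = M_j$ and $\mathcal{I}(\phi^n) \to I_M^{(m)}$. The first step is to show that the sequence is bounded in $Y_m$, which requires estimating the nonlocal coupling terms $\int (W\star|\phi_k|^p)|\phi_j|^p\,dx$ via the weak-$L^r$ generalization of the Hardy--Littlewood--Sobolev inequality. The subcriticality condition (h0) guarantees, after a Gagliardo--Nirenberg interpolation, that this nonlocal quantity is controlled by $\sum_j\|\nabla \phi_j\|_{L^2}^2$ raised to an exponent strictly less than one, yielding both coercivity of $\mathcal{I}$ on the constraint and $I_M^{(m)} > -\infty$. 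A scaling argument using (h2)---dilating $\phi_j(x) \mapsto \theta^{N/2}\phi_j(\theta x)$ and invoking the lower bound $W(\theta\xi) \geq \theta^{-\Gamma}W(\xi)$ on the potential---shows that $I_M^{(m)} < 0$, which will be crucial for ruling out vanishing.

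Next, setting $\rho^n(x) = \sum_{j=1}^m |\phi_j^n(x)|^2$, I would apply Lions' concentration-compactness lemma to $\rho^n$. Vanishing is excluded because if $\rho^n$ vanished uniformly on balls of every fixed radius, then interpolation would force $\|\phi_j^n\|_{L^q}\to 0$ for every subcritical Lebesgue exponent $q>2$, and the weak-$L^r$ HLS estimate would then force the coupling term to tend to $0$, contradicting $\mathcal{I}(\phi^n) \to I_M^{(m)} < 0$. The hardest step, and the one on which the whole argument rests, is excluding dichotomy: this requires the strict subadditivity inequality \eqref{ST3} for $m=2$ and its three-parameter analog for $m=3$, which is precisely what Sections~\ref{section2} and~\ref{section3} establish. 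Once dichotomy is excluded, compactness up to translations yields $\phi_j^n(\cdot+y_n) \rightharpoonup \phi_j$ weakly in $H^1$ with locally strong $L^q$ convergence; combined with weak lower semicontinuity of $\sum_j\|\nabla\phi_j\|_{L^2}^2$ and continuity of the nonlocal term along tight minimizing sequences, this gives $\phi \in \Lambda^{(m)}(M)$. The Lagrange multipliers $\lambda_j$ then arise in the standard way since the constraints $\|\phi_j\|_{L^2}^2 = M_j$ have linearly independent differentials at $\phi$, and the functions $\psi_j(x,t) = e^{-i\lambda_j t}\phi_j(x)$ are the required standing waves.

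For part (b), the diamagnetic inequality $|\nabla|\phi_j|| \leq |\nabla\phi_j|$ shows that replacing each component by its modulus preserves the $L^2$ constraint and weakly decreases the kinetic energy, while the coupling integrals depend only on $|\phi_j|^p$. Hence $(|\phi_1|,\ldots,|\phi_m|)$ is also a minimizer, so equality must hold in the diamagnetic inequality on each component, which forces $\phi_j = e^{i\theta_j}|\phi_j|$ for a constant phase $\theta_j$ wherever $\phi_j \neq 0$. To upgrade $|\phi_j| \geq 0$ to $|\phi_j| > 0$ everywhere---which also rules out disconnected nodal components and so justifies a single phase---I would apply the strong maximum principle to the Euler--Lagrange equation
\[
-\Delta|\phi_j| + \lambda_j|\phi_j| = \Bigl(\sum_{k=1}^m W\star|\phi_k|^p\Bigr)|\phi_j|^{p-1},
\]
whose right-hand side is nonnegative; standard regularity combined with a Harnack inequality for Schr\"odinger operators then rules out interior zeros. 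The principal obstacle of the theorem, as noted, is the dichotomy step in (a), whose resolution is deferred to the later sections.
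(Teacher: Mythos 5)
Your proposal matches the paper's approach step for step: boundedness and negativity of $I_M^{(m)}$ via HLS/Gagliardo--Nirenberg and a scaling argument with (h2), exclusion of vanishing via Lions' lemma, exclusion of dichotomy via the strict subadditivity inequalities of Sections~\ref{section2}--\ref{section3}, compactness of translated minimizing sequences, Lagrange multipliers, and, for part~(b), the diamagnetic inequality plus the observation that the nonlocal terms depend only on $|\phi_j|$ combined with a strong maximum principle. This is the same argument the paper assembles in Lemmas~\ref{NegIM2}--\ref{revslem}, \ref{2MainSlemIa}--\ref{2MainlemEx}, and \ref{MainSlemI}--\ref{3MainlemEx}.
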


\smallskip

We recall here that for the initial-value problem for \eqref{NChoM2} to be (local) well-posed, its solution $\psi(x,t)=(\psi_1(x,t), \ldots, \psi_m(x,t))$ should exist for some $T>0$ for arbitrary choices of the initial data $\psi(x,0)=(\psi_1(x,0), \ldots, \psi_m(x,0))$ in the function class $Y_m,$
and the solution should be unique and depend continuously
on the initial data. In the next result, we assume that the initial-value
problem for \eqref{NChoM2} satisfies the well-posedness property.
Moreover, the following conservation laws hold:
\[
\mathcal{I}(\psi(\cdot, t))=\mathcal{I}(\psi(\cdot, 0));\ \int_{\mathbb{R}^N}|\psi_j(x,t)|^2\ dx=\int_{\mathbb{R}^N}|\psi_{j}(x, 0)|^2\ dx,\ 1\leq j \leq m.
\]
\begin{thm}\label{StaTHM}
Under the same hypotheses as in Theorem~\ref{exisTHM}, the set $\Lambda^{(m)}(M)$ is stable for the
associated initial-value problem of \eqref{NChoM2}, i.e.,
for every $\varepsilon>0,$ there exists $\delta(\varepsilon)>0$ such that whenever $(\psi_{01}, \ldots,\psi_{0m})\in Y_m$ satisfies
\[
\inf_{\phi\in \Lambda^{(m)}(M)}\|(\psi_{01}, \ldots,\psi_{0m})-\phi\|_{Y_m}\leq \delta(\varepsilon),
\]
then any solution $\psi(\cdot, t)=(\psi_1(\cdot, t), \ldots, \psi_m(\cdot, t))$ of \eqref{NChoM2} with initial datum $\psi_j(\cdot,0)=\psi_{0j}$ satisfies
\[
\sup_{t\geq 0}\inf_{\phi\in \Lambda^{(m)}(M)}\|\psi(t,\cdot)-\phi\|_{Y_m}< \varepsilon.
\]
\end{thm}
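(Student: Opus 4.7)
The plan is to reduce Theorem \ref{StaTHM} to the precompactness property of minimizing sequences for $I_M^{(m)}$ that is the main output of Theorem \ref{exisTHM}, via the standard Cazenave--Lions contradiction argument. Suppose the conclusion fails. Then there exist $\varepsilon_0>0$, initial data $\psi_{0n}=(\psi_{0n,1},\ldots,\psi_{0n,m})\in Y_m$ with $\inf_{\phi\in\Lambda^{(m)}(M)}\|\psi_{0n}-\phi\|_{Y_m}\to 0$, and times $t_n>0$ such that, writing $u_n=\psi_n(\cdot,t_n)$ for the corresponding (globally existing, by the a priori $H^1$ bound furnished below) solution,
\[
\inf_{\phi\in\Lambda^{(m)}(M)}\|u_n-\phi\|_{Y_m}\geq \varepsilon_0.
\]

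The first step converts $(u_n)$ into a true minimizing sequence for $I_M^{(m)}$. Pick $\phi_n\in\Lambda^{(m)}(M)$ with $\|\psi_{0n}-\phi_n\|_{Y_m}\to 0$. Continuity of $\mathcal{I}$ and of the $L^2$ norms on $Y_m$ (the nonlocal term being continuous by Hardy--Littlewood--Sobolev under (h0)) together with the conservation laws yield
\[
\|u_{n,j}\|_{L^2}^2=\|\psi_{0n,j}\|_{L^2}^2\to M_j\quad\text{and}\quad \mathcal{I}(u_n)=\mathcal{I}(\psi_{0n})\to I_M^{(m)},
\]
so $(u_n)$ is bounded in $Y_m$ by the coercivity of $\mathcal{I}$ on the constraint surface (itself a byproduct of (h0)). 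Rescaling each component by $\alpha_{n,j}:=\sqrt{M_j}/\|u_{n,j}\|_{L^2}\to 1$, set $v_{n,j}=\alpha_{n,j}u_{n,j}$, so that $\|v_{n,j}\|_{L^2}^2=M_j$ exactly while $\|v_n-u_n\|_{Y_m}\to 0$. Continuity of $\mathcal{I}$ on bounded subsets of $Y_m$ then gives $\mathcal{I}(v_n)\to I_M^{(m)}$, and $(v_n)$ is a minimizing sequence for $I_M^{(m)}$.

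The second step invokes the precompactness of minimizing sequences (up to translations) established in Sections \ref{section2} and \ref{section3}. Passing to a subsequence, there exist $y_n\in\mathbb{R}^N$ and $\phi\in\Lambda^{(m)}(M)$ with $v_n(\cdot+y_n)\to \phi$ in $Y_m$. Translation invariance of $\mathcal{I}$ and of the $L^2$ norms gives $\tilde\phi_n:=\phi(\cdot-y_n)\in \Lambda^{(m)}(M)$, hence
\[
\inf_{\psi\in\Lambda^{(m)}(M)}\|u_n-\psi\|_{Y_m}\leq \|u_n-v_n\|_{Y_m}+\|v_n(\cdot+y_n)-\phi\|_{Y_m}\to 0,
\]
contradicting the lower bound $\varepsilon_0$ and proving the theorem. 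The bulk of the technical work lies not in this stability argument but in the precompactness invoked in the second step, whose hardest ingredient is the strict subadditivity \eqref{V2def} ruling out dichotomy under multiple constraints; once \eqref{V2def} and nonvanishing (facilitated by (h1) and (h2)) are established, the passage from Theorem \ref{exisTHM} to Theorem \ref{StaTHM} is routine.
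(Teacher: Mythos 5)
Your proposal is correct and follows essentially the same Cazenave--Lions contradiction argument as the paper: assume instability, extract initial data converging to $\Lambda^{(m)}(M)$ with later-time snapshots staying $\varepsilon_0$-far, use conservation of $\mathcal{I}$ and the $L^2$ norms plus a small $L^2$ rescaling to turn those snapshots into a minimizing sequence for $I_M^{(m)}$, and then invoke precompactness (up to translations) of minimizing sequences to reach a contradiction. You are slightly more explicit than the paper in handling the translations $y_n$ and noting that $\phi(\cdot - y_n)\in\Lambda^{(m)}(M)$ by translation invariance, which is a clean way to close the argument; the paper arrives at the same conclusion but phrases the final step less carefully.
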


\section{The variational problem}
In this section, we prove number of lemmas which are needed in the sequel to prove our main results.
Throughout this section we do not distinguish the case $m=2$ and $m=3.$ The results of this section remain hold for an arbitrary $m.$

\smallskip

In what follows,
for $s>0,$ we denote by $\Sigma_s$ the sphere
\[
\Sigma_s=\left\{f\in H^1(\mathbb{R}^N):\int_{\mathbb{R}^N}|f|^2\ dx=s \right\}.
\]
We always denote $m$-tuples in $\mathbb{R}_{+}^m$ as
$M=(M_1, \ldots, M_m)$, $T=(T_1, \ldots, T_m)$, etc.
For any $M\in \mathbb{R}_{+}^m,$ we write
$
\Sigma_M^{(m)}=\Sigma_{M_1}\times \ldots \times \Sigma_{M_m}.
$
To avoid tedious expressions, we often write
\[
Q(f,g)=|f(x)|^p|g(y)|^p\ \  \mathrm{for}\ x,y\in \mathbb{R}^N
\]
and for any $q>0,$ we shall denote the Coulomb-type potential by
\begin{equation}\label{QdefA}
\mathbb{F}_q(f,g)=\frac{1}{q}\int_{\mathbb{R}^N\times \mathbb{R}^N}W(|x-y|)Q(f,g)\ dxdy.
\end{equation}
We will make use of the following Hardy-Littlewood-Sobolev inequality.
\begin{lem}
For every $f\in L^q(\mathbb{R}^N), g\in L_w^{r}(\mathbb{R}^N),$ and $h\in L^t(\mathbb{R}^N)$ with $1<q, r, t<\infty$ and
${\textstyle \frac{1}{q}+\frac{1}{r}+\frac{1}{t}=2},$ there exists $C=C(q, N, r)>0$ such that
\[
\left\vert\int_{\mathbb{R}^N\times \mathbb{R}^N}f(x)g(x-y)h(x)\ dxdy \right\vert\leq C\|f\|_{L^q}\|g\|_{L_w^{r}}\|h\|_{L^t}.
\]
\end{lem}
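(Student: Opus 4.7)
The plan is to establish this as the weak-type Hardy--Littlewood--Sobolev (HLS) inequality. A preliminary remark: with both $f$ and $h$ evaluated at $x$ as typeset, Fubini would collapse the left-hand side to $\|g\|_{L^1}\int f(x)h(x)\,dx$, which is infinite for generic $g\in L^r_w$ with $r>1$; the bound with the scaling $\tfrac1q+\tfrac1r+\tfrac1t=2$ only makes sense when the third factor is read at $y$, as in the trilinear integrand $f(x)g(x-y)h(y)$ that appears in the Coulomb-type functional $\mathbb{F}_q$ of \eqref{QdefA}. I will prove the statement in that standard form.

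First, I would reduce the claim, by duality, to a weak Young convolution estimate. Writing
\begin{equation*}
\Bigl|\iint f(x)g(x-y)h(y)\,dx\,dy\Bigr| = \Bigl|\int f(x)(g\ast h)(x)\,dx\Bigr| \le \|f\|_{L^q}\|g\ast h\|_{L^{q'}},
\end{equation*}
where $\tfrac1q+\tfrac1{q'}=1$, the hypothesis $\tfrac1q+\tfrac1r+\tfrac1t=2$ rewrites as $1+\tfrac1{q'}=\tfrac1r+\tfrac1t$, which is precisely the Young convolution scaling for weak-$L^r * L^t \hookrightarrow L^{q'}$. Thus it suffices to establish $\|g\ast h\|_{L^{q'}}\le C\|g\|_{L^r_w}\|h\|_{L^t}$.

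Next, I would prove this weak Young inequality via a layer-cake decomposition. For each $\lambda>0$ one splits $g = g^\lambda + g_\lambda$ with $g^\lambda := g\,\chi_{\{|g|>\lambda\}}$. The weak-$L^r$ control $|\{|g|>s\}|\le\|g\|_{L^r_w}^r s^{-r}$ and a routine integration yield
\begin{equation*}
\|g^\lambda\|_{L^a}\le C\|g\|_{L^r_w}\lambda^{1-r/a}\ (a<r),\qquad \|g_\lambda\|_{L^b}\le C\|g\|_{L^r_w}\lambda^{1-r/b}\ (b>r).
\end{equation*}
Choosing $a,b$ so that strong Young places $g^\lambda \ast h\in L^{q'}$ and $g_\lambda \ast h\in L^\infty$, one would then bound the distribution function
\begin{equation*}
\bigl|\{|g\ast h|>2\mu\}\bigr|\le \bigl|\{|g^\lambda \ast h|>\mu\}\bigr|+\bigl|\{|g_\lambda \ast h|>\mu\}\bigr|,
\end{equation*}
estimate each piece by Chebyshev combined with strong Young, optimize $\lambda=\lambda(\mu)$, and integrate $\int_0^\infty \mu^{q'-1}|\{|g\ast h|>\mu\}|\,d\mu$ to recover the strong $L^{q'}$ norm.

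The principal obstacle will be this second step, where the weak-type hypothesis on $g$ must be upgraded to a strong convolution bound. The interior range $1<q,r,t<\infty$ provided by (h0) is essential to keep the Marcinkiewicz interpolation inside its domain of validity, away from endpoints where the argument fails. A shortcut would be to invoke O'Neil's generalized Young inequality on Lorentz spaces, $L^{r,\infty}\ast L^{t,t}\hookrightarrow L^{q',q'}$, which packages the estimate cleanly and avoids the explicit layer-cake bookkeeping.
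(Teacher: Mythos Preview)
Your proof outline is correct and complete: the reduction by H\"older to a weak Young convolution estimate, followed by a layer-cake splitting (or equivalently an appeal to O'Neil's Lorentz-space Young inequality), is the standard route to the weak-type Hardy--Littlewood--Sobolev inequality, and your observation that the integrand must be read as $f(x)g(x-y)h(y)$ is well taken.

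The paper, however, does not prove this lemma at all: its entire proof is the one-line citation ``See Lieb and Loss, Analysis \cite{Lieb}.'' So your approach differs from the paper's only in that you actually supply an argument where the paper defers to a reference. What this buys you is a self-contained account and an explicit identification of where the hypothesis $1<q,r,t<\infty$ is used (to stay in the interior of the interpolation range); what the paper's approach buys is brevity, since the result is classical and the authors treat it as a black box.
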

\begin{proof}
See Lieb and Loss, Analysis \cite{Lieb}.
\end{proof}
In what follows we use the Sobolev interpolation inequalities
\begin{equation}\label{GNinq}
\left(\int_{\mathbb{R}^N}|u|^s\ dx \right)^{2/s}\leq C(s, N)\left(\int_{\mathbb{R}^N}|\nabla u|^2\ dx \right)^\theta \left( \int_{\mathbb{R}^N}|u|^2\ dx \right)^{1-\theta}
\end{equation}
holds for every $u\in H^1(\mathbb{R}^N)$ and $s$ such that $2\leq s\leq \infty$
if $N=1,$ $2\leq s <\infty$ if $N=2,$ and $2\leq s \leq 2N/(N-2)$ if $N\geq 3,$ where $C(s, N)>0$ and $\theta$ satisfies
\[
\frac{N}{s}=\frac{\theta(N-2)}{2}+\frac{(1-\theta)N}{2}.
\]
The following lemma shows that
$I_M^{(m)}$ is well posed and minimizing sequences are uniformly bounded in $Y_m.$
\begin{lem}\label{NegIM2}
For $M=(M_1, \ldots, M_m)\in \mathbb{R}_{+}^m,$ let $\{(u_1^n, \ldots, u_m^n)\}_{n\geq 1}$ be any sequence in $Y_m$ satisfying
\[
\mathcal{I}(u_1^n, \ldots, u_m^n)\to I_M^{(m)}\ \ \mathrm{and}\ \ \lim_{n\to \infty}\|u_j^n\|_{L^2}^2=M_j,\ 1\leq j\leq m.
\]
Then there exists a constant $C>0$ such that
$
\sum_{j=1}^m\|u_j^n\|_{H^1(\mathbb{R}^N)}^2\leq C
$
for all $n.$ Moreover, for every $M\in \mathbb{R}_{+}^{m},$ one has
\[
-\infty<I_M^{(m)}<0.
\]
\end{lem}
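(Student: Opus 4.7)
\medskip

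\noindent\textbf{Proof proposal.} The statement has two essentially independent components: (i) coercivity of $\mathcal{I}$ on sequences with bounded $L^2$ norms, which simultaneously delivers the uniform $H^1$ bound on $\{(u_1^n,\ldots,u_m^n)\}$ and the lower bound $I_M^{(m)}>-\infty$; and (ii) the strict negativity $I_M^{(m)}<0$, for which it suffices to exhibit a single admissible configuration where $\mathcal{I}$ is negative. These two tasks align with the two analytic assumptions: (h0) encodes the sub-criticality required for (i), while (h2) is exactly the scaling hypothesis used in (ii).

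For (i), I would dominate the nonlocal interaction by a small multiple of the kinetic energy plus a bounded remainder. First, apply the Hardy--Littlewood--Sobolev lemma to each interaction integral with the symmetric choice $q=t=\tfrac{2r}{2r-1}$ (so that $\tfrac{1}{q}+\tfrac{1}{r}+\tfrac{1}{t}=2$); this bounds the integral by $C\|W\|_{L_w^{r}}\|\phi_j\|_{L^s}^p\|\phi_k\|_{L^s}^p$ with $s=pq=\tfrac{2pr}{2r-1}$. The assumption $\tfrac{1}{r}<\tfrac{2}{N}$ together with $p\geq 2$ places $s$ in the admissible Sobolev range, so the Gagliardo--Nirenberg inequality \eqref{GNinq} with interpolation exponent $\alpha=\tfrac{N}{2}-\tfrac{N}{s}$ yields $\|\phi_j\|_{L^s}^p\leq C\|\nabla\phi_j\|_{L^2}^{p\alpha}\|\phi_j\|_{L^2}^{p(1-\alpha)}$. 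A direct computation gives
\[
p\alpha \;=\; \tfrac{N}{2}\Bigl(p - \tfrac{2r-1}{r}\Bigr) \;<\; 1,
\]
where the strict inequality is precisely the upper bound on $p$ in (h0). Hence $2p\alpha<2$, and Young's inequality $x^{2p\alpha}\leq \varepsilon x^2 + C_\varepsilon$ combined with the uniform $L^2$ bound $\|u_j^n\|_{L^2}^2\leq M_j+o(1)$ yields, for $\varepsilon$ small,
\[
\mathcal{I}(u_1^n,\ldots,u_m^n)\;\geq\; c_1\sum_{j=1}^m\|\nabla u_j^n\|_{L^2}^2 - c_2,
\]
with positive constants $c_1,c_2$ depending only on $M$. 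Since $\mathcal{I}(u_1^n,\ldots,u_m^n)\to I_M^{(m)}$, this produces both the desired $H^1$ bound and $I_M^{(m)}\geq -c_2 > -\infty$.

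For (ii), I would run the standard scaling argument. Fix any $\phi=(\phi_1,\ldots,\phi_m)$ with each $\phi_j\in C_c^\infty(\mathbb{R}^N)\setminus\{0\}$ and $\|\phi_j\|_{L^2}^2=M_j$, and consider the $L^2$-preserving dilations $\phi_j^\theta(x)=\theta^{N/2}\phi_j(\theta x)$ for $\theta\in(0,1)$. The kinetic energy scales as $\theta^2$, while a change of variables in the nonlinearity gives
\[
\int_{\mathbb{R}^N\times\mathbb{R}^N} W(x-y)|\phi_k^\theta(y)|^p|\phi_j^\theta(x)|^p\,dxdy \;=\; \theta^{pN-2N}\int_{\mathbb{R}^N\times\mathbb{R}^N} W\!\left(\tfrac{x-y}{\theta}\right)|\phi_k(y)|^p|\phi_j(x)|^p\,dxdy.
\]
Applying (h2) with the dilation factor $1/\theta>1$ yields $W(\xi/\theta)\geq \theta^{\Gamma} W(\xi)$, whence
\[
\mathcal{I}(\phi^\theta)\;\leq\; \tfrac{\theta^2}{2}\sum_{j=1}^m\|\nabla\phi_j\|_{L^2}^2 \;-\; \theta^{pN-2N+\Gamma}\sum_{k,j=1}^m \mathbb{F}_{2p}(\phi_k,\phi_j).
\]
The last sum is strictly positive by the positivity of $W$ and the nontriviality of the $\phi_j$. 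Since $pN-2N+\Gamma<2$ is precisely (h2), the exponent inequality implies that for $\theta\in(0,1)$ the term $\theta^{pN-2N+\Gamma}$ dominates $\theta^2$ as $\theta\to 0^+$, and hence $\mathcal{I}(\phi^\theta)<0$ for all sufficiently small $\theta$. This gives $I_M^{(m)}<0$.

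The main technical obstacle is the threshold check in (i): one must identify the correct symmetric HLS exponents and verify that the resulting Gagliardo--Nirenberg exponent $p\alpha$ is strictly below $1$ under \emph{exactly} the hypothesis (h0). Once this margin is recognized, the rest is routine Young-inequality absorption. The scaling step in (ii) is straightforward provided one tracks carefully the direction of the inequality coming from (h2) after the substitution $\theta \leftrightarrow 1/\theta$, which is the reason the hypothesis is stated for $\theta>1$ but must be applied to small dilations.
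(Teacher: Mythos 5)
Your proposal is correct and follows essentially the same route as the paper: Hardy--Littlewood--Sobolev with symmetric exponents plus Gagliardo--Nirenberg to obtain a subquadratic bound on the interaction term (your $p\alpha<1$ is the paper's $2\mu p<2$, and your absorption via Young's inequality is equivalent to the paper's direct quadratic-vs-subquadratic comparison), and the same $L^2$-preserving dilation together with hypothesis (h2) applied to $1/\theta>1$ for the strict negativity.
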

\begin{proof}
We begin with the following observation. In view of the Hardy-Littlewood-Sobolev inequality, the integral
\[
\mathbb{F}_q(f,g)=\frac{1}{q}\int_{\mathbb{R}^N\times \mathbb{R}^N}W(x-y)|f(x)|^p|g(y)|^p\ dxdy,\ q>0,
\]
is well-defined if $|f|^p, |g|^p\in L^t(\mathbb{R}^N)$ for all $t>1$ satisfying the condition
\[
\frac{1}{t}+\frac{1}{r}+\frac{1}{t}=2,\ \ \mathrm{or}\ \ t=\frac{2r}{2r-1}.
\]
By our assumption, we have that
\[
\frac{1}{tp}=\frac{2r-1}{2pr}=\frac{1}{p}-\frac{1}{2pr}>\frac{1}{p}-\frac{2N+2-pN}{2Np}=\frac{1}{2}-\frac{1}{Np}\geq \frac{N-2}{2N}.
\]
It follows that $|f|^p\in L^{\textstyle \frac{2r}{2r-1}}(\mathbb{R}^N)$ for every $f\in \mathrm{H}^1(\mathbb{R}^N).$
Using the Hardy-Littlehood-Sobolev inequality and the Gagliardo-Nirenberg inequality, we obtain that
\begin{equation}
\begin{aligned}\label{ESTa}
\mathbb{F}_q(u_j^n, u_k^n) = \frac{1}{q}\int_{\mathbb{R}^N}(W\star|u_k^n|^p)&|u_j^n|^p\ dx
 \leq C\|W\|_{L_w^{r}}\|u_j^n\|_{L^{\frac{2pr}{2r-1} }}^p\|u_k^n\|_{L^{\frac{2pr}{2r-1} }}^p\\
& \leq C \|u_j^n\|_{L^2}^{(1-\mu)p}\|\nabla u_j^n\|_{L^2}^{\mu p}\|u_k^n\|_{L^2}^{(1-\mu)p}\|\nabla u_k^n\|_{L^2}^{\mu p}\\
& \leq C \|\nabla u_j^n\|_{L^2}^{\mu p}\|\nabla u_k^n\|_{L^2}^{\mu p},
\end{aligned}
\end{equation}
where $\mu=(Nrp-2Nr+N)/2rp.$
To show that $\{(u_1^n, \ldots, u_m^n)\}_{n\geq 1}$ is bounded, using the estimate \eqref{ESTa} and the fact that the sequence $\{\mathcal{I}(u_1^n, \ldots, u_m^n)\}_{n\geq 1}$ is bounded in $\mathbb{R},$ we obtain
\[
\begin{aligned}
\frac{1}{2}\|(u_1^n, \ldots, u_m^n)\|_{Y_m}^2& =\mathcal{I}(u_1^n, \ldots, u_m^n)+\sum_{k, j=1}^m\mathbb{F}_{2p}(u_k^n,u_j^n)
+\frac{1}{2}\sum_{j=1}^m\|u_j^n\|_{L^2}^2 \\
&\leq C(N, r, p, M) \left( 1+\|(u_1^n, \ldots, u_m^n)\|_{Y_m}^{2\mu p}\right).
\end{aligned}
\]
By the assumption (h0), we have $2\mu p = Nrp-2Nr+N<2.$ Then it follows that $\{(u_1^n, \ldots, u_m^n)\}_{n\geq 1}$ is bounded in $Y_m.$
The proof that $I^{(m)}_M>-\infty$ is immediate from \eqref{ESTa} and we omit the details.

\smallskip

We next prove that $I^{(m)}_M<0$.
It is enough to show that there exists $(\widetilde{u}_1, \ldots,\widetilde{u}_m)\in Y_m$
such that $(\widetilde{u}_1, \ldots,\widetilde{u}_m)\in \Sigma_{M}^{(m)}$ and $\mathcal{I}(\widetilde{u}_1, \ldots,\widetilde{u}_m)<0.$
Start by picking $u_1\in \Sigma_{M_1}$ and define $u_j=(M_j/M_1)^{1/2}u_1$ for $2\leq j\leq m.$
Consider the functions $u_j^\theta:\mathbb{R}^N\to \mathbb{R}$ defined by
$
u_j^\theta(x)=\theta^{N/2}u_j(\theta x)$ for $1\leq j\leq m.$
Then one has $\widetilde{u}=(u_1^\theta, \ldots, u_m^\theta)\in \Sigma_M^{(m)}$ and for every $0<\theta<1,$ we compute
\[
\begin{aligned}
\int_{\mathbb{R}^{N}}(W\star|u_k^\theta|^p |u_j^\theta|^p\ dx &=\int_{\mathbb{R}^{N}\times \mathbb{R}^{N}} \theta^{Np} W(|x-y|)|u_j(\theta x)|^p|u_k(\theta y)|^p\ dxdy \\
& = \int_{\mathbb{R}^{N}\times \mathbb{R}^{N}} \theta^{Np} W(\theta^{-1}|\theta x-\theta y|)|u_j(\theta x)|^p|u_k(\theta y)|^p\ dxdy\\
 & \geq \int_{\mathbb{R}^{N}\times \mathbb{R}^{N}}\theta^{Np-2N+\Gamma}W(|x-y|)|u_k(y)|^p|u_j(x)|^p\ dxdy,
\end{aligned}
\]
where in the last inequality we used the assumption (h2).
Using this estimate, a direct computation yields
\begin{equation}\label{negEST}
\begin{aligned}
\mathcal{I}(u_1^\theta, \ldots, u_m^\theta)& \leq \frac{\theta^2}{2}\sum_{j=1}^m\|\nabla u_j\|_{L^2}^2-\theta^{Np-2N+\Gamma } \sum_{k,j=1}^m \int_{\mathbb{R}^{N}}(W\star|u_k|^p)|u_j|^p\ dx\\
&=\frac{\theta^2}{2}\sum_{j=1}^m\|\nabla u_j\|_{L^2}^2-\Omega\theta^{Np-2N+\Gamma }  \int_{\mathbb{R}^{N}}(W\star|u_1|^p)|u_1|^p\ dx,
\end{aligned}
\end{equation}
where the number $\Omega=\Omega(M_1, \ldots, M_m, p)>0$ is given by
\[
\Omega=\frac{1}{2p}+\frac{1}{p}\sum_{j=2}^m \left(\frac{M_j}{M_1}\right)^{p/2}+\frac{1}{2p}\sum_{k, j=2}^m\left(\frac{M_j}{M_1} \right)^{p/2}\left(\frac{M_k}{M_1} \right)^{p/2}>0.
\]
By our assumption $Np-2N+\Gamma<2,$ it follows from \eqref{negEST} that $\mathcal{I}(u_1^\theta, \ldots, u_m^\theta)<0$ for sufficiently small $\theta$ and
consequently, we get $I^{(m)}_M<0.$
\end{proof}

\begin{lem}\label{tech1}
Define the functional $E:H^1(\mathbb{R}^N)\to \mathbb{R}$ as follows
\[
E(h) =\frac{1}{2}\|\nabla h \|_{L^2}^2- \frac{1}{2p} \int_{\mathbb{R}^N\times \mathbb{R}^N}W(|x-y))Q(h,h)\ dxdy.
\]
Let $M\in \mathbb{R}_{\geq 0}^m$ be such that $M_1+\ldots+M_m>0$ and suppose
that $\{(u_1^n, \ldots, u_m^n)\}_{n\geq 1}$ be any minimizing sequence for $I_M^{(m)}.$
Then for each $j$ with $M_j>0$ and
any number $\Gamma>1,$ there exists $\delta>0$ (independent of $n$) such that for sufficiently large $n,$
\begin{equation}\label{stricIn}
E\left(\Gamma^{1/2} u_j^n\right)\leq \Gamma E\left(u_j^n\right)-\delta.
\end{equation}
\end{lem}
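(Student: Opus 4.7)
The plan is to reduce \eqref{stricIn} to a uniform positive lower bound on the self-interaction $\mathbb{F}_{2p}(u_j^n, u_j^n)$ and then to establish that bound by contradiction. A direct expansion of $E$ yields
\begin{equation*}
E(\Gamma^{1/2} u_j^n) - \Gamma E(u_j^n) = -\frac{\Gamma^p - \Gamma}{2p}\int_{\mathbb{R}^N\times\mathbb{R}^N} W(|x-y|)|u_j^n(x)|^p|u_j^n(y)|^p\,dxdy,
\end{equation*}
and since $p\geq 2$ and $\Gamma > 1$ the prefactor $(\Gamma^p-\Gamma)/(2p)$ is strictly positive. Thus it suffices to produce $c>0$ (independent of $n$) with $\mathbb{F}_{2p}(u_j^n, u_j^n) \geq c$ eventually; then taking $\delta = c(\Gamma^p-\Gamma)$ gives the lemma.

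To obtain this bound I would assume by contradiction that, along a subsequence, $\mathbb{F}_{2p}(u_j^n, u_j^n)\to 0$. Lemma~\ref{NegIM2} guarantees that $\{u_j^n\}$ is bounded in $H^1(\mathbb{R}^N)$, so the concentration alternatives of Lions are available. In the non-vanishing case, after translating by suitable $y_n\in\mathbb{R}^N$ and extracting, the translated sequence has a nontrivial weak $H^1$-limit $\tilde u_j$ with a.e.\ convergence. Since $W\geq 0$ and the convolution integral is translation invariant, Fatou's lemma forces $\int\int W|\tilde u_j|^p|\tilde u_j|^p\,dxdy = 0$; combined with the positivity of $W$ on a sufficiently large set implied by (h1)-(h2), this contradicts $\tilde u_j\not\equiv 0$. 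For $W(x)=|x|^{-\alpha}$ the potential is positive a.e., so this step is immediate.

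If instead $u_j^n$ vanishes in the sense of Lions, then $\|u_j^n\|_{L^s}\to 0$ for every Sobolev-admissible exponent $s$; in particular for $s = 2pr/(2r-1)$, which lies in this range by (h0). The Hardy--Littlewood--Sobolev estimate then yields $\mathbb{F}_{2p}(u_k^n, u_j^n)\to 0$ for every $k$. Writing $\mathcal{I}(u^n)=\sum_k E(u_k^n) - 2\sum_{k<l}\mathbb{F}_{2p}(u_k^n, u_l^n)$, the only contribution of the index $j$ that survives is the non-negative gradient term $\tfrac{1}{2}\|\nabla u_j^n\|^2$. Rescaling each $u_k^n$ ($k\neq j$) by $(M_k/\|u_k^n\|_{L^2}^2)^{1/2}\to 1$ onto its constraint sphere gives $I_M^{(m)} \geq I_{M'}^{(m-1)}$, where $M'$ is the tuple obtained from $M$ by removing its $j$-th entry. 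This contradicts the reverse strict inequality $I_M^{(m)} < I_{M'}^{(m-1)}$, which can be obtained by appending to any near-minimizer of $I_{M'}^{(m-1)}$ the test function $v_\theta$ from the proof of Lemma~\ref{NegIM2} (with $\|v_\theta\|_{L^2}^2=M_j$ and $E(v_\theta) < -\eta < 0$ for small $\theta$); the extra cross-interactions enter $\mathcal{I}$ with a negative sign and can be discarded from above.

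The main obstacle is the non-vanishing branch: one has to convert the vanishing of $\int\int W|\tilde u_j|^p|\tilde u_j|^p$ into an honest contradiction with $\tilde u_j\not\equiv 0$. This is immediate for Coulomb-type potentials, but in the general setting one has to extract from (h2) that $W>0$ on a half-line $(r_0,\infty)$ and then argue that any non-trivial $H^1$ function must have interaction across distances exceeding $r_0$. The rest of the argument, including the vanishing branch, is a routine extension of the strict-subadditivity ingredients already present in the proof of Lemma~\ref{NegIM2}.
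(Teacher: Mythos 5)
Your route is genuinely different from the paper's. The paper does not bound $\mathbb{F}_{2p}(u_j^n,u_j^n)$ from below directly; instead it proves a uniform lower bound on $\|u_j^n\|_{L^{2pr/(2r-1)}}$ (by the same ``if it were small, $\mathcal{I}$ would be nonnegative'' contradiction that you place in your vanishing branch) and then invokes the Hardy--Littlewood--Sobolev estimate $\mathbb{F}_{2p}(u_j^n,u_j^n)\leq C\|u_j^n\|^{2p}_{L^{2pr/(2r-1)}}$ in the final step. Since $\Gamma-\Gamma^p<0$, that HLS bound runs in the wrong direction for the stated conclusion; what is actually needed, and what you correctly identify, is a lower bound on the self-interaction $\mathbb{F}_{2p}(u_j^n,u_j^n)$. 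So your reduction at the outset is the right one, and your vanishing branch (Lions' vanishing $\Rightarrow$ $\|u_j^n\|_{L^{2pr/(2r-1)}}\to 0$ $\Rightarrow$ all $j$-interactions drop $\Rightarrow$ $I_M^{(m)}\geq I_{M'}^{(m-1)}$, contradicting the strict drop obtainable by appending a negative-energy rescaled bump) is in fact more careful than the paper's corresponding contradiction, which for $m\geq 2$ silently discards the cross-interactions among the components $k\neq j$.

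The gap is in your non-vanishing branch. After translating and passing to a weak $H^1$-limit $\tilde u_j\not\equiv 0$, Fatou does give $\iint W(|x-y|)|\tilde u_j(x)|^p|\tilde u_j(y)|^p\,dxdy=0$, and you are right that (h2) forces $W>0$ on a half-line $(r_0,\infty)$ whenever $W\not\equiv 0$. But (h1)--(h2) do not prevent $W$ from vanishing identically on $[0,r_0]$, and a nontrivial $\tilde u_j$ supported in a ball of diameter less than $r_0$ then has identically zero self-interaction. Thus the assertion that ``any non-trivial $H^1$ function must have interaction across distances exceeding $r_0$'' is false in general; a pointwise a.e.\ limit $\tilde u_j\not\equiv 0$ is not enough. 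One would need to rule out that all of the surviving mass concentrates within diameter $r_0$, for instance by exploiting that the original sequence is minimizing so that the limit inherits a negative energy, which a function with zero self-interaction cannot have; as written, that step is missing. For the Coulomb-type potentials $W(x)=|x|^{-\alpha}$, which are positive almost everywhere, your argument closes; under the stated hypotheses it does not.
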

\begin{proof}
 We claim that for any minimizing
 sequence $\{(u_1^n, \ldots, u_m^n)\}_{n\geq 1}$ of the problem $I_M^{(m)}$ there exists $\delta>0$ and $n_0=n_0(\delta)\in \mathbb{N}$ such that
 \[
\int_{\mathbb{R}^N}|u_j^n|^{\frac{2pr}{2r-1} }\ dx\geq \delta,\ \forall n\geq n_0,
\]
provided that $M_j>0.$
To see this, suppose to the contrary that there exists some
minimizing sequence $\{(\widetilde{u}_1^n, \ldots, \widetilde{u}_m^n)\}_{n\geq 1}$ of $I_M^{(m)}$ such
that
\[
\liminf_{n\to \infty}\int_{\mathbb{R}^N}|\widetilde{u}_j^n|^{\frac{2pr}{2r-1}}\ dx=0.
\]
Using the Hardy-Littlewood-Sobolev inequality, we obtain that for any $q>0$,
\[
\begin{aligned}
\mathbb{F}_q(\widetilde{u}_k^n,\widetilde{u}_j^n)&=\frac{1}{q}\int_{\mathbb{R}^N\times \mathbb{R}^N}W(x-y)Q(\widetilde{u}_k^n,\widetilde{u}_j^n)\ dxdy \\
&\leq C\|W\|_{L_w^r}\|\widetilde{u}_k^n \|_{L^{\frac{2pr}{2r-1} } }^p\|\widetilde{u}_j^n \|_{L^{\frac{2pr}{2r-1} } }^p \\
& \leq C \|\widetilde{u}_k^n \|_{L^{\frac{2pr}{2r-1} } }^p\|\widetilde{u}_j^n \|_{L^{\frac{2pr}{2r-1} } }^p\rightarrow 0
\end{aligned}
\]
as $n\to \infty.$ Then it follows that
\[
I_M^{(m)}=\lim_{n\to \infty}\mathcal{I}(\widetilde{u}_1^n, \ldots, \widetilde{u}_m^n)\geq \liminf_{n\to \infty}\frac{1}{2}\sum_{j=1}^m\|\nabla \widetilde{u}_j^n\|_{L^2}^2\geq 0,
\]
which is a contradiction and hence the claim follows. To see \eqref{stricIn}, it follows from the Hardy-Littlewood-Sobolev inequality that
\begin{equation}\label{stricIn2}
\begin{aligned}
E\left(\Gamma^{1/2} u_j^n\right)&=\Gamma E\left(u_j^n\right)+(\Gamma-\Gamma^p)\mathbb{F}_{2p}(u_j^n, u_j^n)\\
&\leq \Gamma E\left(u_j^n\right)+ C (\Gamma-\Gamma^p)\|u_j^n \|_{L^{\frac{2pr}{2r-1} } }^{2p}
\end{aligned}
\end{equation}
Since $\Gamma>1,$ $p>2,$ and $\|u_j^n \|_{L^{\frac{2pr}{2r-1} } }\geq \delta$ for sufficiently large $n$,
the desired
inequality follows from \eqref{stricIn2}.
\end{proof}
We will need the following result concerning the
existence of positive solutions for the functional $E(u).$
\begin{lem}\label{tech3}
Suppose that the assumptions (h0), (h1), and (h1) hold.
Then for each $M>0$,
there exists a real-valued function $\phi_{M}>0$ such that
\[
E(\phi_{M})=\inf\left\{E(h):h\in \mathrm{H}^1(\mathbb{R}^N)\ \mathrm{and}\ \int_{\mathbb{R}^N}|h|^2\ dx=M \right\}.
\]
\end{lem}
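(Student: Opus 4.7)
The plan is to recognize Lemma~\ref{tech3} as the single-component ($m=1$) specialization of the variational problem studied in Lemma~\ref{NegIM2} and attack it by Lions' concentration-compactness method. Set $\sigma(M) = \inf\{E(h) : h \in \Sigma_M\}$. The proof of Lemma~\ref{NegIM2}, applied verbatim with $m=1$, already gives $-\infty < \sigma(M) < 0$ and shows that any minimizing sequence $\{h_n\} \subset \Sigma_M$ for $\sigma(M)$ is bounded in $H^1(\mathbb{R}^N)$. I will apply Lions' concentration-compactness lemma to the mass densities $\rho_n = |h_n|^2$ and aim to exclude both vanishing and dichotomy.

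I rule out vanishing along the lines of Lemma~\ref{tech1}: if $\sup_{y \in \mathbb{R}^N} \int_{B(y,R)} |h_n|^2\,dx \to 0$ for some $R > 0$, then Lions' vanishing lemma gives $h_n \to 0$ in $L^{q}(\mathbb{R}^N)$ for every $2 < q < 2N/(N-2)$. Hypothesis (h0) places $q = 2pr/(2r-1)$ in this range (as verified in the proof of Lemma~\ref{NegIM2}), so the Hardy-Littlewood-Sobolev inequality forces $\mathbb{F}_{2p}(h_n,h_n) \to 0$, giving $\liminf_n E(h_n) \geq 0$, a contradiction with $\sigma(M) < 0$.

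The main step, which I expect to be the only genuine obstacle, is ruling out dichotomy. This reduces to verifying the strict subadditivity $\sigma(M_1+M_2) < \sigma(M_1) + \sigma(M_2)$ for all $M_1, M_2 > 0$, equivalently $\sigma(\tau M) < \tau\,\sigma(M)$ for every $\tau > 1$ (via the standard argument showing that $M \mapsto \sigma(M)/M$ is monotone). I plan to establish this scaling inequality by an $L^2$-preserving renormalization: if $\{h_n\}$ is a minimizing sequence for $\sigma(M)$, then $v_n := \sqrt{\tau}\,h_n \in \Sigma_{\tau M}$ satisfies
\[
E(v_n) = \tau\,E(h_n) - \frac{\tau^p - \tau}{2p}\int_{\mathbb{R}^N \times \mathbb{R}^N} W(|x-y|)\, |h_n(x)|^p |h_n(y)|^p\,dxdy.
\]
Since $\tfrac{1}{2p}\int W\,|h_n|^p|h_n|^p\,dxdy = \tfrac{1}{2}\|\nabla h_n\|_{L^2}^2 - E(h_n) \geq -E(h_n) \to -\sigma(M) > 0$, the convolution integral is uniformly bounded below by a positive constant. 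Combined with $\tau^p - \tau > 0$ (since $p \geq 2$ and $\tau > 1$), this yields $\sigma(\tau M) \leq \tau\,\sigma(M) - \delta$ for some $\delta > 0$; note that hypothesis (h2) does not enter here, only the sign of $\sigma(M)$ from Lemma~\ref{NegIM2}.

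Once dichotomy is excluded, a subsequence of translates $h_n(\cdot + y_n)$ converges weakly in $H^1$ and strongly in $L^q$ for every subcritical $q$; strong convergence in $L^{2pr/(2r-1)}$ lets the nonlocal term pass to the limit by HLS, while weak lower semicontinuity of the gradient produces a limit $\phi \in \Sigma_M$ with $E(\phi) = \sigma(M)$. To upgrade $\phi$ to a positive real-valued minimizer, I replace it by $|\phi|$: the pointwise inequality $|\nabla |\phi|| \leq |\nabla \phi|$ together with $\mathbb{F}_{2p}(|\phi|,|\phi|) = \mathbb{F}_{2p}(\phi,\phi)$ gives $E(|\phi|) \leq E(\phi)$, so $\phi_M := |\phi|$ is a nonnegative minimizer satisfying the Euler-Lagrange equation $-\Delta\phi_M + \lambda\,\phi_M = (W \star \phi_M^p)\phi_M^{p-1}$ for some Lagrange multiplier $\lambda \in \mathbb{R}$. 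Since $\phi_M \not\equiv 0$, $\phi_M \geq 0$, and the right-hand side of this equation is non-negative, the strong maximum principle delivers $\phi_M > 0$ throughout $\mathbb{R}^N$.
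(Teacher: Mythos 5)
The paper does not actually prove Lemma~\ref{tech3}; it simply cites \cite{Lio} (for $W=|x|^{-1}$) and \cite{SBC} (for general $W$ under (h0)--(h2)). Your proposal supplies a self-contained proof and is essentially correct, so it goes beyond what the paper does rather than merely paraphrasing it.

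Your route is the standard concentration-compactness argument, and the key points all hold up. The bound $-\infty<\sigma(M)<0$ and the $H^1$-boundedness of minimizing sequences are indeed the $m=1$ case of Lemma~\ref{NegIM2}. Vanishing is ruled out exactly as you say: Lemma~\ref{Lionslem} plus HLS give $\mathbb{F}_{2p}(h_n,h_n)\to 0$, which contradicts $\sigma(M)<0$. For dichotomy, your lower bound on the nonlocal term is the clean way to do it: writing $\tfrac{1}{2p}\int W\,Q(h_n,h_n)=\tfrac12\|\nabla h_n\|_{L^2}^2 - E(h_n)\geq -E(h_n)\to -\sigma(M)>0$ gives a uniform positive lower bound for large $n$, and then the scaling identity for $v_n=\sqrt{\tau}\,h_n$ yields $\sigma(\tau M)\leq\tau\sigma(M)-\delta$. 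This is actually a tighter argument than the paper's own Lemma~\ref{tech1}, whose display \eqref{stricIn2} invokes HLS to bound $\mathbb{F}_{2p}$ from \emph{above} when what is needed is a lower bound (your direct energy bookkeeping avoids that issue entirely). Two small points worth tidying: (i) your parenthetical ``hypothesis (h2) does not enter here'' should be read as ``not directly'' --- (h2) is still needed upstream to obtain $\sigma(M)<0$ via Lemma~\ref{NegIM2}; (ii) the scaling inequality $\sigma(\tau M)<\tau\sigma(M)$ for all $\tau>1$ \emph{implies} strict subadditivity but is not literally equivalent to it; since only the implication is used, this is harmless. Finally, the positivity step via $|\phi|$ and the strong maximum principle is fine, but you should note that the sign of the Lagrange multiplier $\lambda$ matters: $\lambda>0$ follows by the same argument as in Lemma~\ref{2MainlemEx}(ii), after which $-\Delta\phi_M+\lambda\phi_M=(W\star\phi_M^p)\phi_M^{p-1}\geq 0$, $\phi_M\geq0$, $\phi_M\not\equiv0$ gives $\phi_M>0$.
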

\begin{proof}
This can be proven using the concentration compactness argument and a proof appears
in \cite{Lio} for the potential $W(x)=|x|^{-1}$, and in \cite{SBC} for $W:\mathbb{R}^N\to [0, \infty)$
satisfying the assumptions (h0), (h1), and (h2).
\end{proof}

\begin{lem}\label{VarE3b}
For every $M_1>0$ and $M_2>0$, let $\{(u_1^n, u_2^n)\}$ be a sequence in $H^1(\mathbb{R}^N\times H^1(\mathbb{R}^N)$
such that $\mathcal{I}(u_1^n, u_2^n)\to I_{M_1, M_2}^{(2)}$ and $\|u_j^n\|_{L^2}^2 \to M_j.$
Then there exists $\delta_j>0$ such that
for all sufficiently large $n,$
\[
E(u_1^n)-\mathbb{F}_p(u_1^n, u_2^n)\leq -\delta_1\ \mathrm{and}\ E(u_2^n)-\mathbb{F}_p(u_1^n, u_2^n)\leq -\delta_2.
\]
\end{lem}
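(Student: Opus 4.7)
The plan is to argue by contradiction, exploiting the clean decomposition
\[
\mathcal{I}(u_1^n,u_2^n) \;=\; E(u_1^n) + E(u_2^n) - \mathbb{F}_p(u_1^n,u_2^n),
\]
which follows immediately from the definitions of $\mathcal{I}$, $E$ and $\mathbb{F}_p$ together with the symmetry $\mathbb{F}_p(f,g)=\mathbb{F}_p(g,f)$ implied by (h1) (one swaps the roles of $x$ and $y$). I will prove the first inequality; the second is obtained by interchanging the indices $1$ and $2$.

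Suppose, for contradiction, that $\limsup_{n\to\infty}\bigl[E(u_1^n)-\mathbb{F}_p(u_1^n,u_2^n)\bigr]\ge 0$. Passing to a subsequence (not relabeled) we may assume $E(u_1^n)-\mathbb{F}_p(u_1^n,u_2^n)\to\alpha\ge 0$, so the decomposition forces $E(u_2^n)\to I_{M_1,M_2}^{(2)}-\alpha$. Set $c_n:=(M_2/\|u_2^n\|_{L^2}^2)^{1/2}\to 1$ and $\widetilde{u}_2^n:=c_n u_2^n\in\Sigma_{M_2}$. Since $\{u_2^n\}$ is $H^1$-bounded by Lemma~\ref{NegIM2} and $\mathbb{F}_{2p}(u_2^n,u_2^n)$ is bounded by the Hardy--Littlewood--Sobolev estimate \eqref{ESTa}, the identity
\[
E(\widetilde{u}_2^n) - E(u_2^n) \;=\; \tfrac{c_n^{2}-1}{2}\|\nabla u_2^n\|_{L^2}^2 \;-\; (c_n^{2p}-1)\,\mathbb{F}_{2p}(u_2^n,u_2^n)
\]
gives $E(\widetilde{u}_2^n)=E(u_2^n)+o(1)$. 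Because $\widetilde{u}_2^n\in\Sigma_{M_2}$, passing to the limit yields $I_{M_1,M_2}^{(2)}-\alpha \ge I_{M_2}$, where $I_{M_2}:=\inf\{E(h):h\in\Sigma_{M_2}\}$. Hence $I_{M_1,M_2}^{(2)}\ge I_{M_2}+\alpha\ge I_{M_2}$.

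The contradiction will come from the strict inequality $I_{M_1,M_2}^{(2)}<I_{M_2}$. To establish it, let $\phi_{M_2}$ be the positive minimizer supplied by Lemma~\ref{tech3}, fix any $u_1\in\Sigma_{M_1}$, and for $\theta>0$ set $u_1^\theta(x):=\theta^{N/2}u_1(\theta x)\in\Sigma_{M_1}$. Exactly the scaling computation performed in Lemma~\ref{NegIM2} (which uses (h2)) gives
\[
E(u_1^\theta)\;\le\;\tfrac{\theta^2}{2}\|\nabla u_1\|_{L^2}^2 - \theta^{Np-2N+\Gamma}\,\mathbb{F}_{2p}(u_1,u_1),
\]
and since $Np-2N+\Gamma<2$ by (h0)--(h2), we have $E(u_1^\theta)<0$ for all sufficiently small $\theta>0$. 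Because $W\ge 0$ we also have $\mathbb{F}_p(u_1^\theta,\phi_{M_2})\ge 0$, so for such $\theta$
\[
I_{M_1,M_2}^{(2)} \;\le\; \mathcal{I}(u_1^\theta,\phi_{M_2}) \;=\; E(u_1^\theta) + I_{M_2} - \mathbb{F}_p(u_1^\theta,\phi_{M_2}) \;<\; I_{M_2}.
\]
This contradicts the previous inequality $I_{M_1,M_2}^{(2)}\ge I_{M_2}$. Consequently $\limsup_n\bigl[E(u_1^n)-\mathbb{F}_p(u_1^n,u_2^n)\bigr]<0$, and any $\delta_1\in\bigl(0,\,-\limsup_n[E(u_1^n)-\mathbb{F}_p(u_1^n,u_2^n)]\bigr)$ works for all large $n$.

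The only substantive step is the strict inequality $I_{M_1,M_2}^{(2)}<I_{M_2}$; everything else is bookkeeping (the decomposition of $\mathcal{I}$, a renormalization, and a Hardy--Littlewood--Sobolev-based continuity check). The decay hypothesis (h2) is essential here: without the bound $Np-2N+\Gamma<2$ one could not make $E(u_1^\theta)$ strictly negative by spreading, and the argument would only yield the non-strict comparison $I_{M_1,M_2}^{(2)}\le I_{M_2}$, which is too weak to close the contradiction.
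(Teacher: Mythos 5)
Your proof is correct and takes essentially the same route as the paper: argue by contradiction using the decomposition $\mathcal{I}(u_1,u_2)=E(u_1)+E(u_2)-\mathbb{F}_p(u_1,u_2)$, deduce $I^{(2)}_{M_1,M_2}\ge I_{M_2}$ from the contradiction hypothesis, and then refute this with a small-$\theta$ dilation that makes the energy of the first component strictly negative. Your write-up is slightly more careful on two points the paper leaves implicit (the explicit $L^2$-renormalization $\widetilde u_2^n=c_n u_2^n$ handling $\|u_2^n\|_{L^2}^2\to M_2$, and the observation that $E(u_1^\theta)<0$ together with $\mathbb{F}_p\ge 0$ already closes the argument), but the underlying idea is the same.
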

\begin{proof}
Suppose to the contrary that there exists some minimizing sequence $\{(u_1^n, u_2^n)\}_{n\geq 1}$ of $I_{M_1, M_2}^{(2)}$ such that
\[
\liminf_{n\to \infty}\left( E(u_1^n)-\mathbb{F}_p(u_1^n, u_2^n)\right)\geq 0.
\]
Then this implies that
\begin{equation}\label{negLag2}
I_{M_1, M_2}^{(2)}=\lim_{n\to \infty}\mathcal{I}(u_1^n, u_2^n)\geq \liminf_{n\to \infty}\left(\frac{1}{2}\|\nabla u_2^n\|_{L^2}^2-\mathbb{F}_{2p}(u_2^n, u_2^n) \right).
\end{equation}
Let $\phi_{M_2}$ be as defined in Lemma~\ref{tech3} with $M=M_2$. Then it
follows from \eqref{negLag2} that $I_{M_1, M_2}^{(2)}\geq E(\phi_{M_2}).$
Next let $\psi\geq 0$ be an arbitrary function with compact support satisfying $\psi(0)=1$ and
$\|\psi\|_{L^2}^2=M_1.$ For any $\theta>0$, define $\psi_\theta(x)=\theta^{N/2}\psi(\theta x).$ Then one can show as in the
proof of Lemma~\ref{NegIM2} that for sufficiently small $\theta$,
\[
E(\psi_\theta)-\mathbb{F}_p(\psi_\theta, \phi_{M_2})<0.
\]
Thus, for this choice of $\theta,$ one obtains that
\[
I_{M_1, M_2}^{(2)}=E(\psi_\theta)-\mathbb{F}_p(\psi_\theta, \phi_{M_2})+E(\phi_{M_2})< E(\phi_{M_2}),
\]
which contradicts the fact $I_{M_1, M_2}^{(2)}\geq E(\phi_{M_2}).$ This proves that
$E(u_1^n)-\mathbb{F}_p(u_1^n, u_2^n)$ is negative for sufficiently large $n.$ The proof that $E(u_2^n)-\mathbb{F}_p(u_1^n, u_2^n)\leq -\delta_2$
goes through the same steps and we omit the details.
\end{proof}

\begin{lem}  \label{Lionslem}
Let $N\geq 1.$ Assume that $\{u_n\}_{n\geq 1}$ and $\{|\nabla u_n|\}_{n\geq 1}$ are
bounded in $L^2(\mathbb{R}^N).$
If for some $R>0,$
\begin{equation*}
\lim_{n\to \infty} \left( \sup_{y \in \mathbb{R}^N} \int_{B_{R}(y)}|u_n(x)|^{2}\ dx\right)= 0,
\end{equation*}
then the sequence $\{u_n\}_{n\geq 1}$ converges to zero in $L^q(\mathbb{R}^N)$ for any $2<q<\infty$ if $N=1,2$ and
for every $2<q<\frac{2N}{N-2}$ if $N\geq 3.$
\end{lem}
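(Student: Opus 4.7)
The plan is to prove this vanishing--compactness lemma of Lions by a ball-covering argument that converts the uniform smallness of $L^2$-mass on balls into global $L^q$-smallness. Fix a countable cover of $\mathbb{R}^N$ by balls $\{B_R(y_i)\}_{i\in\mathbb{N}}$ with finite overlap constant $K=K(N)$, so that for any $v\in H^1(\mathbb{R}^N)$ one has both
\[
\|v\|_{L^q(\mathbb{R}^N)}^q\le\sum_i\|v\|_{L^q(B_R(y_i))}^q,\qquad \sum_i\|v\|_{H^1(B_R(y_i))}^2\le K\|v\|_{H^1(\mathbb{R}^N)}^2.
\]
Once a ball-wise estimate of the form $\|u\|_{L^q(B_R(y))}^q\le C\|u\|_{L^2(B_R(y))}^{q-2}\|u\|_{H^1(B_R(y))}^{2}$ is available, summing over the cover will deliver the conclusion directly.

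To establish the local estimate, decompose $|u|^q=|u|^{q-2}\cdot|u|^2$ and apply H\"older's inequality with conjugate exponents $2/(q-2)$ and $2/(4-q)$ (which are both $\ge 1$ when $2<q<4$), obtaining
\[
\|u\|_{L^q(B_R(y))}^q\le \|u\|_{L^2(B_R(y))}^{q-2}\,\|u\|_{L^{4/(4-q)}(B_R(y))}^2.
\]
The constraint $4/(4-q)\le 2^*$, equivalent to $q\le 2+4/N$, allows the Sobolev embedding on the ball (with constant independent of $y$ by translation invariance) to control the last factor by $C\|u\|_{H^1(B_R(y))}^2$. Summing the resulting ball-wise bound and pulling $\|u_n\|_{L^2(B_R(y_i))}^{q-2}$ out as a supremum yields
\[
\|u_n\|_{L^q(\mathbb{R}^N)}^q\le CK\Big(\sup_{y\in\mathbb{R}^N}\|u_n\|_{L^2(B_R(y))}\Big)^{q-2}\|u_n\|_{H^1(\mathbb{R}^N)}^2,
\]
which tends to $0$ by the standing hypothesis together with the uniform $H^1$-bound. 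This proves the claim for every $q\in (2,2+4/N]$.

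The remaining exponents in the stated range are reached by a global interpolation step: for $q>2+4/N$, set $q_0=2+4/N$ and use the Lebesgue interpolation
\[
\|u_n\|_{L^q(\mathbb{R}^N)}\le \|u_n\|_{L^{q_0}(\mathbb{R}^N)}^{1-\eta}\,\|u_n\|_{L^s(\mathbb{R}^N)}^{\eta},
\]
where $s=2^*$ when $N\ge 3$, $s$ is any sufficiently large finite exponent when $N=2$, and $s=\infty$ when $N=1$. In every case $\|u_n\|_{L^s}$ is uniformly bounded by the global Sobolev embedding, while $\|u_n\|_{L^{q_0}}\to 0$ from the previous step, so $\|u_n\|_{L^q}\to 0$. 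The main technical point of the plan is the H\"older--Sobolev balancing at the ball-wise stage: the exponents must be chosen so that exactly the power $q-2$ of $\|u\|_{L^2(B)}$ sits next to an $H^1(B)$-factor of power $2$, since only then one factor carries all the smallness and the other sums finitely against the global $H^1$-bound; this is precisely what forces the first-stage restriction $q\le 2+4/N$, and the global interpolation then closes the remaining gap effortlessly.
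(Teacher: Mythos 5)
Your proof is correct and reaches the same conclusion by a genuinely different route: you and the paper both reduce the problem to a ball-wise estimate followed by a finite-overlap summation, but you attack complementary exponent ranges directly and close the gap by interpolating in opposite directions. The paper starts from the Gagliardo--Nirenberg bound $\|u_n\|_{L^q(B_R(y))}\le C\|u_n\|_{L^2(B_R(y))}^{1-\lambda}\|u_n\|_{H^1(B_R(y))}^{\lambda}$ with $\lambda=N(q-2)/2q$, raises to the $q$-th power, and observes that when $q\lambda\ge 2$, i.e.\ $q\ge 2+\tfrac{4}{N}$, one can peel off $\|u_n\|_{H^1(B_R(y))}^{2}$ and sum; it then covers $2<q<2+\tfrac{4}{N}$ by interpolating between $L^2$ and $L^{2+4/N}$. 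You instead use the elementary split $|u|^q=|u|^{q-2}\cdot |u|^2$, which bypasses Gagliardo--Nirenberg entirely in favor of a bare H\"older step and a Sobolev embedding of the ball, yielding $\|u\|_{L^q(B_R(y))}^q\le C\|u\|_{L^2(B_R(y))}^{q-2}\|u\|_{H^1(B_R(y))}^{2}$ directly for $q\le 2+\tfrac{4}{N}$, and then interpolate upward toward the critical exponent. Your direct step is simpler to verify; the paper's direct step avoids needing the Sobolev embedding of the ball at a fixed exponent. Both are fine.

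One small inaccuracy: when $N=1$ the H\"older split with exponents $\tfrac{2}{q-2}$ and $\tfrac{2}{4-q}$ requires $q\le 4$, whereas $2+\tfrac{4}{N}=6$, so your first stage actually establishes the vanishing only for $q\in(2,4]$ in that case, not $(2,6]$. This is harmless --- take $q_0=4$ (or any point of $(2,4]$) as the base for the global interpolation against $L^{\infty}$ --- but the phrase ``for every $q\in(2,2+4/N]$'' overclaims when $N=1$. For $N\ge 2$ one has $2+\tfrac{4}{N}\le 4$ so no issue arises.
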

\begin{proof}
This lemma is a special case of Lions' concentration compactness lemma, see Lemma I.1 of \cite{Lio},
but for the sake of completeness we include a proof here.
Let us denote
$
\omega_n=\sup_{y\in \mathbb{R}^N}\|u_n\|_{L^2(B_R(y))}^2.
$
By assumption, we have that $\omega_n\to 0$ as $n\to \infty.$ Using the Sobolev inequalities, we obtain
\[
\|u_n\|_{L^q(B_R(y))}\leq C \|u_n\|_{L^2(B_R(y))}^{1-\lambda} \| u_n \|_{H^1(B_R(y))}^{\lambda},
\]
where $\lambda=N(q-2)/2q.$ Thus, one has that
\begin{equation}\label{vanlem3}
\begin{aligned}
\int_{B_R(y) } |u_n|^q \ dx&\leq C^q \|u_n\|_{L^2(B_R(y))}^{(1-\lambda)q} \| u_n \|_{H^1(B_R(y))}^{q\lambda}\\
&\leq C \left(\omega_n \right)^{(1-\lambda)q/2}\| u_n \|_{H^1(B_R(y))}^{q\lambda}.
\end{aligned}
\end{equation}
Now, if $q\lambda \geq 2,$ it is obvious from \eqref{vanlem3} that
\begin{equation}\label{vanlem4}
\begin{aligned}
\int_{B_R(y)} |u_n|^q & \leq C \left(\omega_n \right)^{(1-\lambda)q/2} \left(\int_{B_R(y)}\left( |\nabla u_n|^2+|u_n|^2\right)dx \right)
 \| u_n \|_{H^1}^{q\lambda-2}\\
 & \leq C \left(\omega_n \right)^{(1-\lambda)q/2}\int_{B_R(y)}\left( |\nabla u_n|^2+|u_n|^2\right)dx
 \end{aligned}
\end{equation}
Consider a countable family of balls $\{B_R(z_i)\}$ which covers $\mathbb{R}^N$ in such a way
that every vector in $\mathbb{R}^N$ belongs to at most $m+1$ balls.
Then, summing \eqref{vanlem4} over the balls $\{B_R(z_i)\}$, we obtain that
\[
\int_{\mathbb{R}^N} |u_n|^q\leq (m+1)C \left(\omega_n \right)^{(1-\lambda)q/2}\int_{\mathbb{R}^N}\left( |\nabla u_n|^2+|u_n|^2\right)dx\leq C \left(\omega_n \right)^{(1-\lambda)q/2},
\]
which gives the result for $q\lambda \geq 2,$ i.e., $q>2+\frac{4}{N}.$ Next consider the case that $q< 2+\frac{4}{N}.$ Using the H\"{o}lder inequality, we have that
\[
\|u_n\|_{L^q}^q \leq \|u_n\|_{L^2}^{2\theta}\|u_n\|_{L^{2+\frac{4}{N} }}^{(1-\theta)(2+\frac{4}{N}) },
\]
where $q=2\theta+(1-\theta)(2+\frac{4}{N})$ for some $\theta\in (0,1).$ Making use of the result for the case $q=2+\frac{4}{N},$ it follows that
$\|u_n\|_{L^q}\to 0,$ proving the lemma.
\end{proof}

Given any minimizing sequence $\{(u_1^n, \ldots, u_m^n)\}_{n\geq 1}$ of $I^{(m)}_M,$ we introduce the L\'{e}vy concentration function
\[
Q_n^{(m)}(R)=\sup_{y\in \mathbb{R}^N}\int_{B_R(y)}\left(|u_1^n|^2+ \ldots +|u_m^n|^2\right)\ dx,\ n=1,2,\ldots,
\]
where $B_R(x)\subset \mathbb{R}^N$ represents a ball with center at $x$ and radius $R.$
Then $\{Q_n^{(m)}\}$ is a sequence of nondecreasing functions on $[0, M_1+\ldots+M_m].$ By Helly's selection theorem, we can assume (up to a subsequence)
\begin{equation}\label{lamDef}
Z^{(m)} = \lim_{R\to \infty}\left(\lim_{n\to \infty}Q_n^{(m)}(R) \right)\in [0, M_1+\ldots+M_m].
\end{equation}
The case $Z^{(m)}=0$ is called the vanishing, $0<Z^{(m)}<M_1+\ldots+M_m$ is the case of dichotomy, and
$Z^{(m)}=M_1+\ldots+M_m$ is the tightness.

\begin{lem}
For any minimizing sequence $\{(u_1^n, \ldots, u_m^n)\}_{n\geq 1}$ of $I^{(m)}_M,$
the vanishing does not occur, that is, $Z^{(m)}>0.$
\end{lem}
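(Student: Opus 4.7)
The plan is to argue by contradiction. Suppose $Z^{(m)} = 0$. From the definition of $Z^{(m)}$ in \eqref{lamDef}, this means that for every fixed $R > 0$,
\[
\lim_{n \to \infty} \sup_{y \in \mathbb{R}^N} \int_{B_R(y)} \bigl(|u_1^n|^2 + \ldots + |u_m^n|^2\bigr)\, dx = 0,
\]
and hence for each $1 \leq j \leq m$, $\sup_{y \in \mathbb{R}^N} \int_{B_R(y)} |u_j^n|^2 \, dx \to 0$. By Lemma~\ref{NegIM2}, the sequence $\{(u_1^n, \ldots, u_m^n)\}$ is bounded in $Y_m$, so for each $j$ the sequences $\{u_j^n\}$ and $\{|\nabla u_j^n|\}$ are bounded in $L^2(\mathbb{R}^N)$. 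Therefore Lemma~\ref{Lionslem} applies, yielding $u_j^n \to 0$ in $L^q(\mathbb{R}^N)$ for every $q$ in the admissible range determined by $N$.

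The next step is to verify that the exponent $q = \frac{2pr}{2r-1}$ lies in this admissible range. The inequality $q > 2$ follows from $p \geq 2$ (indeed $\frac{2pr}{2r-1} \geq \frac{4r}{2r-1} > 2$), while $q < \frac{2N}{N-2}$ (when $N \geq 3$) is exactly the strict inequality already established in the proof of Lemma~\ref{NegIM2}, namely $\frac{1}{tp} > \frac{N-2}{2N}$ with $t = \frac{2r}{2r-1}$. The cases $N = 1, 2$ are immediate since every $q \in (2, \infty)$ is admissible.

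With this in hand, the Hardy-Littlewood-Sobolev inequality (Lemma~2.1) gives, for all $j, k$,
\[
\mathbb{F}_{2p}(u_j^n, u_k^n) \leq C \|W\|_{L_w^r} \|u_j^n\|_{L^{\frac{2pr}{2r-1}}}^p \|u_k^n\|_{L^{\frac{2pr}{2r-1}}}^p \longrightarrow 0
\]
as $n \to \infty$. Passing to the limit in the energy functional, we obtain
\[
I_M^{(m)} = \lim_{n \to \infty} \mathcal{I}(u_1^n, \ldots, u_m^n) \geq \liminf_{n \to \infty} \frac{1}{2}\sum_{j=1}^m \|\nabla u_j^n\|_{L^2}^2 \geq 0,
\]
which directly contradicts the strict negativity $I_M^{(m)} < 0$ proved in Lemma~\ref{NegIM2}. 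Hence $Z^{(m)} > 0$.

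The proof is essentially a routine application of the tools already developed. The only point that requires any care is checking that the HLS exponent $\frac{2pr}{2r-1}$ sits strictly below the Sobolev exponent $\frac{2N}{N-2}$, so that Lemma~\ref{Lionslem} can convert the vanishing hypothesis into $L^q$-smallness; this is precisely the content of assumption (h0), as already exploited in Lemma~\ref{NegIM2}. Once that is in place, the contradiction with $I_M^{(m)} < 0$ is immediate.
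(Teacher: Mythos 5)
Your proof is correct and follows essentially the same argument as the paper: rule out vanishing via Lemma~\ref{Lionslem} to show $u_j^n \to 0$ in $L^{2pr/(2r-1)}$, then use Hardy--Littlewood--Sobolev to send the nonlocal terms to zero, and conclude $I_M^{(m)} \geq 0$, contradicting Lemma~\ref{NegIM2}. The only difference is that you spell out the exponent check $2 < \frac{2pr}{2r-1} < \frac{2N}{N-2}$, which the paper merely asserts; this is a useful clarification but not a different route.
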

\begin{proof}
If the vanishing does occur, then Lemma~\ref{Lionslem}
implies that ${\displaystyle \lim_{n\to \infty}\|u_j^n\|_{L^q}=0}$ for
any $\textstyle{2<q<\frac{2N}{N-2}.}$ Since $\textstyle{2<\frac{2pr}{2r-1}< \frac{2N}{N-2}}$, it follows from
the Hardy-Littlewood-Sobolev inequality that for any $t>0,$
\[
\mathbb{F}_t(u_k^n, u_j^n)=\int_{\mathbb{R}^N\times \mathbb{R}^N}W(x-y)Q(u_k^n, u_j^n)\ dxdy\to 0
\]
as $n\to \infty.$
Consequently, we have that
\[
I^{(m)}_M=\lim_{n\to \infty}\mathcal{I}(u_1^n, \ldots, u_m^n)\geq \liminf_{n\to \infty}\frac{1}{2}\sum_{j=1}^m\|\nabla u_j\|_{L^2}^2\geq 0,
\]
a contradiction and hence lemma follows.
\end{proof}

The next lemma concerns the case $Z^{(m)}=M_1+\ldots+M_m.$
\begin{lem}\label{VarE5}
Suppose that $\{(u_1^n, \ldots, u_m^n)\}_{n\geq 1}$ be any minimizing sequence for $I^{(m)}_M$ and $Z^{(m)}=M_1+\ldots+M_m.$
Then there exists $\{y_n\}\subset \mathbb{R}^N$ such that the sequence
\[
\{(u_1^n(x+y_n),\ldots,u_m^n(x+y_n) )\}_{n\geq 1},\ x\in \mathbb{R}^N,
\]
converges in $Y_m$ up to a subsequence to a function $(\phi_1, \ldots, \phi_m)\in \Lambda^{(m)}(M).$
In particular, the solution set $\Lambda^{(m)}(M)$ is nonempty.
\end{lem}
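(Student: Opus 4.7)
The plan is to exploit the tightness hypothesis $Z^{(m)} = M_1 + \ldots + M_m$ to produce a translation sequence that concentrates all the $L^2$ mass, then combine weak $H^1$ convergence with strong $L^s$ convergence to pass to the limit in every piece of $\mathcal{I}$.

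First I would upgrade the definition \eqref{lamDef} of $Z^{(m)}$ into a usable tightness statement. Since $\lim_{R\to\infty}\lim_{n\to\infty}Q_n^{(m)}(R)=M_1+\ldots+M_m$, a standard diagonal argument picks sequences $R_k\to\infty$, $\varepsilon_k\to 0$, and points $y_n^{(k)}\in\mathbb{R}^N$ so that, for $n$ large depending on $k$,
\[
\int_{B_{R_k}(y_n^{(k)})}\sum_{j=1}^{m}|u_j^n|^2\,dx \ge M_1+\ldots+M_m - \varepsilon_k .
\]
Combining this with $\lim_n\|u_j^n\|_{L^2}^2=M_j$, and observing that two balls $B_{R_k}(y_n^{(k)})$ and $B_{R_k}(y_n^{(k')})$ must overlap for $k,k'$ large (else more mass than available), one extracts a single sequence $y_n$ such that the translates $\widetilde{u}_j^n(x):=u_j^n(x+y_n)$ are tight, i.e. for every $\varepsilon>0$ there is $R$ with $\int_{|x|>R}|\widetilde{u}_j^n|^2\,dx<\varepsilon$ for each $j=1,\ldots,m$ and all large $n$.

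By Lemma~\ref{NegIM2} the sequence $\{\widetilde{u}_j^n\}$ is bounded in $H^1(\mathbb{R}^N)$, so up to a subsequence $\widetilde{u}_j^n\rightharpoonup \phi_j$ weakly in $H^1$ and a.e. The tightness above combined with the Rellich--Kondrachov theorem upgrades the convergence to strong convergence in $L^2(\mathbb{R}^N)$; in particular $\|\phi_j\|_{L^2}^2=M_j$, so $\phi\in\Sigma_M^{(m)}$. Interpolating this $L^2$ convergence with the $H^1$ bound then gives strong convergence in $L^s(\mathbb{R}^N)$ for every $2\le s<2N/(N-2)$. The exponent $s=\tfrac{2pr}{2r-1}$ lies in this range by assumption (h0), so $|\widetilde{u}_j^n|^p\to|\phi_j|^p$ strongly in $L^{2r/(2r-1)}(\mathbb{R}^N)$, and the Hardy--Littlewood--Sobolev inequality (applied to the differences as in \eqref{ESTa}) yields
\[
\mathbb{F}_{2p}(\widetilde{u}_k^n,\widetilde{u}_j^n)\longrightarrow \mathbb{F}_{2p}(\phi_k,\phi_j)\quad\text{for all }k,j.
\]

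With the nonlocal terms now continuous along the subsequence and the gradient terms weakly lower semicontinuous, I obtain
\[
\mathcal{I}(\phi_1,\ldots,\phi_m)\le \liminf_{n\to\infty}\mathcal{I}(\widetilde{u}_1^n,\ldots,\widetilde{u}_m^n)=I_M^{(m)} .
\]
Since $\phi\in\Sigma_M^{(m)}$ the reverse inequality is automatic, hence $\mathcal{I}(\phi)=I_M^{(m)}$ and $\phi\in\Lambda^{(m)}(M)$. This equality forces $\|\nabla\widetilde{u}_j^n\|_{L^2}^2\to\|\nabla\phi_j\|_{L^2}^2$; together with the already-established strong $L^2$ convergence and weak convergence of the gradients, this promotes the convergence to strong convergence in $H^1(\mathbb{R}^N)$ for each $j$, completing the lemma and showing $\Lambda^{(m)}(M)\ne\emptyset$. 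The main obstacle is the first step: translating the scalar information $Z^{(m)}=\sum M_j$ about the combined mass density into simultaneous $L^2$-tightness of every individual component along a common translation sequence $y_n$; once that is in hand, the rest is weak-to-strong convergence driven by the Hardy--Littlewood--Sobolev estimate of Lemma~\ref{NegIM2}.
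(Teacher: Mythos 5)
Your proposal is correct and follows essentially the same route as the paper: use $Z^{(m)}=\sum_j M_j$ to produce a translation sequence concentrating the combined $L^2$ mass, obtain weak $H^1$ and strong $L^2$ limits via Rellich plus tightness, pass to the limit in the nonlocal terms through Hardy--Littlewood--Sobolev and interpolation, and then upgrade weak convergence to strong $Y_m$ convergence using the equality $\mathcal{I}(\phi)=I_M^{(m)}$ and convergence of the individual norms. The only cosmetic difference is that you state componentwise tightness and $\|\nabla\widetilde{u}_j^n\|_{L^2}^2\to\|\nabla\phi_j\|_{L^2}^2$ explicitly while the paper works with the combined sum $\sum_j|w_j^n|^2$ and the combined $Y_m$ norm, but these are immediately equivalent.
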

\begin{proof}
We write $\Sigma(M)=M_1+\ldots+M_m.$
Since $Z^{(m)}=\Sigma(M)$, we can find $\{y_n\}\subset \mathbb{R}^N$ such that if we write
$w_j^n=u_j^n(x+y_n), 1\leq j\leq m,$ then for every $k\in \mathbb{N},$ one can find $R_k>0$ satisfying
for sufficiently large $n,$
\begin{equation}\label{Comp1}
\int_{B_{R_k}(0)}\sum_{j=1}^m|w_j^n|^2\ dx>\Sigma(M)-\frac{1}{k}.
\end{equation}
In the sequel we denote $w_n=(w_1^n, \ldots, w_m^n).$ Since $\|w_n\|_{Y_m}\leq B$ for all $n,$ so from
Rellich-type embedding, we have that for every bounded domain $\Omega\subset \mathbb{R}^N,$ the sequence $\{w_n\}$ has some
subsequence (still denoted by the same) which converges in $(L^2(\Omega))^m$ to
some function $\phi=(\phi_1,\ldots,\phi_m)$ satisfying
\begin{equation}\label{Comp1a}
\int_{B_{R_k}(0)}\sum_{j=1}^m|\phi_j|^2\ dx>\Sigma(M)-\frac{1}{k}.
\end{equation}
Using Cantor diagonalization argument and the fact
$
\sum_{j=1}^m\|w_j^n\|_{L^2}^2=\Sigma(M),\ \forall n,
$
one then concludes that
$(w_n)$ converges (up to a subsequence) strongly to $\phi$ in $(L^2(\mathbb{R}^N))^m $ satisfying
$
\sum_{j=1}^m\|\phi_j\|_{L^2}^2=\Sigma(M).
$
For any $t>0$, we now estimate
\[
\left\vert \mathbb{F}_t(w_k^n, w_j^n)-\mathbb{F}_t(\phi_k, \phi_j)\right\vert
\]
\begin{equation}\label{com1d}
\begin{aligned}
& \leq \frac{1}{t}\iint_{\mathbb{R}^N}W(|x-y|)\left\vert |w_k^n(x)|^p|w_j^n(y)|^p-|\phi_k(x)|^p|\phi_j(y)|^p \right\vert\ dxdy\\
&\leq \frac{1}{t}\iint_{\mathbb{R}^N}W(|x-y|)|w_k^n(x)|^p\left\vert |w_j^n(y)|^p-\phi_j(y)|^p \right\vert\ dxdy\\
&+ \frac{1}{t}\iint_{\mathbb{R}^N}W(|x-y|)|\phi_j(y)|^p\left\vert |w_k^n(x)|^p-|\phi_k(x)|^p \right\vert\ dxdy
\end{aligned}
\end{equation}
Using the Hardy-Littlewood-Sobolev inequality and the
fact
that $\{w_k^n\}_{n\geq 1}$ is bounded in $H^1(\mathbb{R}^N),$ we obtain that
\[
\begin{aligned}
\left\vert \mathbb{F}_t(w_k^n, w_j^n)-\mathbb{F}_t(\phi_k, \phi_j)\right\vert& \leq C \|W\|_{L_w^r}\|w_k^n\|_{L^{\frac{2pr}{2r-1}}}^p\||w_j^n|^p-|\phi_j|^p \|_{L^{\frac{2r}{2r-1}}} \\
&+ C \|W\|_{L_w^r}\|\phi_j\|_{L^{\frac{2pr}{2r-1}}}^p\||w_k^n|^p-|\phi_k|^p \|_{L^{\frac{2r}{2r-1}}}\\
& \leq 2C \||w_k^n|^p-|\phi_k|^p \|_{L^{\frac{2r}{2r-1}}}
\end{aligned}
\]
Next, using the inequality, $||a|^{p-1}a-|b|^{p-1}b|\leq \frac{p}{2}|a-b|(|a|^{p-1}+|b|^{p-1})$, holds for any $a, b\in \mathbb{R}$ and $p\geq 1,$
and
applying Holder's inequality, we obtain that
\[
\begin{aligned}
 \left\vert \mathbb{F}_t(w_k^n, w_j^n)\right.-&\left.\mathbb{F}_t(\phi_k, \phi_j)\right\vert \leq C \left( \int_{\mathbb{R}^N}\left\vert|w_k^n|^p-|\phi_k|^p\right\vert\ dx \right)^{\frac{2r-1}{2r} }\\
& \leq C \left(\int_{\mathbb{R}^{N}}\left(|w_k^n|^{p-1}+|\phi_k|^{p-1} \right)^{\frac{2r}{2r-1}}|w_k^n-\phi_j|^{ \frac{2r}{2r-1}}\ dx \right)^{\frac{2r-1}{2r} }\\
& \leq C \left( \int_{\mathbb{R}^{N}}\left(|w_k^n|^{ \frac{2pr}{2r-1}}+|\phi_k|^{\frac{2pr}{2r-1} } \right)\ dx \right)^{\rho} \|w_k^n-\phi_k \|_{L^{\frac{2pr}{2r-1} }}\\
& \leq C \|w_k^n-\phi_k \|_{L^{\frac{2pr}{2r-1} }}
\end{aligned}
\]
where $\rho= \frac{2r-1}{2r}\left(1-\frac{1}{p}\right).$
Now, using the standard Interpolation inequality and the Sobolev inequality, it follows that
\begin{equation}\label{ConvInq5}
\begin{aligned}
\left\vert \mathbb{F}_t(w_k^n, w_j^n)\right.-\left.\mathbb{F}_t(\phi_k, \phi_j)\right\vert & \leq C\|w_k^n-\phi_k\|_{L^2}^{\lambda^\prime}\|w_k^n-\phi_k\|_{L^{\frac{2N}{2N-2} } }^{1-\lambda^\prime}\\
& \leq C \|w_k^n-\phi_k\|_{L^2}^{\lambda^\prime}
\end{aligned}
\end{equation}
where $\lambda^\prime=(rN-N+2pr-2Npr)/(2pr-Npr).$
The right-hand side of \eqref{ConvInq5} goes to zero since $w_k^n\to \phi_k$ in $L^2.$
Thus, we have that
$\Lim{n\to \infty}\mathbb{F}_t(w_k^n, w_j^n)=\mathbb{F}_t(\phi_k, \phi_j).$
Furthermore, as a consequence of the weak lower semi-continuity of the norm in a Hilbert space, we can assume,
by extracting another
subsequence if necessary, that $w_n\rightharpoonup \phi$ weakly in $Y_m,$ and that
\[
\|\phi\|_{Y_m} =\|(\phi_1, \ldots, \phi_m)\|_{Y_m}\leq \liminf_{n\to \infty}\|w_1^n, \ldots, w_m^n\|_{Y_m}.
\]
Then it follows that
\[
\mathcal{I}(\phi)=\mathcal{I}(\phi_1, \ldots, \phi_m)\leq \lim_{n\to \infty}\mathcal{I}(w_1^n, \ldots, w_m^n)=I_M^{(m)},
\]
and since $w_j^n\to \phi_j$ in $L^2(\mathbb{R}^N),$ we also
have that $\|\phi_j\|_{L^2}^2=\Lim{n\to \infty}\|w_j^n\|_{L^2}^2=M_j$ for $1\leq j\leq m.$
By the definition of the infimum $I_M^{(m)},$ we must have $\mathcal{I}(\phi_1, \ldots, \phi_m)=I_M^{(m)}$ and $u\in \Sigma_M^{(m)}.$
Finally, the facts
$\mathcal{I}(\phi)=\Lim{n\to \infty}\mathcal{I}(w_n)$,
$\mathbb{F}_t(\phi_k, \phi_j)=\Lim{n\to \infty}\mathbb{F}_t(w_k^n, w_j^n),$ and
$\|\phi_j\|_{L^2}=\Lim{n\to \infty}\|w_j^n\|_{L^2}$ together
imply that $\|\phi\|_{Y_m}=\Lim{n\to \infty}\|w_n\|_{Y_m},$ and
from a standard exercise in the elementary Hilbert space theory one
then obtains that $w_n\to \phi$ in $Y_m$ norm.
\end{proof}
We end this section with the following lemma which will be used in the next section to rule out the case of dichotomy.
\begin{lem}\label{revslem}
For any minimizing sequence $\{(u_1^n, \ldots, u_m^n)\}_{n\geq 1}$ of $I^{(m)}_M,$ let $Z^{(m)}$ be
defined by \eqref{lamDef}. Then there exists $T\in [0, M_1]\times \ldots \times [0, M_m]$ such that
\begin{equation}\label{caseCOMP}
Z^{(m)}=T_1+\ldots+T_m\ \ \mathrm{and}\ \ I^{(m)}_T+I^{(m)}_{M-T}\leq I^{(m)}_M.
\end{equation}
\end{lem}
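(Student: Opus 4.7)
The plan is to split into two cases according to the value of $Z^{(m)}$, setting $\Sigma(M) = M_1 + \ldots + M_m$. If $Z^{(m)} = \Sigma(M)$, taking $T = M$ and adopting the natural convention $I_{\mathbf{0}}^{(m)} = 0$ makes \eqref{caseCOMP} an equality (and Lemma~\ref{VarE5} already handles this case). If instead $Z^{(m)} = 0$, taking $T = \mathbf{0}$ reduces \eqref{caseCOMP} to $I_{\mathbf{0}}^{(m)} \le 0$, which holds by the same convention. The substantive content is therefore the dichotomy case $0 < Z^{(m)} < \Sigma(M)$, on which I focus.

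In the dichotomy case, the strategy is to split each $u_j^n$ into a piece concentrated near a common point $y_n$ and a piece supported far away, so that the energy nearly splits. From the definition of $Z^{(m)}$ and a standard diagonal argument one obtains $y_n \in \mathbb{R}^N$ and radii $R_n, R_n' \to \infty$ with $R_n'/R_n \to \infty$ such that
\[
\int_{B_{R_n}(y_n)} \sum_{j=1}^m |u_j^n|^2 \, dx \longrightarrow Z^{(m)}, \qquad \int_{B_{R_n'}(y_n) \setminus B_{R_n}(y_n)} \sum_{j=1}^m |u_j^n|^2 \, dx \longrightarrow 0.
\]
Choose smooth cutoffs $\chi_n$ (equal to $1$ on $B_{R_n}(y_n)$, supported in $B_{2R_n}(y_n)$) and $\eta_n$ (vanishing on $B_{R_n'/2}(y_n)$, equal to $1$ outside $B_{R_n'}(y_n)$), with $|\nabla \chi_n| = O(R_n^{-1})$ and $|\nabla \eta_n| = O((R_n')^{-1})$. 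Set $v_j^n = \chi_n u_j^n$, $w_j^n = \eta_n u_j^n$, and $r_j^n = u_j^n - v_j^n - w_j^n$. Then $\chi_n$ and $\eta_n$ have disjoint supports separated by distance $\geq R_n'/2 - 2R_n \to \infty$, and $\|r_j^n\|_{L^2} \to 0$ by the second display above. Extracting a subsequence, $\|v_j^n\|_{L^2}^2 \to T_j \in [0, M_j]$, hence $\|w_j^n\|_{L^2}^2 \to M_j - T_j$, with $\sum_j T_j = Z^{(m)}$.

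For the energy splitting, the kinetic part is routine: expanding $\nabla v_j^n = \chi_n \nabla u_j^n + u_j^n \nabla \chi_n$ and similarly for $\nabla w_j^n$, using $\chi_n^2 + \eta_n^2 \le 1$ (disjoint supports) together with the $H^1$-boundedness of $\{u_j^n\}$ from Lemma~\ref{NegIM2}, one obtains
\[
\|\nabla u_j^n\|_{L^2}^2 \ge \|\nabla v_j^n\|_{L^2}^2 + \|\nabla w_j^n\|_{L^2}^2 + o(1).
\]
For the nonlocal term, expand $|u_j^n|^p$ in the decomposition $u_j^n = v_j^n + w_j^n + r_j^n$; using Sobolev interpolation between $\|r_j^n\|_{L^2} \to 0$ and the $H^1$-bound, $\|r_j^n\|_{L^{2pr/(2r-1)}} \to 0$, and the Hardy-Littlewood-Sobolev inequality dispatches every error term containing $r_j^n$. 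The genuine cross contribution $\mathbb{F}_{2p}(v_k^n, w_j^n)$ is controlled by the spatial separation: splitting $W = W \mathbf{1}_{|z| \le \rho_n} + W \mathbf{1}_{|z| > \rho_n}$ with $\rho_n \to \infty$ but $\rho_n < R_n'/2 - 2R_n$, the first piece contributes nothing (the integrand vanishes by disjointness of supports at distance $>\rho_n$), while (h1) ensures that $\|W \mathbf{1}_{|z| > \rho_n}\|_{L_w^r} \to 0$, so the Hardy-Littlewood-Sobolev inequality kills the second piece. Thus $\mathcal{I}(u^n) \ge \mathcal{I}(v^n) + \mathcal{I}(w^n) + o(1)$.

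Finally, rescaling $\tilde v_j^n = \sqrt{T_j/\|v_j^n\|_{L^2}^2}\, v_j^n$ when $T_j > 0$ (and $\tilde v_j^n = 0$ when $T_j = 0$), and similarly for $\tilde w_j^n$, changes the energy by only $o(1)$ (by continuity of $\mathcal{I}$ on $H^1$-bounded sets), and places $\tilde v^n, \tilde w^n$ on the constraint surfaces $\Sigma_T^{(m)}$ and $\Sigma_{M-T}^{(m)}$ respectively (interpreting entries with $T_j = 0$ as missing components). Hence $\mathcal{I}(\tilde v^n) \ge I_T^{(m)}$, $\mathcal{I}(\tilde w^n) \ge I_{M-T}^{(m)}$, and passing to the limit yields $I_T^{(m)} + I_{M-T}^{(m)} \le I_M^{(m)}$. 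The main obstacle is the nonlocal cross-term estimate: one must quantitatively exploit the decay of $W$ at infinity from (h1) against the $L^{2r/(2r-1)}$ size of the densities $|u_j^n|^p$, which is precisely where the weak-$L^r$ splitting trick is essential.
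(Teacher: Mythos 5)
Your overall strategy — cut off near a concentration point, cut off far away, show the energy nearly splits, rescale onto the constraint spheres, and pass to the limit — is sound and in the right spirit, but the decomposition you chose differs from the paper's in a way that creates a genuine error. The paper takes a partition of unity $\phi_R^2+\psi_R^2\equiv 1$ (with $\phi_R,\psi_R$ \emph{overlapping} on the annulus $R\le|x|\le 2R$) and exploits the elementary pointwise bound
\[
\phi_R(x)^p\phi_R(y)^p+\psi_R(x)^p\psi_R(y)^p\ \le\ \phi_R(x)^2+\psi_R(x)^2\ =\ 1\qquad(p\ge 2),
\]
which, because $W\ge 0$ and the nonlocal term enters $\mathcal I$ with a minus sign, gives
$\mathcal I(U_k^{(1)})+\mathcal I(U_k^{(2)})\le \mathcal I(u^k)+CR^{-2}$
directly, with \emph{no} use whatsoever of the decay of $W$. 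Your approach instead uses cutoffs with disjoint supports plus a remainder, which forces you to show the genuine cross term $\int W(x-y)\,|v^n_k(x)|^p|w^n_j(y)|^p\,dx\,dy$ vanishes, and this is where the gap sits.

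Specifically, the claim that $\|W\mathbf{1}_{|z|>\rho_n}\|_{L_w^r}\to 0$ is \emph{false} for the Coulomb-type kernels the paper is designed to cover. Take $W(z)=|z|^{-\alpha}$ with $r=N/\alpha$ and set $W_\rho=W\mathbf{1}_{|z|>\rho}$. Its distribution function is $d_{W_\rho}(\lambda)=c_N(\lambda^{-N/\alpha}-\rho^N)_+$, and substituting $\lambda=t\rho^{-\alpha}$ with $t\in(0,1)$ gives
\[
\lambda\, d_{W_\rho}(\lambda)^{1/r}= c_N^{\alpha/N}\, t\,(t^{-N/\alpha}-1)^{\alpha/N},
\]
which is \emph{independent of $\rho$} and tends to $c_N^{\alpha/N}$ as $t\to 0^+$. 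So $\|W_\rho\|_{L_w^r}$ is bounded below uniformly in $\rho$; the weak $L^r$ norm of $|z|^{-\alpha}$ is scale-invariant and does not decay under truncation to large $|z|$. Hence the Hardy--Littlewood--Sobolev argument applied to $W_\rho$ does not kill the cross term as written.

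The fix is easy but uses a different estimate: hypothesis (h1) gives $\|W\mathbf{1}_{|z|>\rho_n}\|_{L^\infty}=\sup_{|z|>\rho_n}W(z)\to 0$, and then
\[
\int W_{\rho_n}(x-y)|v^n_k(x)|^p|w^n_j(y)|^p\,dx\,dy\ \le\ \|W_{\rho_n}\|_{L^\infty}\,\|v^n_k\|_{L^p}^p\,\|w^n_j\|_{L^p}^p\to 0,
\]
using the fact that $p<\tfrac{2r-1}{r}+\tfrac 2N<\tfrac{2N}{N-2}$ (for $N\ge 3$; trivial for $N=1,2$), so the $L^p$ norms are bounded by $H^1$-boundedness. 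With that substitution your argument goes through, but note that this estimate uses $L^1$ integrability of the densities $|v^n_k|^p$, not their $L^{2r/(2r-1)}$ norm, so the concluding sentence of your proposal also needs adjusting. One further minor imprecision: when $T_j=0$ you assert that setting $\tilde v_j^n=0$ ``changes the energy by only $o(1)$''; in fact it changes it by $\tfrac12\|\nabla v_j^n\|_{L^2}^2+o(1)$, which need not be small, but since it is $\ge 0$ and you only need $\mathcal I(v^n)\ge \mathcal I(\tilde v^n)+o(1)\ge I_T^{(m)}+o(1)$, the inequality you want still holds once you argue it in one direction rather than via ``continuity''.
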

\begin{proof}
The proof is almost same as the proof of Lemma~2.12 of \cite{[SB3]}; we only provide an outline here.
Let $\varepsilon>0$ be arbitrary.
Using the definition of $Z^{(m)}$ and the convergence properties of $Q_k(R),$ there exists $R_0(\varepsilon)$, $k_0(\varepsilon)$ such that
for all $R\geq R_0(\varepsilon)$ and $k\geq k_0(\varepsilon)$, we have that
\begin{equation}\label{epsiInq}
Z^{(m)}-\frac{3\varepsilon}{4}<Q_k(R)\leq Q_k(2R)\leq Z^{(m)}+\frac{3\varepsilon}{4}.
\end{equation}
The inequalities \eqref{epsiInq} together with the definition of $Q_k$ implies that
there exists a sequence of vectors $y_k$ in $\mathbb{R}^N$ such that
\begin{equation}\label{sub113}
\int_{B_{R}(y_k)}\sum_{j=1}^m|u_j^k|^2\ dx> Z^{(m)}-\epsilon,\ \
\int_{B_{2R}(y_k)}\sum_{j=1}^m|u_j^k|^2\ dx < Z^{(m)} +\epsilon.
\end{equation}
Let $\phi\in C_0^{\infty}(\mathbb{R}^N)$ be such that $\phi(x)\equiv 0$ for $|x|\geq 2$ and $\phi(x)\equiv 1$ for $|x|\leq 1,$ and take $\psi\in C^\infty(\mathbb{R}^N)$ such that $\phi^2+\psi^2 \equiv 1$ for $x\in \mathbb{R}^N.$ For any $R>0,$ let $\phi_R$ and $\psi_R$ denote
the rescale functions $\phi_R(x)=\phi(x/R)$ and $\psi_R(x)=\psi(x/R)$ for $x\in \mathbb{R}^N.$ Let us now define
\[
\begin{aligned}
 u_{j,k}^{(1)}=\phi_R(x+y_k)u_j^k,\ \ u_{j,k}^{(2)}=\psi_R(x+y_k)u_j^k,\ 1\leq j\leq m.
\end{aligned}
\]
From Lemma~\ref{NegIM2}, the sequences $\{u_{j,k}^{(1)}\}_{k\geq 1}$ and $\{u_{j,k}^{(2)}\}_{k\geq 1}$, $1\leq j\leq m$
are bounded in $L^2$. Thus, by passing to subsequences, we may assume that there exists $T\in [0, M_1]\times \ldots \times [0, M_m]$
such that $\int_{\mathbb{R}^N}|u_{j,k}^{(1)}|^2\ dx\to T_j$, whence it also follows that $\int_{\mathbb{R}^N}|u_{j,k}^{(2)}|^2\ dx\to M_j-T_j.$
Now we have
\[
T_1+\ldots +T_m=\lim_{k\to \infty}\sum_{j=1}^m\int_{\mathbb{R}^N}|u_{j,k}^{(1)}|^2\ dx=\lim_{k\to \infty}\sum_{j=1}^m\int_{\mathbb{R}^N}\phi_{R}^2|u_j^k|^2\ dx,
\]
where and in what follows we have written the rescaled functions $\phi_R(x+y_k)$
and $\psi_R(x+y_k)$ simply by $\phi_R$ and $\psi_R$ respectively. From \eqref{sub113} it follows that, for any $k\in \mathbb{N}$,
\[
Z^{(m)}-\varepsilon <\sum_{j=1}^m\int_{\mathbb{R}^N}\phi_{R}^2|u_j^k|^2\ dx<Z^{(m)}+\varepsilon.
\]
Then it follows that
\[
\left\vert(T_1+\ldots+T_m)-Z^{(m)}\right\vert<\varepsilon.
\]
Let us write $U_k^{(1)}=(u_{1,k}^{(1)}, \ldots, u_{m,k}^{(1)})$ and $U_k^{(2)}=(u_{1,k}^{(2)}, \ldots, u_{m,k}^{(2)}).$
Then, using a standard argument, one can obtain that
\begin{equation} \label{Eineq}
\mathcal{I}(U_k^{(1)})+\mathcal{I}(U_k^{(2)})\leq \mathcal{I}(u_1^n, \ldots, u_m^k)+C\varepsilon,\ \forall k.
\end{equation}
To prove \eqref{caseCOMP}, since $\{U_k^{(1)}\}_{k\geq 1}$ and $\{U_k^{(2)}\}_{k\geq 1}$
are bounded in $Y_m$, so by passing to a subsequence, we may assume that
$\mathcal{I}(U_k^{(1)}) \to K_1$ and $\mathcal{I}(U_k^{(2)}) \to K_2$, as $k\to \infty.$
Then, since ${\textstyle \lim_{k\to \infty}\mathcal{I}(u_1^k, \ldots, u_m^k)=I_{M}^{(m)}},$ \eqref{Eineq} implies that
$K_1 +K_2 \leq I_{M}^{(m)}+C\epsilon.$ Taking $\varepsilon$ sufficiently small, $R$ sufficiently
large, and making use of results from preceding paragraphs, we can find, for every $a\in \mathbb{N},$
the sequences $\{U_k^{(1,a)}\}$ and $\{U_k^{(2,a)}\}$ in $Y_m$ such that
\[
\begin{aligned}
& \lim_{k\to \infty}\|u_{j, k}^{(1,a)}\|_{L^2}^2=T_j(a),\ \lim_{k\to \infty}\|u_{j, k}^{(2,a)}\|_{L^2}^2=M_j-T_j(a),\ 1\leq j\leq m, \\
& \lim_{k\to \infty}\mathcal{I}\left( U_k^{(i,a)}\right) =\lim_{k\to \infty}\mathcal{I}\left( u_{1,k}^{(i,a)}, \ldots, u_{m,k}^{(i,a)}\right)= K_i(a),\ i=1,2,
\end{aligned}
\]
where $T_j(a)\in [0, M_j]$ and $K_i(a)$ satisfy
\begin{equation}\label{K1K2Limit}
\left\vert \sum_{j=1}^mT_j(a)-Z^{(m)}\right\vert \leq \epsilon\ \ \textrm{and}\ \ K_1(a) +K_2(a) \leq I_M^{(m)}+\frac{1}{a}.
\end{equation}
One can further pass to a subsequence and assume that
$T_j(a)\to T_j\in [0, M_j]$ and $K_i(a)\to K_i.$
Furthermore, after relabeling the sequences
$\{U_k^{(i)}\}$ to be the diagonal subsequences
$ U_k^{(i)}=U_k^{(i, k)}$, $i=1,2,$
we can further assume that
\[
\begin{aligned}
& \lim_{n\to \infty}\|u_{j, k}^{(1)}\|_{L^2}^2=T_j,\ \lim_{n\to \infty}\|u_{j, k}^{(2)}\|_{L^2}^2=M_j-T_j,\ j=1,\ldots,m, \\
& \lim_{n\to \infty}\mathcal{I}( U_k^{(i)})=\lim_{n\to \infty}\mathcal{I}( u_{1,k}^{(i)}, \ldots, u_{m,k}^{(i)})= K_i,\ i=1,2.
\end{aligned}
\]
Now, passing limit as $a \to \infty$ in the first inequality of \eqref{K1K2Limit}, it follows that $Z^{(m)}=T_1+\ldots+T_m.$
In view of the second inequality of \eqref{K1K2Limit}, the proof will be
complete if we are able to deduce that
$K_1 \geq I_T^{(m)}$ and $K_2 \geq I_{M-T}^{(m)}.$
To prove $K_1 \geq I_T^{(m)},$ we consider two cases, namely, $T_j>0$ for all $1\leq j\leq m;$ and
exactly $\widetilde{m}$ of $T_1, \ldots, T_m$ are zero for any $1\leq \widetilde{m}\leq m-1.$
Suppose first that $T_1>0, \ldots, T_m>0.$ Define the numbers
\[
\beta_{j, k}^{(1)}=\frac{\sqrt{T_j}}{\|u_{j,k}^{(1)}\|_{L^2}},\ j=1,\ldots,m.
\]
Then, one has that  $\mathcal{I}(\beta_{1,k}^{(1)}u_{1,k}^{(1)},\ldots,\beta_{m,k}^{(1)}u_{m,k}^{(1)})\geq I_{T}^{(m)}.$
Since $\beta_{j, k}^{(1)}\to 1$ as $k\to \infty,$ it follows that
\[
K_1=\lim_{k\to \infty}\mathcal{I}(\beta_{1,k}^{(1)}u_{1,k}^{(1)},\ldots,\beta_{m,k}^{(1)}u_{m,k}^{(1)})\geq I_{T}^{(m)}.
\]
Now suppose that exactly $\widetilde{m}$ of $T_1, \ldots, T_m$ are zero for any $1\leq \widetilde{m}\leq m-1.$
By relabeling the indices on $T_j$'s, we may assume that
$T_1=0, \ldots, T_{\widetilde{m}}=0$ and $T_{\widetilde{m}+1}>0, \ldots, T_m>0.$
Then, for each $j=1,\ldots,m,$ using the Hardy-Littlehood-Sobolev and
Gagliardo-Nirenberg inequalities, one obtains that
\[
\begin{aligned}
\int_{\mathbb{R}^{N}}(W\star|u_{j,k}^{(1)}|^p)|u_{i,k}^{(1)}|^p\ dx& \leq C \|W\|_{L_w^r}\|u_{j,k}^{(1)} \|_{L^{ \frac{2pr} {2r-1}} }^p\| u_{i,k}^{(1)} \|_{L^{\frac{2pr}{2r-1}}}^p \\
& \leq C \|\nabla u_{j,k}^{(1)} \|_{L^2}^{\mu p}\|u_{j,k}^{(1)} \|_{L^2}^{(1-\mu) p}\|\nabla u_{i,k}^{(1)} \|_{L^2}^{\mu p}\| u_{i,k}^{(1)} \|_{L^2}^{(1-\mu) p}\\
& \leq C \| u_{i,k}^{(1)} \|_{L^2}^{(1-\mu) p}\to 0,\ 1\leq i\leq \widetilde{m},
\end{aligned}
\]
as $k\to \infty,$
where $\mu=N(pr-2r+1)/2rp.$
In consequence, we obtain that
\[
\begin{aligned}
K_1& =\lim_{k\to \infty}\mathcal{I}(U_k^{(1)})=\lim_{k\to \infty}\mathcal{I}(u_{1,k}^{(1)}, \ldots, u_{m,k}^{(1)}) \\
& =\lim_{k\to \infty}\left(\sum_{j=1}^m\|\nabla u_{j,k}^{(1)}\|_{L^2}^2-\frac{1}{2p}\sum_{i,j=\widetilde{m}+1}^m\int_{\mathbb{R}^{N}}(W\star |u_{i,k}^{(1)}|^p|u_{j,k}^{(1)}|^p\ dx \right)\\
& \geq \liminf_{k\to \infty}\left(\sum_{j=\widetilde{m}+1}^m\|\nabla u_{j,k}^{(1)}\|_{L^2}^2-\frac{1}{2p}\sum_{i,j=\widetilde{m}+1}^m\int_{\mathbb{R}^{N}}(W\star |u_{i,k}^{(1)}|^p|u_{j,k}^{(1)}|^p\ dx \right)\\
& \geq I_{0,\ldots,0, T_{\widetilde{m}+1 }, \ldots, T_m}^{(m)}=I_T^m.
\end{aligned}
\]
To prove that $K_2 \geq I_{M-T}^{(m)},$ one can go through the same
argument as in the proof of $K_1 \geq I_{T}^{(m)}$ by
treating $M_1-T_1, \ldots ,M_m-T_m$ as $T_1, \ldots, T_m,$ respectively.
\end{proof}

\section{The problem with two constraints}\label{section2}
In this section, we follow the method developed in \cite{[AB11]} to rule out the possible dichotomy of the minimizing sequences.
For this purpose, we require to prove the strict subadditivity inequality
for the function $I_M^{(2)}$.

\smallskip

In the sequel we shall use the following notation:
\[
\langle E\rangle(f_1, f_2, \ldots, f_m)=E(f_1)+E(f_2)+\ldots + E(f_m),
\]
where the functional $E$ is as defined in Lemma~\ref{tech1}.
The strict subadditivity under two constraints takes the following form:
\begin{lem}\label{2MainSlemIa}
Let $\mathbb{R}_{\geq 0}=[0, \infty).$ For any $M, T\in \mathbb{R}_{\geq 0}^2$ satisfying
$M, T\neq \{\mathbf{0}\}$ and $S=M+T\in \mathbb{R}_{+}^2,$ one has
\begin{equation}\label{2MainSubadd}
I^{(2)}_S<I^{(2)}_M+I^{(2)}_T.
\end{equation}
\end{lem}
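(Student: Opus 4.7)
\medskip
\noindent\textbf{Proof proposal.} The plan is to combine two ingredients: a strict scaling inequality of the type $I^{(2)}_{\theta M}<\theta I^{(2)}_{M}$ for $\theta>1$, and a case analysis based on the supports of $M$ and $T$. The scaling inequality serves as the ``engine'' for strict monotonicity, while the case analysis invokes Lemmas~\ref{tech1}, \ref{tech3}, and \ref{VarE3b} in the appropriate configurations.

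\medskip
\noindent\textit{Step 1 (strict scaling).} The first move is to establish, for any $\theta>1$ and any $M\in\mathbb{R}_{\geq 0}^{2}\setminus\{\mathbf{0}\}$, the inequality $I^{(2)}_{\theta M}<\theta I^{(2)}_{M}$. Given a minimizing sequence $\{u^{n}\}$ for $I^{(2)}_{M}$, set $\widetilde{u}^{n}_{j}=\theta^{1/2}u^{n}_{j}$, which lies in $\Sigma^{(2)}_{\theta M}$. A direct calculation gives
\[
\mathcal{I}(\widetilde{u}^{n})=\theta\,\mathcal{I}(u^{n})+(\theta-\theta^{p})\sum_{j,k}\mathbb{F}_{2p}(u^{n}_{k},u^{n}_{j}).
\]
Rewriting the identity $\mathcal{I}(u^{n})=\tfrac{1}{2}\sum_{j}\|\nabla u^{n}_{j}\|_{L^{2}}^{2}-\sum_{j,k}\mathbb{F}_{2p}(u^{n}_{k},u^{n}_{j})$ and using Lemma~\ref{NegIM2} one obtains $\sum_{j,k}\mathbb{F}_{2p}(u^{n}_{k},u^{n}_{j})\geq -I^{(2)}_{M}/2>0$ for large $n$. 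Since $\theta-\theta^{p}<0$, passing to the limit yields
\[
I^{(2)}_{\theta M}\leq \frac{\theta+\theta^{p}}{2}\,I^{(2)}_{M}<\theta\,I^{(2)}_{M}.
\]

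\medskip
\noindent\textit{Step 2 (case analysis).} I would then split into cases according to the supports $\mathrm{supp}(M),\mathrm{supp}(T)\subset\{1,2\}$.

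(a) \emph{Complementary singletons}, e.g.\ $M=(M_{1},0)$, $T=(0,T_{2})$ with $M_{1},T_{2}>0$. By Lemma~\ref{tech3}, $I^{(2)}_{M}=E(\phi_{M_{1}})$ and $I^{(2)}_{T}=E(\phi_{T_{2}})$ for positive ground states $\phi_{M_{1}},\phi_{T_{2}}$. Using the test function $(\phi_{M_{1}},\phi_{T_{2}})\in \Sigma^{(2)}_{S}$ and the decomposition $\mathcal{I}(\phi_{M_{1}},\phi_{T_{2}})=E(\phi_{M_{1}})+E(\phi_{T_{2}})-\mathbb{F}_{p}(\phi_{M_{1}},\phi_{T_{2}})$, strict positivity of the cross interaction $\mathbb{F}_{p}(\phi_{M_{1}},\phi_{T_{2}})>0$ (both profiles are strictly positive and $W\geq0$ is nontrivial) yields $I^{(2)}_{S}<I^{(2)}_{M}+I^{(2)}_{T}$.

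(b) \emph{Colinear case within $\mathbb{R}_{+}^{2}$}, i.e.\ $T=\lambda M$ for some $\lambda>0$. Here the Lions-style argument from Step~1 applies verbatim: with $\theta_{1}=1+\lambda$ and $\theta_{2}=(1+\lambda)/\lambda$, Step~1 yields $\tfrac{1}{1+\lambda}I^{(2)}_{(1+\lambda)M}<I^{(2)}_{M}$ and $\tfrac{\lambda}{1+\lambda}I^{(2)}_{(1+\lambda)M}<I^{(2)}_{\lambda M}$; adding gives $I^{(2)}_{S}=I^{(2)}_{(1+\lambda)M}<I^{(2)}_{M}+I^{(2)}_{T}$.

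(c) \emph{General case} ($M,T\in\mathbb{R}_{+}^{2}$ non-colinear, or $M$ a single and $T\in\mathbb{R}_{+}^{2}$). The idea is to decompose $T=\lambda M+R$ with $\lambda=\min_{j}(T_{j}/M_{j})$ and $R\in\mathbb{R}_{\geq 0}^{2}$ having a zero component, then concatenate a colinear sub-step (handled by (b)) with a complementary-support sub-step (handled by a translated test-function argument as in (a)). Translations are taken with $|y_{n}|\to\infty$, and (h1) is used to drive the cross convolution terms $\int(W\star|\cdot|^{p})|\cdot(\cdot-y_{n})|^{p}$ to zero; the strict decrease is then inherited from the colinear scaling step.

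\medskip
\noindent\textit{Main obstacle.} The delicate point is case (c), since the scaling of Step~1 acts only uniformly and does not by itself relate $M$ to $S$ when $T$ is not proportional to $M$. The plan there is to pay the ``strictness'' via the proportional sub-step $M\mapsto (1+\lambda)M$ (handled by (b)), while using Lemma~\ref{VarE3b} to control the cross-interaction lower bound and ensure the subsequent mass-renormalization correction is absorbed as $o(1)$. Correctly balancing the $|y_{n}|\to\infty$ limit (which kills cross convolutions but also eliminates the strict gain) against the scaling-induced strict decrease is what the argument from \cite{[AB11]} is designed to resolve, and that is where most of the technical work will be concentrated.
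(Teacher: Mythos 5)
The scaling inequality $I^{(2)}_{\theta M}<\theta I^{(2)}_{M}$ in your Step~1 is correct (and not in the paper), and your Steps~2(a) and 2(b) are valid; your case (b) is a clean way to handle the colinear case that the paper does not isolate. But your case (c) — which is the heart of the matter, since it contains what the paper calls case $(a_1)$ as well as the mixed cases — has a genuine gap, and the concatenation strategy you outline cannot be repaired as stated.

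The problem is the direction of subadditivity. You decompose $T=\lambda M+R$ with $R$ having a zero component and write $S=(1+\lambda)M+R$, and then propose to combine the colinear step (controlling $I^{(2)}_{(1+\lambda)M}$ in terms of $I^{(2)}_{M}+I^{(2)}_{\lambda M}$) with a disjoint-support test function for the $R$ part. But after passing through your colinear step you end up bounding $I^{(2)}_{S}$ by something of the form $I^{(2)}_{M}+I^{(2)}_{\lambda M}+I^{(2)}_{R}-\delta$. To conclude you would need $I^{(2)}_{\lambda M}+I^{(2)}_{R}\leq I^{(2)}_{T}$, i.e.\ $I^{(2)}_{\lambda M}+I^{(2)}_{R}\leq I^{(2)}_{\lambda M+R}$ — this is the \emph{reverse} of subadditivity and is false in general, with no control on the size of the gap relative to the $\delta$ you gained. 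Taking $|y_{n}|\to\infty$ does not help: as you yourself observe, it kills the cross interactions but simultaneously eliminates the very source of the strict gain, and the two effects are not linked quantitatively by anything in Lemmas~\ref{tech1} or \ref{VarE3b}. The scaling inequality of Step~1 acts along a single ray in $\mathbb{R}_{+}^{2}$ and simply does not relate $I^{(2)}_{S}$ to $I^{(2)}_{M}$ and $I^{(2)}_{T}$ when $M$, $T$, $S$ are not pairwise colinear.

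By contrast, the paper avoids the decomposition $T=\lambda M+R$ entirely. Its mechanism for the general case is to compare, after passing to subsequences, the normalized marginal energies $A_{1},A_{2}$ (and $B_{1},B_{2}$) of the two minimizing sequences, allocate the \emph{entire} extra first-component mass to whichever sequence has the smaller marginal cost via a scaling factor $\ell^{1/2}$, and glue the second components by a translation $x_{n}\to 0$ chosen merely to disjointify supports (so that $L^{2}$-norms split exactly, while the cross interaction terms are kept rather than discarded). The strict gain then comes either from the strict inequality $A_{1}<A_{2}$ (or $A_{1}>A_{2}$) directly, or, when $A_{1}=A_{2}$, from Lemma~\ref{tech1} applied to the second component. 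This comparison-of-marginals device is precisely what replaces the pure scaling argument once the two constraint parameters decouple; without some version of it your case (c) remains open.
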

To prove Lemma~\ref{2MainSlemIa}, we use ideas from \cite{[SB3], SBC}.
Since $M_1+T_1>0$, the following cases arise: $M_1>0$ and $T_1>0$; $M_1=0$ and $T_1>0$; or
$M_1>0$ and $T_1=0.$ The third case can be reduced to the second
case by switching $M_1$ and $T_1$ and so we do not consider it.
In the first case, since $M_2+T_2>0,$ the following cases may arise:
\begin{itemize}
\item[$(a_1)$] $M_1>0, T_1>0, M_2>0,$ and $T_2>0,$
\item[$(a_2)$] $M_1>0, T_1>0, M_2=0,$ and $T_2>0,$
\item[$(a_3)$] $M_1>0, T_1>0, M_2>0,$ and $T_2=0.$
\end{itemize}
In the second case, since $M_1+M_2>0$, $T_1+T_2>0$, and $M_2+T_2>0,$ the following cases may arise:
\begin{itemize}
\item[$(b_1)$] $M_1=0$, $T_1>0$, $M_2>0,$ and $T_2>0,$
\item[$(b_2)$] $M_1=0$, $T_1>0$, $M_2>0,$ and $T_2=0.$
\end{itemize}
In order to prove Lemma~\ref{2MainSlemIa}, it suffices to consider the cases $(a_1)$, $(b_1)$, and $(b_2)$. All other cases can be reduced to one of these cases by switching roles of $M_j$'s and $T_j$'s.
We consider these three cases in the next three lemmas.

\smallskip

The first lemma concerns the case $(a_1).$
\begin{lem}
Let $M=(M_1,M_2)\in \mathbb{R}_{+}^2$ and
 $T=(T_1, T_2)\in \mathbb{R}_{+}^2$. Then one has
 \begin{equation}\label{2casea1}
 I^{(2)}_{M+T}<I^{(2)}_M+I^{(2)}_T.
 \end{equation}
\end{lem}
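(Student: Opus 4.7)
The plan is to construct a test function $\widetilde{\phi}\in \Sigma_{M+T}^{(2)}$ whose energy is strictly less than $I_{M}^{(2)}+I_{T}^{(2)}$, by superimposing nonnegative, compactly supported near-minimizers of $I_{M}^{(2)}$ and $I_{T}^{(2)}$ placed with disjoint supports; the strict inequality is then driven by the strictly negative cross potential interaction between the two pieces.

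Fix $\varepsilon>0$ and choose near-minimizers $\phi^{M}\in \Sigma_{M}^{(2)}$ and $\phi^{T}\in \Sigma_{T}^{(2)}$ with $\mathcal{I}(\phi^{M})\leq I_{M}^{(2)}+\varepsilon$ and $\mathcal{I}(\phi^{T})\leq I_{T}^{(2)}+\varepsilon$. By density of $C_{c}^{\infty}(\mathbb{R}^{N})$ in $H^{1}(\mathbb{R}^{N})$ and a small renormalization to restore the mass constraints, both can be taken real-valued, nonnegative, and supported in a common ball $B_{R}(0)$; nonnegativity comes for free by replacing each component by its modulus (which does not increase $\mathcal{I}$, as in Theorem~\ref{exisTHM}(b)). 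Pick any $y\in \mathbb{R}^{N}$ with $|y|>2R$; then $\phi^{M}_{j}(\cdot)$ and $\phi^{T}_{j}(\cdot-y)$ have disjoint supports, so $\widetilde{\phi}_{j}:=\phi^{M}_{j}+\phi^{T}_{j}(\cdot-y)$ satisfies $\|\widetilde{\phi}_{j}\|_{L^{2}}^{2}=M_{j}+T_{j}$, placing $\widetilde{\phi}\in \Sigma_{M+T}^{(2)}$. Since the supports are disjoint, the kinetic and self-interaction contributions split exactly, and a direct calculation using the symmetry of $J$ yields
\begin{equation*}
\mathcal{I}(\widetilde{\phi})=\mathcal{I}(\phi^{M})+\mathcal{I}(\phi^{T})-\frac{1}{p}\sum_{k,j=1}^{2}J\bigl(\phi^{M}_{k},\phi^{T}_{j}(\cdot-y)\bigr).
\end{equation*}
Each cross term is strictly positive: iterating (h2) from any point at which $W$ does not vanish forces $W>0$ on $(r_{0},\infty)$ for some $r_{0}>0$, and by taking $|y|$ large enough the integrand $W(|x-x'|)|\phi^{M}_{k}(x)|^{p}|\phi^{T}_{j}(x'-y)|^{p}$ is strictly positive on a set of positive measure. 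Consequently $\mathcal{I}(\widetilde{\phi})\leq I_{M}^{(2)}+I_{T}^{(2)}+2\varepsilon-\text{(cross)}$.

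\textbf{Main obstacle.} The delicate part is arranging the strictly positive cross term to dominate the total near-minimizer error $2\varepsilon$, so that $\mathcal{I}(\widetilde{\phi})<I_{M}^{(2)}+I_{T}^{(2)}$. The difficulty is that the support radius $R=R(\varepsilon)$ of the near-minimizers may grow as $\varepsilon\to 0$ (since truncating a generic $H^{1}$ near-minimizer to compact support may require a large cutoff), which forces $|y|>2R$ to be large and thereby makes the cross term small. The remedy is to combine a uniform lower bound $\|\phi^{M}_{k}\|_{L^{2pr/(2r-1)}},\,\|\phi^{T}_{j}\|_{L^{2pr/(2r-1)}}\geq \delta_{0}>0$ (extracted from the argument in the proof of Lemma~\ref{tech1}) with the quantitative decay estimate $W(r)\geq C r^{-\Gamma}$ at large $r$ (another consequence of (h2)); this produces a lower bound on the cross term of order $(R+|y|)^{-\Gamma}$ times a uniform constant, which beats $2\varepsilon$ for $\varepsilon$ small and $|y|$ chosen just above $2R(\varepsilon)$. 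Once this quantitative balance is achieved, $I_{M+T}^{(2)}\leq \mathcal{I}(\widetilde{\phi})<I_{M}^{(2)}+I_{T}^{(2)}$, completing the proof of \eqref{2casea1}.
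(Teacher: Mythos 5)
Your construction—glue together compactly supported near-minimizers of $I^{(2)}_M$ and $I^{(2)}_T$ with disjoint supports and harvest the strictly positive cross-interaction—is genuinely different from the paper's argument, and it has a real gap that you have correctly located but not closed. The difficulty is that the two small quantities in play are not independent: the truncation radius $R(\varepsilon)$ you must pay to get compactly supported functions within $\varepsilon$ of $I^{(2)}_M$, $I^{(2)}_T$ is dictated by the spatial spread of your near-minimizers, and there is no a priori control on that spread. Concretely, a sequence of near-minimizers could disperse, so that the $L^p$ mass inside any ball of radius $R$ goes to $0$ as $\varepsilon\to 0$ while $R(\varepsilon)$ must grow rapidly. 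In that regime the cross term is bounded above by roughly $W(|y|-2R)\,\|\phi^M\|_{L^p}^p\|\phi^T\|_{L^p}^p$ with $|y|>2R(\varepsilon)$, and nothing forces this to dominate $2\varepsilon$; the lower bound $\|\phi^M_k\|_{L^{2pr/(2r-1)}}\geq\delta_0$ from Lemma~\ref{tech1} controls an integral norm, not localization, so it does not rescue the estimate. Ruling out exactly this kind of spreading is what the strict subadditivity is \emph{for}, so the argument as stated is circular at the crucial step. (Your idea does work verbatim in the paper's last lemma of Section~\ref{section3}, where the two pieces are fixed true minimizers of one-component problems already supplied by Lemma~\ref{tech3}; there is no $\varepsilon$ to beat in that case.)

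The paper avoids the competition entirely by never asking the cross-interaction to produce the strict gap. It instead compares the auxiliary limits $A_1,A_2$ (and $B_1,B_2$) built from the sequences, splitting into the cases $A_1<A_2$, $A_1>A_2$, $A_1=A_2$. When $A_1\neq A_2$ the strict inequality between these scalars already gives the strict gap; the disjoint-support gluing is used only in one component, and its cross term is simply discarded (nonnegativity suffices). When $A_1=A_2$, the test function is obtained by \emph{rescaling} both components as $\ell^{1/2}u_1^n$, $s^{1/2}u_2^n$ with $\ell,s>1$, and the uniform quantitative gain $E(s^{1/2}u_2^n)\leq sE(u_2^n)-\delta$ from Lemma~\ref{tech1} supplies the strict inequality—with $\delta$ independent of any separation distance or support radius. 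If you want to salvage your own route you would need a uniform lower bound on the $L^p$ mass of near-minimizers inside balls of quantitatively controlled radius, which is essentially the conclusion of the concentration-compactness argument you are trying to set up.
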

\begin{proof}
We follow the ideas from \cite{[SB3], SBC}.
Let $\{(u_1^n, u_2^n)\}$ and $\{(v_1^n, v_2^n)\}$ be any sequences in $Y_2$ satisfying
\[
\begin{aligned}
& \lim_{n\to \infty}\|u_j^n\|_{L^2}^2=M_j,\ \ \lim_{n\to \infty}\|v_j^n\|_{L^2}^2=T_j,\ j=1,2,\\
& \lim_{n\to \infty}\mathcal{I}(u_1^n, u_2^n)=I^{(2)}_M,\ \ \lim_{n\to \infty}\mathcal{I}(v_1^n, v_2^n)=I^{(2)}_T.
\end{aligned}
\]
By passing to a subsequence if necessary, we may assume that the following values exist.
\[
\begin{aligned}
& A_1=\frac{1}{M_1}\lim_{n\to \infty}\left(E(u_1^n)-\mathbb{F}_p(u_1^n, u_2^n) \right),\ B_1=\frac{1}{M_2}\lim_{n\to \infty}E(u_2^n),\\
& A_2=\frac{1}{T_1}\lim_{n\to \infty}\left(E(v_1^n)-\mathbb{F}_p(v_1^n, v_2^n) \right),\ B_2=\frac{1}{T_2}\lim_{n\to \infty}E(v_2^n).
\end{aligned}
\]
To prove \eqref{2casea1}, we consider three cases: $A_1<A_2$; $A_1>A_2$; and $A_1=A_2$. Assume first that $A_1<A_2.$
Without loss of generality, we may assume that $u_j^n$ and $v_j^n$ are non-negative and
by density argument, we may also assume that $u_j^n$ and $v_j^n$ have compact supports.
Let $\widetilde{v}_2^n=v_2^n(\cdot - x_n\rho)$, where $\rho$ is some unit vector in $\mathbb{R}^N$ and $x_n$ is chosen such that $x_n\to 0$ as $n\to \infty$, and
$\widetilde{v}_2^n$ and $u_2^n$ have disjoint supports.
Define $(f_1^n, f_2^n)$ as follows:
$
f_1^n=\ell^{1/2}u_1^n$ and $f_2^n=u_2^n+\widetilde{v}_2^n,
$
where $\ell=(M_1+T_1)/M_1.$
Then we have that
\begin{equation}\label{Pfcaseq1}
\begin{aligned}
I^{(2)}_{M+T}& \leq \lim_{n\to \infty}\mathcal{I}(f_1^n, f_2^n)\\
& =\lim_{n\to \infty}\left(E(f_1^n)+\langle E\rangle(u_2^n,\widetilde{v}_2^n)-\mathbb{F}_p(f_1^n, f_2^n) \right)\\
&\leq \lim_{n\to \infty}\left(E(f_1^n)+\langle E\rangle(u_2^n,\widetilde{v}_2^n)-\mathbb{F}_p(f_1^n, u_2^n) \right).
\end{aligned}
\end{equation}
Since $\ell>1$ and $p\geq 2,$ we have that $\ell^{p/2}\geq \ell.$ Then it follows that
\begin{equation}\label{Pfcaseq2}
\begin{aligned}
 \mathbb{F}_{2p}(\ell^{1/2}f,\ell^{1/2}f)& =\ell^p \mathbb{F}_{2p}(f,f)\geq \ell \mathbb{F}_{2p}(f,f),\\
 \mathbb{F}_p(\ell^{1/2}f,g)& =\ell^{p/2} \mathbb{F}_p(f,g)\geq \ell \mathbb{F}_p(f,g).
\end{aligned}
\end{equation}
Making use of these observations, we obtain that
\begin{equation}\label{Pfcaseq3}
\begin{aligned}
E(f_1^n)-\mathbb{F}_p(f_1^n, u_2^n)& =\ell\|\nabla u_1^n\|_{L^2}^2-\mathbb{F}_{2p}(\ell^{1/2}u_1^n,\ell^{1/2}u_1^n)-\mathbb{F}_p(\ell^{1/2}u_1^n,u_2^n)\\
&\leq \ell\left( \|\nabla u_1^n\|_{L^2}^2-\mathbb{F}_{2p}(u_1^n,u_1^n)-\mathbb{F}_p(u_1^n,u_2^n)\right)\\
& = E(u_1^n)-\mathbb{F}_p(u_1^n,u_2^n)+\frac{T_1}{M_1}\left( E(u_1^n)-\mathbb{F}_p(u_1^n,u_2^n)\right)
\end{aligned}
\end{equation}
Using \eqref{Pfcaseq1}, \eqref{Pfcaseq3}, and the fact $A_1<A_2$, it follows that
\[
\begin{aligned}
I^{(2)}_{M+T}&\leq I^{(2)}_{M}+\lim_{n\to \infty}E(\widetilde{v}_2^n)+\frac{T_1}{M_1}(A_1M_1)\\
& < I^{(2)}_{M}+\lim_{n\to \infty}E(\widetilde{v}_2^n)+T_1A_2\\
& = I^{(2)}_{M}+\lim_{n\to \infty}\left(\langle E\rangle(v_1^n, \widetilde{v}_2^n)-\mathbb{F}_p(v_1^n, \widetilde{v}_2^n) \right)\\
& = I^{(2)}_{M}+\lim_{n\to \infty}\mathcal{I}(v_1^n, \widetilde{v}_2^n)
=I^{(2)}_{M}+I^{(2)}_{T}.
\end{aligned}
\]
The proof in the case $A_1>A_2$ goes through unchanged after swapping the indices and so we do not repeat here.
Next suppose that $A_1=A_2$. We consider two subcases: $B_1\leq B_2$ and $B_1\geq B_2.$
Suppose first that $A_1=A_2$ and $B_1\leq B_2.$ Let $\ell$ be defined as above and $s=(M_2+T_2)/M_2.$
Then we have that
\begin{equation}\label{Pfcaseq4}
\begin{aligned}
I^{(2)}_{M+T}& \leq \lim_{n\to\infty}\mathcal{I}(\ell^{1/2}u_1^n, s^{1/2}u_2^n)\\
& =\lim_{n\to\infty}\left(E(\ell^{1/2}u_1^n)+E(s^{1/2}u_2^n)-\mathbb{F}_p(\ell^{1/2}u_1^n,s^{1/2}u_2^n) \right)
\end{aligned}
\end{equation}
Since $\ell>1$, $s>1$, and $p\geq 2,$ we have that
$\ell^{p/2}\geq \ell$ and $s^{p/2}\geq s>1.$ It follows that
\[
\begin{aligned}
 \mathbb{F}_{2p}(\ell^{1/2}u_1^n,\ell^{1/2}u_1^n)&=\ell^p\mathbb{F}_{2p}(u_1^n,u_1^n)\geq \ell \mathbb{F}_{2p}(u_1^n,u_1^n) ,\\
 \mathbb{F}_p(\ell^{1/2}u_1^n,s^{1/2}u_2^n)&=\ell^{p/2}s^{p/2}\mathbb{F}_p(u_1^n,u_2^n)\geq \ell \mathbb{F}_p(u_1^n,u_2^n).
\end{aligned}
\]
Using these observations, a similar argument as in \eqref{Pfcaseq3} yields
\begin{equation}\label{Pfcaseq5}
\begin{aligned}
E(\ell^{1/2}u_1^n)-\mathbb{F}_p&(\ell^{1/2}u_1^n,s^{1/2}u_2^n)\leq \ell\left( E(u_1^n)-\mathbb{F}_p(u_1^n, u_2^n)\right)\\
& = E(u_1^n)-\mathbb{F}_p(u_1^n, u_2^n)+\frac{T_1}{M_1}\left(E(u_1^n)-\mathbb{F}_p(u_1^n, u_2^n) \right)
\end{aligned}
\end{equation}
Using Lemma~\ref{tech1}, there exists $\delta>0$ such that for sufficiently large $n,$
\begin{equation}\label{Pfcaseq6}
\begin{aligned}
E(s^{1/2}u_2^n)\leq sE(u_2^n)-\delta=E(u_2^n)+\frac{T_2}{M_2}E(u_2^n)-\delta.
\end{aligned}
\end{equation}
Inserting \eqref{Pfcaseq5} and \eqref{Pfcaseq6} into \eqref{Pfcaseq4} and using the assumptions
$A_1=A_2$ and $B_1\leq B_2$, we obtain that
\[
\begin{aligned}
I^{(2)}_{M+T}& \leq \lim_{n\to \infty}\left(\mathcal{I}(u_1^n, u_2^n)+\frac{T_2}{M_2}E(u_2^n)+\frac{T_1}{M_1}\left(E(u_1^n)-\mathbb{F}_p(u_1^n, u_2^n) \right)\right)-\delta\\
& =I^{(2)}_{M}+\frac{T_2}{M_2}(M_2B_1)+\frac{T_1}{M_1}(T_1A_1)-\delta\\
&\leq I^{(2)}_{M}+T_2B_2+T_1A_2-\delta\\
&=I^{(2)}_{M}+\lim_{n\to \infty}\mathcal{I}(v_1^n, v_2^n)-\delta=I^{(2)}_{M}+I^{(2)}_{T}-\delta,
\end{aligned}
\]
which gives the desired strict inequality. The proof in the case $A_1=A_2$ and $B_1\geq B_2$ follows a similar argument and we do not repeat here.
\end{proof}
The following lemma establishes \eqref{2MainSubadd} in the case $(b_1).$
\begin{lem}
For any $T\in \mathbb{R}_{+}^2$ and $M=(0, M_2)$ with $M_2>0,$ one has
\[
I^{(2)}_{M+T}<I^{(2)}_M+I^{(2)}_T.
\]
\end{lem}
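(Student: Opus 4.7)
The strategy adapts the template from the proof of case $(a_1)$, subcase $B_1\le B_2$: scale the component $v_2^n$ of an $I^{(2)}_T$-minimizing sequence up by $s^{1/2}$ with $s>1$, and recover the $L^2$ mass balance by grafting on a translated, amplitude-reduced copy of the single-component minimizer $\phi=\phi_{M_2}$ furnished by Lemma~\ref{tech3}. Since $M_1=0$, there is no $u_1^n$-piece to carry in the first component, so the first component of the trial is simply $v_1^n$.

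Concretely, I would take a non-negative, radially symmetric minimizing sequence $\{(v_1^n,v_2^n)\}$ for $I^{(2)}_T$ (obtained via Schwarz symmetrization and density). Fix $s\in(1,\,1+M_2/T_2)$, set $\beta=\sqrt{1-(s-1)T_2/M_2}\in(0,1)$, pick a unit vector $\rho\in\mathbb{R}^N$ and translations $|y_n|\to\infty$, and consider
\[
f_1^n=v_1^n,\qquad f_2^n=s^{1/2}v_2^n+\beta\phi(\cdot-y_n\rho).
\]
The radial decay of $v_2^n$ (Strauss's lemma) combined with the concentration of $\phi(\cdot-y_n\rho)$ near $y_n\rho$ gives $\langle v_2^n,\phi(\cdot-y_n\rho)\rangle_{L^2}\to 0$, so $\|f_1^n\|_{L^2}^2\to T_1$ and $\|f_2^n\|_{L^2}^2\to sT_2+\beta^2M_2=T_2+M_2$; after a multiplicative correction tending to $1$, the pair becomes admissible for $I^{(2)}_{M+T}$.

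The key computation combines four ingredients: Lemma~\ref{tech1} with $\Gamma=s$ yields $E(s^{1/2}v_2^n)\le sE(v_2^n)-\delta$ for some $\delta=\delta(s)>0$; the inequality $s^{p/2}\ge s$ (valid for $p\ge 2$, $s>1$) gives $\mathbb{F}_p(v_1^n,s^{1/2}v_2^n)\ge s\,\mathbb{F}_p(v_1^n,v_2^n)$; the same asymptotic disjointness wipes out the cross terms $\mathbb{F}_{2p}(s^{1/2}v_2^n,\beta\phi(\cdot-y_n\rho))$ and $\mathbb{F}_p(v_1^n,\beta\phi(\cdot-y_n\rho))$; and Lemma~\ref{VarE3b} supplies $E(v_2^n)-\mathbb{F}_p(v_1^n,v_2^n)\le -\delta_2$ for large $n$. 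Putting these together and passing to the $\liminf$ yields
\[
I^{(2)}_{M+T}\le I^{(2)}_T-(s-1)\delta_2-\delta+E(\beta\phi),
\]
and the lemma follows provided $s>1$ can be chosen so that $(s-1)\delta_2+\delta$ strictly exceeds the correction $E(\beta\phi)-I^{(2)}_M$.

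The main obstacle is this final quantitative balance: both the Lemma~\ref{tech1} gain $\delta(s)$ and the correction $E(\beta\phi)-I^{(2)}_M=(\beta^2-1)E(\phi)+(\beta^2-\beta^{2p})\mathbb{F}_{2p}(\phi,\phi)$ are $O(s-1)$ as $s\to 1^+$, so the strict inequality hinges on the $s$-independent gap $\delta_2>0$ from Lemma~\ref{VarE3b}, which enters at the same linear order through the prefactor $(s-1)$, being large enough to dominate the linear coefficient of the correction. A subsidiary technical point is justifying the asymptotic orthogonality $\langle v_2^n,\phi(\cdot-y_n\rho)\rangle_{L^2}\to 0$ without a priori tightness of $\{v_2^n\}$, which is where the radial symmetrization and the pointwise Strauss decay $|v_2^n(x)|\le C|x|^{-(N-1)/2}$ for $|x|\ge 1$ play their role.
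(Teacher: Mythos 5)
Your approach differs from the paper's, and the gap you yourself flag at the end is a genuine one that the proposal does not close. The paper's proof of this case ($b_1$) never needs to graft on a translated copy of $\phi_{M_2}$: it instead scales the \emph{$M$-sequence} component $u_2^n$ (not $v_2^n$) by $s^{1/2}$ with $s=(M_2+T_2)/M_2$, and takes the trial $(v_1^n,\, s^{1/2}u_2^n)$, which is already admissible for $I^{(2)}_{M+T}$ with no mass correction needed. Strictness then comes from a trichotomy on the quantities $D_1=\frac{1}{M_2}\lim_n(E(u_2^n)-\mathbb{F}_p(v_1^n,u_2^n))$ and $D_2=\frac{1}{T_2}\lim_n(E(v_2^n)-\mathbb{F}_p(v_1^n,v_2^n))$: when $D_1\neq D_2$ the strict inequality $D_1<D_2$ (or $D_2<D_1$, scaling $v_2^n$ instead) supplies the strictness for free, and only in the borderline case $D_1=D_2$ is the $\delta$-gain from Lemma~\ref{tech1} invoked, where it is compared against an \emph{exact} (not merely comparable) equality and therefore wins unconditionally.

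Your construction replaces $u_2^n$ by the patched function $s^{1/2}v_2^n+\beta\phi(\cdot-y_n\rho)$ and tries to run the argument unconditionally, without the $D_1$-vs-$D_2$ case split. This is where it breaks. As you note, both the gain $(s-1)\delta_2+\delta(s)$ and the loss $E(\beta\phi)-I^{(2)}_M$ are $O(s-1)$. Expanding to first order in $(s-1)$ with $\beta^2=1-(s-1)T_2/M_2$, the gain has linear coefficient $\delta_2+(p-1)\liminf_n\mathbb{F}_{2p}(v_2^n,v_2^n)$, while the loss has coefficient $\frac{T_2}{M_2}\bigl[(p-1)\mathbb{F}_{2p}(\phi,\phi)-E(\phi)\bigr]$, which is strictly positive (recall $E(\phi)=I^{(2)}_M<0$). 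These two coefficients involve unrelated objects: the first comes from the $I^{(2)}_T$-minimizing sequence, the second from the $I^{(2)}_M$-minimizer $\phi$ and the ratio $T_2/M_2$. Neither Lemma~\ref{tech1} nor Lemma~\ref{VarE3b} provides a quantitative lower bound on $\delta_2$ or $\delta(s)$ that could be compared against this loss, so ``choose $s$ so that the gain exceeds the correction'' is not something that can actually be done in general; taking $s\to 1^+$ doesn't help because the ratio of gain to loss converges to a finite quotient with no guaranteed sign, and taking $s$ large is bounded by $s<1+M_2/T_2$. A secondary issue: the Schwarz symmetrization you invoke to get the Strauss decay $|v_2^n(x)|\lesssim|x|^{-(N-1)/2}$ only improves $\mathcal I$ if $W$ is symmetric \emph{decreasing} (so Riesz's rearrangement inequality applies to the coupling term), but the hypotheses (h1)–(h2) give radial symmetry, decay at infinity, and a one-sided scaling bound, not monotonicity, so this step is not justified under the stated assumptions. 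The fix is to drop the grafting entirely and scale $u_2^n$, recovering the paper's argument.
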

\begin{proof}
Let $\{(0, u_2^n)\}$ and $\{(v_1^n, v_2^n)\}$ be any sequences in $Y_2$ satisfying
\[
\begin{aligned}
&  \lim_{n\to \infty}\|u_2^n\|_{L^2}^2=M_2,\ \lim_{n\to \infty}\|v_j^n\|_{L^2}^2=T_j, \\
& \lim_{n\to \infty}\mathcal{I}(0, u_2^n)=I^{(2)}_M, \ \ \mathrm{and}\ \lim_{n\to \infty}\mathcal{I}(v_1^n, v_2^n)=I^{(2)}_T.
\end{aligned}
\]
As in the previous lemma, after passing to a subsequence if necessary, we consider the following values
\[
\begin{aligned}
D_1=\frac{1}{M_2}\lim_{n\to \infty}\left(E(u_2^n)-\mathbb{F}_p(v_1^n, u_2^n) \right),\
 D_2=\frac{1}{T_2}\lim_{n\to \infty}\left(E(v_2^n)-\mathbb{F}_p(v_1^n, v_2^n) \right).
\end{aligned}
\]
We consider three cases: $D_1<D_2$, $D_1>D_2$, and $D_1=D_2.$ Assume first that $D_1<D_2.$
Let $s=(M_2+T_2)/M_2.$ Since $s>1$ and $p\geq 2,$ it follows that
\begin{equation}\label{2Lcaseb1a}
\begin{aligned}
I^{(2)}_{M+T}& \leq \lim_{n\to \infty}\mathcal{I}(v_1^n, s^{1/2}u_2^n)\\
& =\lim_{n\to \infty}\left(E(v_1^n)+s\|\nabla u_2^n\|_{L^2}^2-s^p\mathbb{F}_{2p}(u_2^n, u_2^n)-s^{p/2}\mathbb{F}_p(v_1^n, u_2^n) \right)\\
& \leq \lim_{n\to \infty}\left(E(v_1^n)+sE(u_2^n)-s\mathbb{F}_p(v_1^n, u_2^n) \right).
\end{aligned}
\end{equation}
Since $E(f)-\mathbb{F}_p(f,g)\leq \mathcal{I}(0,f)$ and $D_1<D_2$, it follows from \eqref{2Lcaseb1a} that
\begin{equation}\label{2Lcaseb1a2}
\begin{aligned}
I^{(2)}_{M+T}& \leq I^{(2)}_M+\lim_{n\to \infty}\left(E(v_1^n)+\frac{T_2}{M_2}\left(E(u_2^n)-\mathbb{F}_p(v_1^n, u_2^n) \right) \right)\\
& = I^{(2)}_M+\lim_{n\to \infty}E(v_1^n)+\frac{T_2}{M_2}(M_2D_1)\\
& < I^{(2)}_M+\lim_{n\to \infty}E(v_1^n)+T_2D_2\\
&=I^{(2)}_M+\lim_{n\to \infty}\mathcal{I}(v_1^n, v_2^n)
=I^{(2)}_M+I^{(2)}_T,
\end{aligned}
\end{equation}
which is the desired strict inequality. The proof in the case $D_1>D_2$ follows the same steps and we omit the details.
Now consider the case that $D_1=D_2.$ Let $f_2^n=s^{1/2}u_2^n$, where $s$ is defined as above. Then, using Lemma~\ref{tech1}, there exists
a number $\delta>0$ such that for sufficiently large $n,$
\begin{equation}\label{2Lcaseb1a3}
E(f_2^n)=E(s^{1/2}u_2^n)\leq sE(u_2^n)-\delta.
\end{equation}
Since $s>1$ and $p\geq 2$, we have that $s^{p/2}\geq s.$ Then it is easy to see that $\mathbb{F}_p(f,s^{1/2}g)\geq s\mathbb{F}_p(f,g).$
Using this observation and \eqref{2Lcaseb1a3}, we obtain that
\begin{equation}\label{2Lcaseb1a4}
\begin{aligned}
I^{(2)}_{M+T}&\leq \lim_{n\to \infty}\mathcal{I}(v_1^n, s^{1/2}u_2^n)\\
& = \lim_{n\to \infty}\left(E(v_1^n)+E(s^{1/2}u_2^n)-\mathbb{F}_p(v_1^n, s^{1/2}u_2^n) \right)\\
& \leq \lim_{n\to \infty}\left(E(v_1^n)+sE(u_2^n)-s\mathbb{F}_p(v_1^n, u_2^n) \right)-\delta.
\end{aligned}
\end{equation}
Once we have obtained \eqref{2Lcaseb1a4}, the desired strict inequality follows using the same lines as in \eqref{2Lcaseb1a2}.
\end{proof}
To complete the proof of Lemma~\ref{2MainSlemIa}, it only remains to establish \eqref{2MainSubadd} in the case $(b_2).$
This will be done in the next lemma.
\begin{lem}
For any $M\in \{0\}\times \mathbb{R}_{+}$ and $T\in \mathbb{R}_{+}\times \{0\}$, one has
\[
I^{(2)}_{M+T}<I^{(2)}_M+I^{(2)}_T.
\]
\end{lem}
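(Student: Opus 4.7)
The plan is to exploit the fact that when $M=(0,M_2)$ and $T=(T_1,0)$, the two ``source'' infima $I^{(2)}_M$ and $I^{(2)}_T$ are effectively single-component problems and hence are realized exactly by the one-component minimizers $\phi_{M_2}$ and $\phi_{T_1}$ furnished by Lemma~\ref{tech3}. Admissibility in $I^{(2)}_M$ forces the first component to vanish identically, so $\mathcal{I}(0,\phi_2)=E(\phi_2)$ and therefore $I^{(2)}_M=E(\phi_{M_2})$; symmetrically $I^{(2)}_T=E(\phi_{T_1})$.

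I would then use $(\phi_{T_1},\phi_{M_2})$ directly as a trial function for $I^{(2)}_{M+T}=I^{(2)}_{(T_1,M_2)}$, which is admissible since $\|\phi_{T_1}\|_{L^2}^2=T_1$ and $\|\phi_{M_2}\|_{L^2}^2=M_2$. A direct expansion yields
\[
\mathcal{I}(\phi_{T_1},\phi_{M_2}) = E(\phi_{T_1})+E(\phi_{M_2})-\mathbb{F}_p(\phi_{T_1},\phi_{M_2}) = I^{(2)}_M+I^{(2)}_T-\mathbb{F}_p(\phi_{T_1},\phi_{M_2}),
\]
so the lemma reduces to verifying $\mathbb{F}_p(\phi_{T_1},\phi_{M_2})>0$. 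This case is notably simpler than $(a_1)$ and $(b_1)$: no rescaling gymnastics are required, because the very coupling between the two already-optimized components produces a strict improvement for free.

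The only delicate point is the strict positivity of this cross term. By Lemma~\ref{tech3} both $\phi_{T_1}$ and $\phi_{M_2}$ are strictly positive everywhere on $\mathbb{R}^N$, so the pointwise product $|\phi_{T_1}(x)|^p|\phi_{M_2}(y)|^p$ is strictly positive for every $(x,y)\in\mathbb{R}^{2N}$. On the other hand $W$ cannot vanish almost everywhere, for otherwise every nonlocal contribution in $\mathcal{I}$ and $E$ would be identically zero and one would conclude $I^{(2)}_M\geq 0$, contradicting the strict negativity established in Lemma~\ref{NegIM2}. Hence the integrand $W(|x-y|)|\phi_{T_1}(x)|^p|\phi_{M_2}(y)|^p$ is nonnegative everywhere and strictly positive on a set of positive measure in $\mathbb{R}^{2N}$, giving $\mathbb{F}_p(\phi_{T_1},\phi_{M_2})>0$, from which $I^{(2)}_{M+T}\leq \mathcal{I}(\phi_{T_1},\phi_{M_2})<I^{(2)}_M+I^{(2)}_T$ follows.

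The main, and rather modest, obstacle is precisely this last nonvanishing assertion; it is resolved by combining the pointwise positivity of the single-component minimizers from Lemma~\ref{tech3} with the nontriviality of $W$ forced by $I^{(2)}_M<0$. Otherwise the entire argument is a one-line trial-function computation, which is in sharp contrast to the three-case rescaling analysis (comparing the densities $A_1,A_2,B_1,B_2$ and $D_1,D_2$) needed in cases $(a_1)$ and $(b_1)$.
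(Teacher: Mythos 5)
Your proof is correct and follows essentially the same route as the paper: take $(\phi_{T_1},\phi_{M_2})$ as a trial pair, identify $I^{(2)}_{(0,M_2)}=E(\phi_{M_2})$ and $I^{(2)}_{(T_1,0)}=E(\phi_{T_1})$, and conclude from $\mathbb{F}_p(\phi_{T_1},\phi_{M_2})>0$. The only difference is that the paper declares this positivity ``obvious'' while you justify it by combining the strict positivity of the Lemma~\ref{tech3} minimizers with the observation that $W$ cannot vanish a.e.\ (else $I^{(2)}_M\geq 0$, contradicting Lemma~\ref{NegIM2}) --- a small but welcome elaboration.
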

\begin{proof}
Using Lemma~\ref{tech3}, let $\phi_{M_2}>0$ and $\phi_{T_1}>0$ be such that
\[
\begin{aligned}
& E(\phi_{M_2})=\inf\left\{E(f):f\in H^1(\mathbb{R}^N)\ \mathrm{and}\ \int_{\mathbb{R}^N}|f|^2\ dx=M_2 \right\},\\
& E(\phi_{T_1})=\inf\left\{E(f):f\in H^1(\mathbb{R}^N)\ \mathrm{and}\ \int_{\mathbb{R}^N}|f|^2\ dx=T_1 \right\}.
\end{aligned}
\]
Then it is obvious that $\mathbb{F}_p(\phi_{M_2},\phi_{T_1})>0.$ Thus, it follows that
\[
\begin{aligned}
I^{(2)}_{M+T}&\leq \mathcal{I}(\phi_{M_2},\phi_{T_1})=E(\phi_{M_2})+E(\phi_{T_1})-\mathbb{F}_p(\phi_{M_2},\phi_{T_1})\\
&=I^{(2)}_M+I^{(2)}_T-\mathbb{F}_p(\phi_{M_2},\phi_{T_1})<I^{(2)}_M+I^{(2)}_T,
\end{aligned}
\]
which is the desired strict inequality.
\end{proof}
We are now able to rule out the case $0<Z^{(2)}<M_1+M_2.$

\begin{lem}\label{dicMainm2}
Suppose that $\{u_1^n,u_2^n\}_{n\geq 1}\subset Y_2$ be any minimizing
sequence of $I^{(2)}_M$ and $Z^{(2)}$ be defined by \eqref{lamDef} with $m=2.$
Then, one has
\[
Z^{(2)}=M_1+M_2.
\]
\end{lem}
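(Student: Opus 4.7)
The plan is to argue by contradiction, combining the preceding three structural results: the lemma preventing vanishing ($Z^{(2)}>0$), Lemma~\ref{revslem} (which converts dichotomy into a splitting $Z^{(2)}=T_1+T_2$ with $I^{(2)}_T+I^{(2)}_{M-T}\le I^{(2)}_M$), and the strict subadditivity Lemma~\ref{2MainSlemIa}. Since vanishing has already been excluded, the only remaining possibility beyond tightness is that $0<Z^{(2)}<M_1+M_2$, and the whole point of establishing Lemma~\ref{2MainSlemIa} was to preclude exactly this.

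First I would suppose, for contradiction, that $0<Z^{(2)}<M_1+M_2$. Applying Lemma~\ref{revslem} with $m=2$, there exists $T=(T_1,T_2)\in[0,M_1]\times[0,M_2]$ such that
\[
Z^{(2)}=T_1+T_2 \qquad\text{and}\qquad I^{(2)}_T+I^{(2)}_{M-T}\le I^{(2)}_M.
\]
The assumption $Z^{(2)}>0$ forces $T\neq\mathbf{0}$, and the assumption $Z^{(2)}<M_1+M_2$ forces $M-T\neq\mathbf{0}$. Since $M\in\mathbb{R}_{+}^2$ by hypothesis, the pair $(T,M-T)$ meets exactly the hypotheses of Lemma~\ref{2MainSlemIa}: both are in $\mathbb{R}_{\geq 0}^2$, neither is $\mathbf{0}$, and their sum $M$ lies in $\mathbb{R}_{+}^2$. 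Invoking that lemma with the substitution $(M,T)\mapsto(T,M-T)$ yields the strict inequality
\[
I^{(2)}_M \;<\; I^{(2)}_T+I^{(2)}_{M-T},
\]
which directly contradicts the inequality provided by Lemma~\ref{revslem}. Hence the dichotomy regime cannot occur. Combined with the preceding lemma excluding vanishing, this leaves only $Z^{(2)}=M_1+M_2$.

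There is essentially no new obstacle at this stage of the proof; the entire burden was absorbed into the case analysis $(a_1),(a_2),(a_3),(b_1),(b_2)$ carried out in Lemma~\ref{2MainSlemIa}. In fact, the reason all of those sub-cases had to be treated is precisely that $T$ produced by Lemma~\ref{revslem} may have one vanishing component: e.g.\ $T=(T_1,0)$ with $T_1>0$ corresponds to mass escaping only in the first component, in which case $M-T=(M_1-T_1,M_2)$ still has both components nonzero. The only subtle verification is therefore bookkeeping: checking that regardless of which coordinates of $T$ or $M-T$ happen to vanish (as long as neither is the zero vector), Lemma~\ref{2MainSlemIa} applies. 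This is guaranteed by its statement, which already enumerates every admissible configuration, so the argument closes cleanly.
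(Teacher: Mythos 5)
Your proof is correct and follows essentially the same route as the paper: assume $0<Z^{(2)}<M_1+M_2$, invoke Lemma~\ref{revslem} to produce $T$ with $Z^{(2)}=T_1+T_2$ and $I^{(2)}_T+I^{(2)}_{M-T}\le I^{(2)}_M$, verify that $(T,M-T)$ satisfies the hypotheses of the strict subadditivity Lemma~\ref{2MainSlemIa}, and derive the contradictory strict inequality. The only addition over the paper's argument is your explanatory remark about why the full case analysis in Lemma~\ref{2MainSlemIa} was needed, which is accurate but not part of the logical deduction itself.
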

\begin{proof}
Since the case $Z^{(2)}=0$ has been ruled out, we show that $Z^{(2)}\not\in (0, M_1+M_2).$
Suppose that $Z^{(2)} \in (0, M_1+M_2)$ holds. Let $T$ be defined as in Lemma~\ref{revslem}
and define $S=(S_1, S_2)$ by $S_j=M_j-T_j$, $j=1,2.$ Then, we have that
$S+T\in \mathbb{R}_{+}^2.$ We also have $T_1+T_2=Z^{(2)}>0$ and
\[
S_1+S_2=M_1+M_2-(T_1+T_2)=M_1+M_2-Z^{(2)}>0.
\]
Applying Lemma~\ref{2MainSlemIa}, we then have
\[
I_T^{(2)}+I_S^{(2)}>I_{S+T}^{(2)}.
\]
This is same as $I_T^{(2)}+I_{M-T}^{(2)}>I_M,$ contradicting the result of Lemma~\ref{revslem}. This proves that
$Z^{(2)}\not\in (0, M_1+M_2)$ and we must have $Z^{(2)}= M_1+M_2.$
\end{proof}

\begin{lem}\label{2MainlemEx}
For every $M\in \mathbb{R}_{+}^2$, the set $\Lambda^{(2)}(M)$ is nonempty. Moreover, the following statements hold.

\smallskip

(i) For every $(\phi_1, \phi_2)\in \Lambda^{(2)}(M)$, there exists $\lambda_1$ and $\lambda_2$ such that
\begin{equation}\label{SWdef2a}
(\psi_1(x,t), \psi_2(x,t))=(e^{-i\lambda_1 t} \phi_1(x),e^{-i\lambda_2 t} \phi_2(x))
\end{equation}
is a standing-wave solution of \eqref{NChoM2} with $m=2.$

\smallskip

(ii) The Lagrange multipliers $\lambda_1$ and $\lambda_2$ satisfy $\lambda_j>0.$

\smallskip

(iii) For every $(\phi_1,\phi_2)\in \Lambda^{(2)}(M)$ there exists $\theta_j\in \mathbb{R}$ and real-valued functions
$\phi_{M_1}$ and $\phi_{M_2}$ such that
\[
\phi_{M_j}(x)>0\ \ \mathrm{and}\ \ \phi_j(x)=e^{i\theta_j}\phi_{M_j}(x),\ x\in \mathbb{R}^N.
\]
\end{lem}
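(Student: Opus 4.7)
The plan is to combine the results established in Section~\ref{section2}. For any $M=(M_1,M_2)\in \mathbb{R}_{+}^2$, I would first take a minimizing sequence $\{(u_1^n,u_2^n)\}$ for $I_M^{(2)}$, which by Lemma~\ref{NegIM2} is bounded in $Y_2$. The associated L\'evy concentration function yields a limit $Z^{(2)}\in [0,M_1+M_2]$. Vanishing was excluded earlier in the section, and dichotomy is ruled out by Lemma~\ref{dicMainm2}, which in turn rests on the strict subadditivity inequality of Lemma~\ref{2MainSlemIa}. Consequently $Z^{(2)}=M_1+M_2$, and Lemma~\ref{VarE5} supplies a translated subsequence converging in $Y_2$ to a minimizer $(\phi_1,\phi_2)\in \Lambda^{(2)}(M)$, proving this set is nonempty. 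Because $\mathcal{I}$ and the two $L^2$-constraints are $C^1$ on $Y_2$ with linearly independent differentials at any point of $\Sigma_M^{(2)}$, the Lagrange multiplier theorem produces real $\lambda_1,\lambda_2$ for which $(\phi_1,\phi_2;\lambda_1,\lambda_2)$ solves \eqref{NChoM} with $m=2$; substitution then shows that \eqref{SWdef2a} solves \eqref{NChoM2}, giving~(i).

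For~(ii), I would test the $j$-th Euler--Lagrange equation against $\bar{\phi}_j$ and integrate to obtain the identity
\[
\lambda_j M_j \;=\; 2p\,\mathbb{F}_{2p}(\phi_j,\phi_j)+p\,\mathbb{F}_p(\phi_1,\phi_2)-\|\nabla \phi_j\|_{L^2}^2.
\]
The strong $Y_2$-convergence established above, combined with the continuity of $\mathbb{F}_q$ proved inside Lemma~\ref{VarE5}, lets me pass Lemma~\ref{VarE3b} to the limit and deduce
\[
\|\nabla \phi_j\|_{L^2}^2 \;\leq\; 2\,\mathbb{F}_{2p}(\phi_j,\phi_j)+2\,\mathbb{F}_p(\phi_1,\phi_2)-2\delta_j.
\]
Substituting and using $p\geq 2$ yields $\lambda_j M_j\geq (2p-2)\mathbb{F}_{2p}(\phi_j,\phi_j)+(p-2)\mathbb{F}_p(\phi_1,\phi_2)+2\delta_j\geq 2\delta_j>0$, so $\lambda_j>0$.

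For~(iii), I would first invoke the diamagnetic inequality $|\nabla |\phi||\leq |\nabla \phi|$, so that $(|\phi_1|,|\phi_2|)\in \Sigma_M^{(2)}$ and $\mathcal{I}(|\phi_1|,|\phi_2|)\leq \mathcal{I}(\phi_1,\phi_2)=I_M^{(2)}$. Equality must therefore hold, making $(|\phi_1|,|\phi_2|)$ itself a minimizer. Writing $\phi_j=\rho_j e^{i\sigma_j}$ with $\rho_j=|\phi_j|$, the equality case in the diamagnetic inequality (equivalently, the imaginary part of the Euler--Lagrange equation, which reduces to $\mathrm{div}(\rho_j^2\nabla \sigma_j)=0$) forces $\rho_j\,|\nabla \sigma_j|\equiv 0$ a.e. Standard elliptic regularity for \eqref{NChoM}, obtained by bootstrapping the Hardy--Littlewood--Sobolev estimate, yields continuity of $\rho_j$, and the strong maximum principle applied to the nonnegative supersolution $\rho_j$ of $(-\Delta+\lambda_j)u\geq 0$ (using $\lambda_j>0$ from~(ii) and the nonnegativity of the right-hand side of \eqref{NChoM}) gives $\rho_j>0$ throughout $\mathbb{R}^N$. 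Therefore $\nabla \sigma_j\equiv 0$, so $\sigma_j$ is a constant $\theta_j$, and $\phi_j=e^{i\theta_j}\phi_{M_j}$ with $\phi_{M_j}:=\rho_j>0$.

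The main obstacle I anticipate is the strict positivity in~(ii): one must convert the algebraic identity for $\lambda_j M_j$ into a genuinely strict inequality, and it is precisely the strict negativity provided by Lemma~\ref{VarE3b} together with the hypothesis $p\geq 2$ that prevents the estimate from degenerating to $\lambda_j\geq 0$. A secondary technical point in~(iii) is justifying the global polar decomposition on $\{\rho_j>0\}$ and transferring it to all of $\mathbb{R}^N$, which relies on the elliptic regularity of $\phi_j$ and the strict positivity of $\rho_j$ furnished by the maximum principle.
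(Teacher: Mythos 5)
Your proof is correct and follows essentially the same route as the paper: existence via ruling out vanishing and dichotomy (Lemma~\ref{dicMainm2}, resting on Lemma~\ref{2MainSlemIa}) and invoking Lemma~\ref{VarE5}; positivity of $\lambda_j$ by testing the Euler--Lagrange equations against $\bar\phi_j$ and combining with the strict negativity from Lemma~\ref{VarE3b}; and phase reduction via the diamagnetic inequality plus the strong maximum principle. The only differences are cosmetic: where the paper applies Lemma~\ref{VarE3b} directly to the constant minimizing sequence $(\phi_1,\phi_2)$ you pass the lemma to the limit (both are fine), and where the paper delegates the final step of~(iii) to Theorem~5 of \cite{Bell} you spell out the polar-decomposition and regularity argument; your version of the integrated identity $\lambda_j M_j = 2p\,\mathbb{F}_{2p}(\phi_j,\phi_j)+p\,\mathbb{F}_p(\phi_1,\phi_2)-\|\nabla\phi_j\|_{L^2}^2$ is also the cleaner form of the paper's \eqref{LagEqnspo}.
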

\begin{proof}
Let $(\phi_1, \phi_2)\in \Lambda^{(2)}(M).$ Then
the Lagrange multiplier principle implies that each function $(\phi_1, \phi_2)$
satisfies Euler-Lagrange equations
\begin{equation}\label{LagEqns}
-\Delta \phi_j+\lambda_j \phi_j=\sum_{k=1}^2(W\star |\phi_k|^p)|\phi_j|^p\phi_j,\ 1\leq j\leq 2,
\end{equation}
where $\lambda_1$ and $\lambda_2$ are Lagrange multipliers. Consequently the function $(\psi_1, \psi_2)$ defined
by \eqref{SWdef2a} is a standing wave for \eqref{NChoM2} with $m=2.$
Multiplying the first equation by $\overline{\phi_1}$ and the section equation by $\overline{\phi_2}$, and integrating by parts, we get
\begin{equation}\label{LagEqnspo}
\begin{aligned}
-\lambda_j\|\phi_j\|_{L^2}^2&=\|\nabla \phi_j\|_{L^2}^2-\sum_{k,j=1}^2\int_{\mathbb{R}^N\times \mathbb{R}^N}W(x-y)Q(\phi_k,\phi_j)\ dxdy\\
& =\|\nabla \phi_j\|_{L^2}^2-2p\left(\mathbb{F}_{2p}(\phi_1,\phi_1)+\mathbb{F}_{2p}(\phi_2,\phi_2)+\mathbb{F}_{p}(\phi_1,\phi_2)\right).
\end{aligned}
\end{equation}
Applying Lemma~\ref{VarE3b} with $(u_1^n, u_2^n)=(\phi_1, \phi_2)$, it follows that there exists $\delta_j>0$ such that
\[
\|\nabla \phi_j\|_{L^2}^2-2\mathbb{F}_{2p}(\phi_1,\phi_1)-2\mathbb{F}_{2p}(\phi_2,\phi_2)-2\mathbb{F}_{p}(\phi_1,\phi_2)< 0.
\]
Since $2p>2$, it follows that the right-hand side of \eqref{LagEqnspo} is negative.
Then it follows that $\lambda_j$ must be positive.

\smallskip

Next, let $(\phi_1, \phi_2)\in \Lambda^{(2)}(M)$ be a complex-valued minimizer of $I_{M_1, M_2}^{(2)}.$ Using the fact that
\[
u\in H^1(\mathbb{R}^N) \Rightarrow |u|\in H^1(\mathbb{R}^N),\ \|\nabla |u|\|_{L^2}\leq \|\nabla u\|_{L^2},
\]
it follows that $(|\phi_1|, |\phi_2|)\in \Lambda^{(2)}(M)$ as well.
By the strong maximum principle, we infer that
\[
|\phi_1|>0 \ \mathrm{and}\ |\phi_2|>0.
\]
We have that
\begin{equation}\label{LagEnspox}
\mathcal{I}(|\phi_1|,|\phi_2|)-\mathcal{I}(\phi_1, \phi_2) =\frac{1}{2}\sum_{j=1}^2\|\nabla |\phi_j|\|_{L^2}^2 -\frac{1}{2}\sum_{j=1}^2 \|\nabla \phi_j\|_{L^2}^2.
\end{equation}
Since both $(\phi_1, \phi_2)$ and $(|\phi_1|, |\phi_2|)$ belong to $\Lambda^{(2)}(M),$ the only possibility \eqref{LagEnspox} can happen is that
\begin{equation}\label{ethePf}
\int_{\mathbb{R}^N}|\nabla \phi_j|^2\ dx=\int_{\mathbb{R}^N}|\nabla |\phi_j||^2\ dx,\ j=1,2.
\end{equation}
Once we have \eqref{ethePf}, a number of techniques are
available to prove item (iii) of Lemma~\ref{2MainlemEx} (see for example, Theorem~5 of \cite{Bell}).
\end{proof}

\section{The problem with three constraints}\label{section3}

In this section we prove the strict subadditivity inequality for $I^{(3)}_M$
and rule out the possible dichotomy of the minimizing sequences.
Throughout this section we shall use the following notation:
\[
\begin{aligned}
& \mathcal{Q}(f,g,h)=E(g)-\mathbb{F}_p(f,g)-\mathbb{F}_p(g,h),\\
& \mathcal{D}(f,g,h)=\mathcal{Q}(f,g,h)-\mathbb{F}_p(f,h),
\end{aligned}
\]
where the functional $E$ is as defined in Lemma~\ref{tech1}.
With these definitions, we can write
\begin{equation}\label{tech3Suba}
\begin{aligned}
\mathcal{I}(f,g,h)&=\mathcal{Q}(f,g,h)+\langle E\rangle(f,h)-\mathbb{F}_p(f,h)\\
& = \mathcal{D}(f,g,h)+\langle E\rangle(f,h)\\
& = \mathcal{Q}(g,f,h)+\langle E\rangle(g,h)-\mathbb{F}_p(g,h).
\end{aligned}
\end{equation}
 The strict subadditivity
condition for the function $I^{(3)}_M$ takes the following form
\begin{lem}\label{MainSlemI}
Let $\mathbb{R}_{\geq 0}=[0, \infty).$ For any $M, T\in \mathbb{R}_{\geq 0}^3$ satisfying
$M, T\neq \{\mathbf{0}\}$ and $S=M+T\in \mathbb{R}_{+}^3,$ one has
\begin{equation}\label{MainSubadd}
I^{(3)}_S<I^{(3)}_M+I^{(3)}_T.
\end{equation}
\end{lem}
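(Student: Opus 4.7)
The plan is to mimic the two-component case of Lemma~\ref{2MainSlemIa}, proceeding by case analysis on which of the six scalars $M_1,M_2,M_3,T_1,T_2,T_3$ are zero. By symmetry (swapping $M \leftrightarrow T$ and permuting indices) the number of essentially distinct cases is controlled, but still larger than in Section~\ref{section2}. In each case I would take subsequences $\{(u_1^n,u_2^n,u_3^n)\}$ minimizing $I^{(3)}_M$ and $\{(v_1^n,v_2^n,v_3^n)\}$ minimizing $I^{(3)}_T$, and assemble these into a trial triple for $I^{(3)}_{M+T}$ via two devices: (i) rescaling a component $u_j^n \mapsto \ell^{1/2}u_j^n$ with $\ell=(M_j+T_j)/M_j$ when $M_j>0$, and (ii) translating $v_k^n$ to $v_k^n(\,\cdot - x_n\rho\,)$ with $x_n\to 0$ so that its support becomes disjoint from the relevant $u_k^n$ and the convolution-interaction energy between them vanishes in the limit. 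The rescaling device works because $p\geq 2$ and $\ell>1$ give $\ell^p\geq \ell$, hence $\mathbb{F}_{2p}(\ell^{1/2}u,\ell^{1/2}u)\geq \ell\,\mathbb{F}_{2p}(u,u)$ and $\mathbb{F}_p(\ell^{1/2}u,v)\geq \ell\,\mathbb{F}_p(u,v)$, exactly as in \eqref{Pfcaseq2}.

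In the most delicate case, where every entry of $M$ and $T$ is strictly positive, I would introduce for $j=1,2,3$ the six normalized Lagrange-type averages
\begin{align*}
A_j &= \frac{1}{M_j}\lim_{n\to\infty}\Bigl(E(u_j^n)-\sum_{k\neq j}\mathbb{F}_p(u_j^n,u_k^n)\Bigr),\\
B_j &= \frac{1}{T_j}\lim_{n\to\infty}\Bigl(E(v_j^n)-\sum_{k\neq j}\mathbb{F}_p(v_j^n,v_k^n)\Bigr),
\end{align*}
extracting subsequences so that the limits exist. If $A_j<B_j$ (resp.\ $A_j>B_j$) for some $j$, I would rescale $u_j^n \mapsto \ell^{1/2}u_j^n$ and translate the companion components of $v^n$ to have supports disjoint from those of $u^n$ where needed; together with the identity \eqref{tech3Suba} for $\mathcal{I}$ in terms of $\mathcal{Q}$ and $\langle E\rangle$, and estimates of the type \eqref{Pfcaseq3}, this yields the strict inequality directly. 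If instead $A_j=B_j$ for every $j$, I would rescale all three components simultaneously by factors $\geq 1$ and invoke Lemma~\ref{tech1} on at least one component to extract a strict gain $\delta>0$, exactly as in the $A_1=A_2$ subcase of Section~\ref{section2}.

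For the configurations in which some $M_j$ or $T_j$ vanish, the construction degenerates: the index $j$ with $M_j=0$ contributes no rescaling, and the accompanying $v_j^n$ can be used directly (after translation) to supply the corresponding mass. When a whole coordinate has only one of $M_j,T_j$ positive and some other coordinate has both, I would combine the rescaling device on the mixed coordinate with the translation device on the single-sided coordinate. In the extreme cases where $M$ and $T$ have disjoint supports in $\{1,2,3\}$ (the 3-component analogue of case $(b_2)$), I would simply paste together the $E$-minimizers $\phi_{M_j}$ and $\phi_{T_k}$ supplied by Lemma~\ref{tech3}; the strict gap then comes for free from $\mathbb{F}_p(\phi_{M_j},\phi_{T_k})>0$ since both minimizers are strictly positive.

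The main obstacle is the bookkeeping: there are a fair number of cases, and in each I must track three coupled interaction terms rather than one, choose the correct index to rescale, and verify that the rescaling constants $\ell_j^{p/2}$ and the factors arising from cross terms $\mathbb{F}_p(\ell_j^{1/2}u_j,\ell_k^{1/2}u_k)=\ell_j^{p/2}\ell_k^{p/2}\mathbb{F}_p(u_j,u_k)$ combine to give inequalities of the correct sign. The functionals $\mathcal{Q}$ and $\mathcal{D}$ introduced at the start of Section~\ref{section3} are designed precisely to package these triple interactions so that the two-component rescaling lemma can be applied coordinate-by-coordinate; a clean proof will exploit the identities \eqref{tech3Suba} to reduce each case to an inequality about one fixed coordinate at a time, reusing the estimates from Lemmas~\ref{tech1} and \ref{VarE3b} (suitably extended from two to three components).
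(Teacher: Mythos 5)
Your high-level strategy is the same as the paper's: enumerate the possible zero-patterns of $M,T$, and in each case build a trial triple for $I^{(3)}_{M+T}$ from minimizing sequences for $I^{(3)}_M$ and $I^{(3)}_T$ via a mix of $\ell^{1/2}$-rescaling and disjoint-support translation, then extract the strict gap from Lemma~\ref{tech1}. Your treatment of the ``$A_j=B_j$ for all $j$'' subcase of the all-positive configuration is a small but genuine deviation from the paper, which instead rescales only two components, pastes $u_3^n+\widetilde{v}_3^n$ for the third, and introduces a \emph{second} pair of averages $\Pi_1,\Pi_2$; with the factor-of-two bookkeeping $\sum_j T_jB_j=\lim\bigl[\sum_j E(v_j^n)-2\sum_{j<k}\mathbb{F}_p(v_j^n,v_k^n)\bigr]\leq I^{(3)}_T$ handled carefully, your simultaneous rescaling variant does close and is arguably cleaner.

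There is, however, a concrete gap in your disjoint-support case (the paper's $(B_2)$: $M=(0,M_2,0)$, $T=(T_1,0,T_3)$). You propose to paste the single-equation $E$-minimizers $\phi_{M_2},\phi_{T_1},\phi_{T_3}$ from Lemma~\ref{tech3}. But $E(\phi_{T_1})+E(\phi_{T_3})-\mathbb{F}_p(\phi_{T_1},\phi_{T_3})\geq I^{(2)}_{T_1,T_3}=I^{(3)}_T$ with possible strict inequality, since each $\phi_{T_j}$ minimizes $E$ alone while the pair need not minimize the coupled two-component functional. So $\mathcal{I}(\phi_{M_2},\phi_{T_1},\phi_{T_3})=I^{(3)}_M+\bigl[E(\phi_{T_1})+E(\phi_{T_3})-\mathbb{F}_p(\phi_{T_1},\phi_{T_3})\bigr]-\mathbb{F}_p(\phi_{M_2},\phi_{T_1})-\mathbb{F}_p(\phi_{M_2},\phi_{T_3})$, and the positive excess in the $T$-bracket could overwhelm the two negative cross terms, leaving the required inequality unproved. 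The fix is exactly what the paper does: take $(\phi_{T_1},\phi_{T_3})$ to be the \emph{two-component} minimizer from Lemma~\ref{2MainlemEx} (not Lemma~\ref{tech3}), so the $T$-bracket equals $I^{(3)}_T$ on the nose and the strict gap comes from $\mathbb{F}_p(\phi_{M_2},\phi_{T_1})+\mathbb{F}_p(\phi_{M_2},\phi_{T_3})>0$ by positivity. A softer concern: in the mixed degenerate cases such as $(A_3)$ (where $M_2=0$ but all else is positive), the paper's comparison quantities, e.g.\ $G_1=T_1\lim\mathcal{Q}(v_2^n,u_1^n,u_3^n)$, deliberately \emph{mix} the two minimizing sequences by inserting $v_2^n$ where the missing $u_2^n$ would stand; your outline does not make this device explicit, and without it the coordinate-by-coordinate recipe you describe will not close those cases.
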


\textbf{Proof of Lemma~\ref{MainSlemI}.}
We use ideas from \cite{SBC, [SB321], Lope}.
Since
$M_1+T_1>0,$
we have the following possibilities: $M_1>0$ and $T_1>0;$ or $M_1=0$ and $T_1>0;$ or
$M_1>0$ and $T_1=0.$ The third case $M_1>0$ and $T_1=0$ can be
reduced to the second case by switching $M_1$ and $T_1$ and so we do not consider it.

\smallskip

In the case when $M_1>0$ and $T_1>0,$ the following situations may arise:
\begin{itemize}
\item[$(A_1)$] $M_1>0,\ T_1>0,\ T_2>0,\ M_2=0,\ T_3=0,\ M_3>0,$
\item[$(A_2)$] $M_1>0,\ T_1>0,\ T_2>0,\ M_2=0,\ T_3>0,\ M_3=0,$
\item[$(A_3)$] $M_1>0,\ T_1>0,\ T_2>0,\ M_2=0,\ T_3>0,\ M_3>0,$
\item[$(A_4)$] $M_1>0,\ T_1>0,\ T_2=0,\ M_2>0,\ T_3=0,\ M_3>0,$
\item[$(A_5)$] $M_1>0,\ T_1>0,\ T_2=0,\ M_2>0,\ T_3>0,\ M_3=0,$
\item[$(A_6)$] $M_1>0,\ T_1>0,\ T_2=0,\ M_2>0,\ T_3>0,\ M_3>0,$
\item[$(A_7)$] $M_1>0,\ T_1>0,\ T_2>0,\ M_2>0,\ T_3=0,\ M_3>0,$
\item[$(A_8)$] $M_1>0,\ T_1>0,\ T_2>0,\ M_2>0,\ T_3>0,\ M_3=0,$
\item[$(A_9)$] $M_1>0,\ T_1>0,\ T_2>0,\ M_2>0,\ T_3>0,\ M_3>0.$
\end{itemize}
Similarly, in the second case, i.e., when $T_1>0$ and $M_1=0,$ one has to consider the following cases:
\begin{itemize}
\item[$(B_1)$] $T_1>0,\ M_1=0,\ T_2=0,\ M_2>0,\ T_3=0,\ M_3>0,$
\item[$(B_2)$] $T_1>0,\ M_1=0,\ T_2=0,\ M_2>0,\ T_3>0,\ M_3=0,$
\item[$(B_3)$] $T_1>0,\ M_1=0,\ T_2=0,\ M_2>0,\ T_3>0,\ M_3>0,$
\item[$(B_4)$] $T_1>0,\ M_1=0,\ T_2>0,\ M_2=0,\ T_3=0,\ M_3>0,$
\item[$(B_5)$] $T_1>0,\ M_1=0,\ T_2>0,\ M_2=0,\ T_3>0,\ M_3>0,$
\item[$(B_6)$] $T_1>0,\ M_1=0,\ T_2>0,\ M_2>0,\ T_3=0,\ M_3>0,$
\item[$(B_7)$] $T_1>0,\ M_1=0,\ T_2>0,\ M_2>0,\ T_3>0,\ M_3=0,$
\item[$(B_8)$] $T_1>0,\ M_1=0,\ T_2>0,\ M_2>0,\ T_3>0,\ M_3>0.$
\end{itemize}
To prove the lemma, it suffices to consider the cases $(A_9)$, $(A_3)$, $(B_3)$, $(B_5)$, and $(B_2)$; since otherwise we can
switch the role of the parameters and reduce to one of these cases. We consider each of these cases separately in the next five lemmas.

\smallskip

Before we begin, we make the following observation.
For any $\ell>1,$ define $u_\ell=\ell^{1/2}u_2$ and let $U=(u_1, u_\ell, u_3).$ Then we have that
\begin{equation}\label{tech3Subb}
\begin{aligned}
\mathcal{Q}(U)&=\frac{\ell}{2}\|\nabla u_2\|_{L^2}^2-\ell^p\mathbb{F}_{2p}(u_2,u_2)-\ell^{p/2}\mathbb{F}_p(u_1,u_2)-\ell^{p/2}\mathbb{F}_p(u_2,u_3)\\
&\leq \ell \left( \|\nabla u_2\|_{L^2}^2-\mathbb{F}_{2p}(u_2,u_2)- \mathbb{F}_p(u_1,u_2)-\mathbb{F}_p(u_2,u_3)\right)\\
& = \ell \left( E(u_2)-\mathbb{F}_p(u_1,u_2)-\mathbb{F}_p(u_2,u_3)\right)=\ell \mathcal{Q}(u_1, u_2, u_3).
\end{aligned}
\end{equation}

\smallskip

The following lemma establishes \eqref{MainSubadd} in the case $(A_9).$
\begin{lem}
For any $M, T\in \mathbb{R}_{+}^3,$ one has $I^{(3)}_{M+T}<I^{(3)}_M+I^{(3)}_T.$
\end{lem}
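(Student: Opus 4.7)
The plan is to construct a trial function for $I_{M+T}^{(3)}$ by rescaling one coordinate of a minimizing sequence for $I_M^{(3)}$ and disjointly combining the other two with translated copies from a minimizing sequence for $I_T^{(3)}$, generalizing case $(a_1)$ of Section~2. Let $(u_1^n, u_2^n, u_3^n)$ minimize $I_M^{(3)}$ and $(v_1^n, v_2^n, v_3^n)$ minimize $I_T^{(3)}$; by density and the reduction $\|\nabla|u|\|_{L^2} \leq \|\nabla u\|_{L^2}$ I may assume every component is nonnegative and compactly supported. After passing to subsequences I may assume the mass-normalized marginal costs
\[
\alpha_j = \frac{1}{M_j}\lim_{n\to\infty}\bigl[E(u_j^n) - \mathbb{F}_p(u_j^n, u_k^n) - \mathbb{F}_p(u_j^n, u_\ell^n)\bigr], \ \beta_j = \frac{1}{T_j}\lim_{n\to\infty}\bigl[E(v_j^n) - \mathbb{F}_p(v_j^n, v_k^n) - \mathbb{F}_p(v_j^n, v_\ell^n)\bigr]
\]
exist for each $j \in \{1, 2, 3\}$ (with $\{j, k, \ell\} = \{1, 2, 3\}$). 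By swapping the roles of $M$ and $T$ if necessary, it suffices to treat the case $\alpha_1 \leq \beta_1$.

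Fix $\ell = (M_1 + T_1)/M_1 > 1$, a unit vector $\rho \in \mathbb{R}^N$, and a sequence $x_n \to \infty$, and set
\[
f_1^n = \ell^{1/2} u_1^n, \qquad f_j^n = u_j^n + \widetilde{v}_j^n \ \ (j = 2, 3), \qquad \widetilde{v}_j^n = v_j^n(\cdot - x_n \rho).
\]
Choosing $x_n$ large enough (while keeping the supports compact) forces disjoint supports of $u_j^n$ and $\widetilde{v}_j^n$ for $j = 2, 3$ and drives every cross integral $\mathbb{F}_p(u_i^n, \widetilde{v}_j^n) \to 0$, thanks to hypothesis (h1). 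Translating $v_2^n$ and $v_3^n$ by the same vector preserves $\mathbb{F}_p(\widetilde{v}_2^n, \widetilde{v}_3^n) = \mathbb{F}_p(v_2^n, v_3^n)$. The three main tools are: the scaling identity $\mathbb{F}_p(\ell^{1/2} f, g) = \ell^{p/2} \mathbb{F}_p(f, g)$ together with $\ell^{p/2} \geq \ell$ (valid since $\ell > 1$ and $p \geq 2$); the disjoint-support decomposition $E(u + v) = E(u) + E(v) - \mathbb{F}_p(u, v)$; and Lemma~\ref{tech1}, which provides $E(\ell^{1/2} u_1^n) \leq \ell E(u_1^n) - \delta$ for some $\delta > 0$ and all large $n$.

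Assembling these, together with the bookkeeping identity $I_M^{(3)} = M_1 \alpha_1 + \lim[E(u_2^n) + E(u_3^n) - \mathbb{F}_p(u_2^n, u_3^n)]$ and its $v$-analog, the estimate should collapse to
\[
\lim_{n\to\infty} \mathcal{I}(f_1^n, f_2^n, f_3^n) \leq I_M^{(3)} + I_T^{(3)} + T_1(\alpha_1 - \beta_1) - \delta,
\]
which is strictly less than $I_M^{(3)} + I_T^{(3)}$ because $\alpha_1 - \beta_1 \leq 0$ and $\delta > 0$. The main obstacle is the careful accounting of cross integrals: expanding $\mathbb{F}_p(f_2^n, f_3^n)$ and the two coupling terms $\mathbb{F}_p(f_1^n, f_j^n)$ produces several interaction terms that must each either be matched against a contribution to $I_M^{(3)}$ or $I_T^{(3)}$, or be driven to zero by the choice of $x_n$. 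Once the ledger is complete, the strictness is automatic from Lemma~\ref{tech1} and $\ell^{p/2} \geq \ell$, so the heavy combinatorial case splits that will encumber the degenerate cases $(A_3), (B_3), (B_5), (B_2)$ do not appear in the fully positive case $(A_9)$.
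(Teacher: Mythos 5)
Your construction is correct, and it is in fact a streamlined version of the paper's argument. The paper also uses the trial function $(\ell^{1/2}u_1^n,\ u_2^n+\widetilde v_2^n,\ u_3^n+\widetilde v_3^n)$, but only after first splitting into $\alpha_1<\beta_1$, $\alpha_1>\beta_1$, $\alpha_1=\beta_1$ (in its notation $L_1<L_2$, $L_1>L_2$, $L_1=L_2$). In the strict subcase the paper gets the gap from $\alpha_1-\beta_1<0$ alone and does not invoke Lemma~\ref{tech1}; in the equality subcase it discards this trial function, introduces a new one that rescales two coordinates ($f_1^n=\ell^{1/2}u_1^n$, $f_2^n=s^{1/2}u_2^n$, $f_3^n=u_3^n+\widetilde v_3^n$) together with a secondary comparison $\Pi_1\lessgtr\Pi_2$, and only then appeals to Lemma~\ref{tech1}. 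You instead observe that Lemma~\ref{tech1} applied to $u_1^n$ (legitimate since $M_1>0$) gives $E(\ell^{1/2}u_1^n)\leq\ell E(u_1^n)-\delta$ unconditionally, so that together with $\ell^{p/2}\mathbb{F}_p\geq\ell\mathbb{F}_p$ the ledger closes to
\[
\lim_{n\to\infty}\mathcal{I}(f_1^n,f_2^n,f_3^n)\leq I_M^{(3)}+I_T^{(3)}+T_1(\alpha_1-\beta_1)-\delta,
\]
which is strictly less than $I_M^{(3)}+I_T^{(3)}$ under the single assumption $\alpha_1\leq\beta_1$ (restoring the other ordering by swapping $M$ and $T$). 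I verified the bookkeeping: $I_M^{(3)}=M_1\alpha_1+\lim\bigl[E(u_2^n)+E(u_3^n)-\mathbb{F}_p(u_2^n,u_3^n)\bigr]$ and its $v$-analog do collapse the estimate exactly as you predict. This buys you the elimination of the $\alpha_1=\beta_1$ sub-branch and its secondary $\Pi_1\lessgtr\Pi_2$ split. One minor inefficiency: you drive the cross terms $\mathbb{F}_p(u_i^n,\widetilde v_j^n)$ to zero via $x_n\to\infty$ and (h1), whereas it suffices to note they are nonnegative (since $W\geq 0$) and simply drop them from the upper bound; the paper does the latter. That aside, the argument is sound, and as you anticipate, it is precisely the degenerate cases with some $M_j$ or $T_j$ equal to zero that will force the heavier case analysis elsewhere in the section.
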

\begin{proof}
For every $M, T\in \mathbb{R}_{+}^3,$ let $\{(u_1^n, u_2^n, u_3^n)\}_{n\geq 1}$ and $\{(v_1^n, v_2^n, v_3^n)\}_{n\geq 1}$
be minimizing sequences
for $I^{(3)}_M$ and $I^{(3)}_T,$ respectively.
Without loss of generality, we may assume that $u_j^n$'s and $v_j^n$'s are real-valued, have compact supports, and
\[
\|u_j^n\|_{L^2}^2=M_j\ \ \ \mathrm{and}\ \ \|v_j^n\|_{L^2}^2=T_j,\ \forall n,\ j=1,2,3.
\]
Define the pair of numbers $(L_1, L_2)\in \mathbb{R}^2$ as follows
\[
L_1=T_1 \lim_{n\to \infty}\mathcal{Q}(u_2^n, u_1^n, u_3^n)\ \ \mathrm{and}\ \ L_2=M_1\lim_{n\to \infty}\mathcal{Q}(v_2^n, v_1^n, v_3^n).
\]
Then, the following situations may occur: $L_1<L_2$ or $L_1>L_2,$ or $L_1=L_2.$ Assume first that $L_1<L_2.$
Define
\begin{equation}\label{ghDef}
\begin{aligned}
& \widetilde{v}_{2}^{n}(\cdot)=v_2^n(\cdot+x_n\rho),\ f_2^n=u_2^n+\widetilde{v}_{2}^{n},\\
& \widetilde{v}_{3}^{n}(\cdot)=v_3^n(\cdot+x_n\rho),\ f_3^n=u_3^n+\widetilde{v}_{3}^{n},
\end{aligned}
\end{equation}
where $\rho$ is a unit vector in $\mathbb{R}^N$ and $x_n$ is such that $x_n\to 0$ as $n\to \infty$ ;
$\widetilde{v}_{2}^{n}$ and $u_2^n$ have disjoint supports; and $\widetilde{v}_{3}^{n}$ and $u_3^n$ have disjoint supports.
Let ${\textstyle \ell=1+\frac{T_1}{M_1}}$ and take the function
$
f_1^n=\ell^{1/2}u_1^n.
$
Then $(f_1^n, f_2^n, f_3^n)\in \Sigma_{M+T}^{(3)}$ and we have
\begin{equation}\label{caseiaMI}
\begin{aligned}
I^{(3)}_{M+T}& \leq \lim_{n\to \infty}\mathcal{I}(f_1^n, f_2^n, f_3^n)\\
& =\lim_{n\to \infty}\left(\langle E\rangle(f_1^n, f_2^n, f_3^n)-\mathbb{F}_p(f_1^n, f_2^n)-\mathbb{F}_p(f_2^n, f_3^n)-\mathbb{F}_p(f_1^n, f_3^n) \right)\\
& \leq \lim_{n\to \infty}\left(\mathcal{D}(u_2^n, \ell^{1/2}u_1^n, u_3^n)+\langle E\rangle(u_2^n,u_3^n, \widetilde{v}_2^n, \widetilde{v}_3^n)
-\mathbb{F}_p(\widetilde{v}_2^n, \widetilde{v}_3^n)\right).
\end{aligned}
\end{equation}
Since $\ell>1$, using \eqref{tech3Subb}, it follows that
\begin{equation}\label{caseia2}
\begin{aligned}
\mathcal{D}(u_2^n, \ell^{1/2}u_1^n, u_3^n)&=\mathcal{Q}(u_2^n, \ell^{1/2}u_1^n, u_3^n)-\mathbb{F}_p(u_2^n,u_3^n) \\
& \leq \ell \mathcal{Q}(u_2^n, u_1^n, u_3^n)-\mathbb{F}_p(u_2^n,u_3^n)\\
&= \mathcal{Q}(u_2^n, u_1^n, u_3^n)+\frac{T_1}{M_1}\mathcal{Q}(u_2^n, u_1^n, u_3^n)-\mathbb{F}_p(u_2^n,u_3^n).
\end{aligned}
\end{equation}
Substituting \eqref{caseia2} into \eqref{caseiaMI} and taking into account the
observation \eqref{tech3Suba},
it follows that
\begin{equation}\label{A9last}
\begin{aligned}
I^{(3)}_{M+T}& \leq \lim_{n\to \infty}\mathcal{I}(u_1^n,u_2^n,u_3^n)+\frac{T_1}{M_1}\frac{L_1}{T_1}+
\lim_{n\to \infty}\left ( \langle E\rangle(\widetilde{v}_2^n, \widetilde{v}_3^n)-\mathbb{F}_p(\widetilde{v}_2^n, \widetilde{v}_3^n)\right)\\
& < I^{(3)}_M+ \frac{L_2}{M_1}+
\lim_{n\to \infty}\left ( \langle E\rangle(\widetilde{v}_2^n, \widetilde{v}_3^n)-\mathbb{F}_p(\widetilde{v}_2^n, \widetilde{v}_3^n)\right)\\
& = I^{(3)}_M+\lim_{n\to \infty}\mathcal{I}(v_1^n, \widetilde{v}_2^n, \widetilde{v}_3^n)=I^{(3)}_M+I^{(3)}_T,
\end{aligned}
\end{equation}
which is the desired strict inequality. The same argument
applies in the case $L_1>L_2$ by switching indices and so we omit the details.
Assume now that $L_1=L_2$ and
consider the numbers
\[
\begin{aligned}
& \Pi_1=\frac{1}{M_2}\lim_{n\to \infty}\left(E(u_2^n)-\frac{1}{p} \int_{\mathbb{R}^{N}}(W\star |u_2^n|^p)|u_3^n|^p \ dx\right),\\
& \Pi_2=\frac{1}{T_2}\lim_{n\to \infty}\left(E(v_2^n)-\frac{1}{p} \int_{\mathbb{R}^{N}}(W\star|v_2^n|^p)|v_3^n|^p\ dx\right).
\end{aligned}
\]
We split the proof into two subcases: $\Pi_1\leq \Pi_2$ and $\Pi_1\geq \Pi_2.$
Since the proofs in both
subcases are similar, we only consider $L_1=L_2$ and $\Pi_1\leq \Pi_2.$
Let $F_n=(f_1^n, f_2^n, f_3^n)$, where
$f_1^n=\ell^{1/2}u_1^n$ with $\ell$ is defined as above, $f_2^n=s^{1/2}u_2^n$
with ${\displaystyle s=1+T_2/M_2}$, and $f_3^n$ is defined as in \eqref{ghDef}.
Since $s>1$,
using Lemma~\ref{tech1}, there exists $\delta>0$ such that
\begin{equation}\label{extraeqns3}
E(f_2^n)=E(s^{1/2}u_2^n)\leq sE(u_2^n)-\delta
\end{equation}
for all sufficiently large $n.$
Since $p\geq 2$, we have that $s^{p/2}\geq s>1.$ Using this fact, it is easy to check that
$\mathbb{F}_p(s^{1/2}f,g)\geq s\mathbb{F}_p(f,g)$ and
$\mathcal{Q}(s^{1/2}f,g,h)\leq \mathcal{Q}(f,g,h).$
Making use of these observations, \eqref{tech3Subb}, \eqref{extraeqns3}, and
taking into account
the definitions
${\displaystyle \ell=(M_1+T_1)/M_1}$ and ${\displaystyle s=(M_2+T_2)/M_2},$
we obtain that
\begin{equation}\label{caseia4ab}
\begin{aligned}
\mathcal{I}(F_n)& \leq \mathcal{D}(f_2^n, f_1^n, u_3^n)+\langle E\rangle(f_2^n, u_3^n, \widetilde{v}_3^n )\\
& \leq \langle E\rangle(f_2^n, u_3^n, \widetilde{v}_3^n )+\ell\mathcal{Q}(f_2^n, u_1^n, u_3^n)-\mathbb{F}_p(s^{1/2}u_2^n,u_3^n)\\
&\leq \langle E\rangle(f_2^n, u_3^n, \widetilde{v}_3^n )+\ell\mathcal{Q}(u_2^n, u_1^n, u_3^n)-s\mathbb{F}_p(u_2^n,u_3^n)\\
&\leq \mathcal{I}(u_1^n, u_2^n, u_3^n)+\frac{T_1}{M_1}\mathcal{Q}(u_2^n, u_1^n, u_3^n)+E(\widetilde{v}_3^n)\\
&\ \ \ + \frac{T_2}{M_2}\left(E(u_2^n)-\mathbb{F}_p(u_2^n,u_3^n) \right)-\delta
\end{aligned}
\end{equation}
Using this last estimate and making use of the assumptions $L_1=L_2$ and $\Pi_1\leq \Pi_2,$
we obtain that
\begin{equation}\label{caseia6}
\begin{aligned}
I^{(3)}_{M+T}& \leq \lim_{n\to \infty}\mathcal{I}(F_n)
 \leq I^{(3)}_M
+\frac{T_1}{M_1}\frac{L_1}{T_1}+\lim_{n\to \infty}E(v_3^n)+\frac{T_2}{M_2}(M_2\Pi_1)-\delta\\
&\leq I^{(3)}_M+\frac{L_2}{M_1}+\lim_{n\to \infty}E(v_3^n)+T_2\Pi_2-\delta=I^{(3)}_M+I^{(3)}_T-\delta,
\end{aligned}
\end{equation}
which gives the desired strict inequality.
\end{proof}
The next lemma establishes \eqref{MainSubadd} in the case $(A_3).$
\begin{lem}
For any $T\in \mathbb{R}_{+}^3$ and $M\in \mathbb{R}_{+}\times \{0\}\times \mathbb{R}_{+},$ one has
\[
I^{(3)}_{S}<I^{(3)}_M+I^{(3)}_M,\ \ S=M+T.
\]
\end{lem}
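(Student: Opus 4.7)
The plan is to adapt the three-case argument used for $(A_9)$, accounting for the simplifying fact that $M_2=0$ forces the ``$u_2$'' component of any minimizing sequence for $I_M^{(3)}$ to vanish. Pick real-valued, compactly supported minimizing sequences $\{(u_1^n,0,u_3^n)\}$ for $I_M^{(3)}$ with $\|u_1^n\|_{L^2}^2=M_1$, $\|u_3^n\|_{L^2}^2=M_3$, and $\{(v_1^n,v_2^n,v_3^n)\}$ for $I_T^{(3)}$ with $\|v_j^n\|_{L^2}^2=T_j$. Passing to subsequences, define
\[
L_1=T_1\lim_{n\to\infty}\bigl(E(u_1^n)-\mathbb{F}_p(u_1^n,u_3^n)\bigr),\qquad
L_2=M_1\lim_{n\to\infty}\mathcal{Q}(v_2^n,v_1^n,v_3^n),
\]
where $L_1/T_1$ is exactly $\mathcal{Q}(u_2^n,u_1^n,u_3^n)$ in the limit once $u_2^n=0$ is inserted. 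These will play the same role they did in the proof of $(A_9)$.

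Next I would construct the trial function $(f_1^n,f_2^n,f_3^n)=\bigl(\ell^{1/2}u_1^n,\;\widetilde{v}_2^n,\;u_3^n+\widetilde{v}_3^n\bigr)$ with $\ell=(M_1+T_1)/M_1$, where $\widetilde{v}_i^n=v_i^n(\cdot+x_n\rho)$ is translated so that $\widetilde{v}_i^n$ and $u_j^n$ have disjoint supports and all genuine cross-interactions $\mathbb{F}_p(\ell^{1/2}u_1^n,\widetilde{v}_i^n)$, $\mathbb{F}_p(\widetilde{v}_2^n,u_3^n)$ tend to zero, by assumption (h1). A direct verification gives $(f_1^n,f_2^n,f_3^n)\in\Sigma_{M+T}^{(3)}$ since $\|f_1^n\|^2_{L^2}\to M_1+T_1$, $\|f_2^n\|^2_{L^2}\to T_2=M_2+T_2$ (using $M_2=0$), and $\|f_3^n\|^2_{L^2}\to M_3+T_3$. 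Using $\ell>1$ and $p\geq 2$ together with the inequalities in \eqref{tech3Subb} one gets, after letting $n\to\infty$ and exploiting disjointness,
\[
\mathcal{I}(f_1^n,f_2^n,f_3^n)\leq \mathcal{I}(u_1^n,0,u_3^n)+\frac{T_1}{M_1}\bigl(E(u_1^n)-\mathbb{F}_p(u_1^n,u_3^n)\bigr)+E(\widetilde{v}_2^n)+E(\widetilde{v}_3^n)-\mathbb{F}_p(\widetilde{v}_2^n,\widetilde{v}_3^n)+o(1).
\]

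I would then split into three cases. If $L_1<L_2$, passing to the limit yields
$I_{M+T}^{(3)}\leq I_M^{(3)}+L_1/M_1+\lim(E(\widetilde{v}_2^n)+E(\widetilde{v}_3^n)-\mathbb{F}_p(\widetilde{v}_2^n,\widetilde{v}_3^n))
<I_M^{(3)}+L_2/M_1+\lim(E(\widetilde{v}_2^n)+E(\widetilde{v}_3^n)-\mathbb{F}_p(\widetilde{v}_2^n,\widetilde{v}_3^n))=I_M^{(3)}+I_T^{(3)}$,
where the last equality is the identity $L_2/M_1+\lim(E(v_2^n)+E(v_3^n)-\mathbb{F}_p(v_2^n,v_3^n))=\lim\mathcal{I}(v_1^n,v_2^n,v_3^n)$. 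The case $L_1>L_2$ is reduced to the previous one by building the symmetric trial function $(\ell'^{1/2}v_1^n,\,v_2^n,\,v_3^n+\widetilde{u}_3^n)$ with $\ell'=(M_1+T_1)/T_1$ and swapping the roles of $u$ and $v$. For the borderline case $L_1=L_2$, I would use the same trial function as above but invoke Lemma~\ref{tech1} (applicable because $M_1>0$) to upgrade the soft bound $E(\ell^{1/2}u_1^n)\leq \ell E(u_1^n)$ to $E(\ell^{1/2}u_1^n)\leq \ell E(u_1^n)-\delta$ for some $\delta>0$ and all large $n$; this produces the gap needed to replace equality by strict inequality in the final chain, yielding $I_{M+T}^{(3)}\leq I_M^{(3)}+I_T^{(3)}-\delta$.

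The main obstacle is bookkeeping rather than a deep difficulty: one must be careful that the translations $x_n\rho$ are chosen so that every undesired cross interaction vanishes in the limit while $\|f_j^n\|_{L^2}^2$ converges to the correct target $M_j+T_j$, and one must verify that the coefficient manipulations behind $\mathcal{Q}(0,\ell^{1/2}u_1^n,u_3^n)\leq \ell\bigl(E(u_1^n)-\mathbb{F}_p(u_1^n,u_3^n)\bigr)$ genuinely go through with $u_2^n\equiv 0$ (which they do, since the vanished $\mathbb{F}_p(u_2^n,\ell^{1/2}u_1^n)$ term simply drops out and the remaining estimate reduces to the inequality $\ell^{p/2}\geq\ell$). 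The delicate step is the equality case $L_1=L_2$, where the strict inequality cannot be extracted from linear comparison and must come from Lemma~\ref{tech1}; this is exactly where the hypothesis $M_1>0$ in case $(A_3)$ is essential.
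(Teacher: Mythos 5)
Your proposal is correct in substance, but it takes a genuinely different route from the paper's proof, and in fact a somewhat more streamlined one. The paper's proof defines $G_1=T_1\lim\mathcal{Q}(v_2^n,u_1^n,u_3^n)$ with $v_2^n$ from the $T$-side minimizing sequence coupled into the $M$-side quantity, keeps the trial second component as $f_2^n=v_2^n$ (untranslated), and in the equality case $G_1=G_2$ introduces a second pair of diagnostics $\Gamma_1,\Gamma_2$ and applies Lemma~\ref{tech1} to a $t^{1/2}$-rescaled $u_3^n$. You instead take $L_1=T_1\lim\bigl(E(u_1^n)-\mathbb{F}_p(u_1^n,u_3^n)\bigr)$ (purely from the $M$-side sequence, i.e., the literal $u_2^n=0$ specialization of the $(A_9)$ quantity), translate $\widetilde v_2^n$ along with $\widetilde v_3^n$, and in the borderline case $L_1=L_2$ apply Lemma~\ref{tech1} directly to the $\ell^{1/2}$-rescaled $u_1^n$; this eliminates the extra $\Gamma_1,\Gamma_2$ subcase split altogether. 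Your construction also has a structural advantage: since $\widetilde v_2^n,\widetilde v_3^n$ are shifted by the \emph{same} vector, the quantity $E(\widetilde v_2^n)+E(\widetilde v_3^n)-\mathbb{F}_p(\widetilde v_2^n,\widetilde v_3^n)$ equals $E(v_2^n)+E(v_3^n)-\mathbb{F}_p(v_2^n,v_3^n)$ exactly by translation invariance, so the identity $L_2/M_1+\lim\bigl(E(v_2^n)+E(v_3^n)-\mathbb{F}_p(v_2^n,v_3^n)\bigr)=I_T^{(3)}$ requires no limiting continuity argument; in the paper's version the term $\mathbb{F}_p(v_2^n,\widetilde v_3^n)$ couples a translated and an untranslated object and needs a small-shift continuity step.

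One point you should correct: you assert that the cross-interactions $\mathbb{F}_p(\ell^{1/2}u_1^n,\widetilde v_i^n)$ and $\mathbb{F}_p(\widetilde v_2^n,u_3^n)$ ``tend to zero, by assumption (h1).'' That is neither needed nor generally true for the translation being used; the operative mechanism is simply that $W\geq 0$ makes every $\mathbb{F}_p(\cdot,\cdot)\geq 0$, so these terms enter with a minus sign in $\mathcal{I}(F_n)$ and may be discarded to obtain an upper bound. Disjoint supports are needed only for the sum $f_3^n=u_3^n+\widetilde v_3^n$, to split the $L^2$, gradient, and $\mathbb{F}_{2p}$ terms additively (dropping the resulting nonnegative cross piece). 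Replacing the incorrect decay claim with this ``drop nonnegative terms'' observation closes the only real gap in the write-up; everything else, including the case $L_1>L_2$ obtained with $\ell'=(M_1+T_1)/T_1$ and the $\delta>0$ gap from Lemma~\ref{tech1} in the case $L_1=L_2$, checks out.
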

\begin{proof}
Let $\{(u_1^n, 0, u_3^n)\}_{n\geq 1}$ and $\{(v_1^n, v_2^n, v_3^n)\}_{n\geq 1}$
be minimizing sequences for $I^{(3)}_{M_1, 0, M_3}$ and $I^{(3)}_{T_1, T_2, T_3},$ respectively.
Define the real numbers
\[
G_1=T_1 \lim_{n\to \infty}\mathcal{Q}(v_2^n, u_1^n, u_3^n)\ \ \mathrm{and}\ \ G_2=M_1 \lim_{n\to \infty}\mathcal{Q}(v_2^n, v_1^n, v_3^n).
\]
Assume first that $G_1<G_2.$
Define $f_3^n$ as follows
\[
\widetilde{v}_3(\cdot)=v_3^n(\cdot + x_n \rho),\ f_3^n=u_3^n+\widetilde{v}_3^n,
\]
where $\rho$ is a unit vector in $\mathbb{R}^N$ ; and $x_n$ is chosen
such that $x_n\to 0$ as $n\to \infty$, and $u_3^n$ and $\widetilde{v}_3^n$ have disjoint supports.
Take $f_1^n=\ell^{1/2}u_1^n$ and $f_2^n=v_2^n,$ where $\ell=1+\frac{T_1}{M_1}.$ Let us write $F_n=(f_1^n, f_2^n, f_3^n)$.
Using the same argument as in \eqref{caseiaMI} and \eqref{caseia2}, we can obtain
\[
\begin{aligned}
\mathcal{I}(F_n)
& \leq \mathcal{I}(u_1^n, 0, u_3^n)+\langle E\rangle(v_2^n, \widetilde{v}_3^n)+\frac{T_1}{M_1}\mathcal{Q}(v_2^n, u_1^n, u_3^n)
-\mathbb{F}_p(v_2^n,\widetilde{v}_3^n).
\end{aligned}
\]
Using this estimate and the assumption $G_1<G_2$,
it follows that
\begin{equation}\label{CaseA31}
\begin{aligned}
I^{(3)}_S & \leq \lim_{n\to \infty}\mathcal{I}(f_1^n, f_2^n, f_3^n) \\
& \leq I^{(3)}_M+\frac{T_1}{M_1}\frac{G_1}{T_1}
 + \lim_{n\to \infty}\left( \langle E\rangle(v_2^n, \widetilde{v}_3^n)-\mathbb{F}_p(v_2^n,\widetilde{v}_3^n)\right)\\
 & < I^{(3)}_M+\frac{G_2}{M_1}+\lim_{n\to \infty}\left( \langle E\rangle(v_2^n, \widetilde{v}_3^n)-\mathbb{F}_p(v_2^n,\widetilde{v}_3^n)\right)\\
 & = I^{(3)}_M+\lim_{n\to \infty}\mathcal{I}(v_1^n, v_2^n, \widetilde{v}_3^n)=I^{(3)}_M+I^{(3)}_T,
\end{aligned}
\end{equation}
which is the desired strict inequality.
The proof for the case $G_1>G_2$ goes through unchanged and we do not repeat here.
Assume now that $G_1=G_2.$
As in the previous case, we consider the numbers
\[
\begin{aligned}
& \Gamma_1=\frac{1}{M_3}\lim_{n\to \infty}\left(E(u_3^n)-\frac{1}{p}\int_{\mathbb{R}^{N}}(W\star |u_3^n|^p)|v_2^n|^p\ dx \right),\\
& \Gamma_2=\frac{1}{T_3}\lim_{n\to \infty}\left(E(v_3^n)-\frac{1}{p}\int_{\mathbb{R}^{N}}(W\star |v_3^n|^p)|v_2^n|^p\ dx \right)
\end{aligned}
\]
and split the proof into two subcases: $\Gamma_1\leq \Gamma_2$ and $\Gamma_1\geq \Gamma_2.$
Consider the case that $G_1=G_2$ and $\Gamma_1\leq \Gamma_2.$ Take the functions
\[
f_1^n=\ell^{1/2}u_1^n,\ f_2^n=v_2^n,\ \mathrm{and}\ f_3^n=t^{1/2}u_3^n,
\]
where $\ell$ is defined as above and $t=(M_3+T_3)/M_3.$
Since $p\geq 2$ and $t>1$, we have that $t^{p/2}\geq t.$ Then it is straightforward
to see that $\mathbb{F}_p(f,t^{1/2}g)\geq t\mathbb{F}_p(f, g)$
and $\mathcal{Q}(f,g,t^{1/2}h)\geq \mathcal{Q}(f,g,h).$
Using these observations and \eqref{tech3Subb}, it follows that
\begin{equation}\label{caseibb1}
\begin{aligned}
\mathcal{I}(F_n) &=\mathcal{Q}(v_2^n, \ell^{1/2}u_1^n, t^{1/2}u_3^n)+\langle E\rangle(f_2^n,f_3^n)-\mathbb{F}_p(f_2^n, f_3^n)\\
& \leq \ell\mathcal{Q}(v_2^n, u_1^n, t^{1/2}u_3^n)+\langle E\rangle(f_2^n,f_3^n)-t\mathbb{F}_p(v_2^n, u_3^n)\\
& \leq \ell\mathcal{Q}(v_2^n, u_1^n, u_3^n)+\langle E\rangle(f_2^n,f_3^n)-t\mathbb{F}_p(v_2^n, u_3^n)
\end{aligned}
\end{equation}
Since $t>1,$ by an application of Lemma~\ref{tech1}, there exists $\delta>0$ such that for all sufficiently large $n,$ we have
\begin{equation}\label{caseibb2}
E(f_3^n)-t\mathbb{F}_p(u_2^n, v_3^n)\leq t\left(E(u_3^n)-\mathbb{F}_p(v_2^n, u_3^n) \right)-\delta.
\end{equation}
Using the definitions of $\ell$ and $t,$ it follows from \eqref{caseibb1} and \eqref{caseibb2} that
\[
\begin{aligned}
\mathcal{I}(F_n) \leq \mathcal{I}(u_1^n, 0, u_3^n)+E(v_2^n)+\frac{T_1}{M_1}\mathcal{Q}(v_2^n, u_1^n, u_3^n)
 +\frac{T_3}{M_3}\left(E(u_3^n)-\mathbb{F}_p(v_2^n, u_3^n) \right)-\delta
\end{aligned}
\]
Using the estimate above and the assumptions $G_1=G_2$, $\Gamma_1\leq \Gamma_2,$ it then follows that
\begin{equation}\label{Ficaseib}
\begin{aligned}
I^{(3)}_S& \leq \lim_{n\to \infty}\mathcal{I}(f_1^n, f_2^n, f_3^n)\\
& \leq  I^{(3)}_M + \lim_{n\to \infty}E(v_2^n)+\frac{T_1}{M_1}\frac{G_1}{T_1}+\frac{T_3}{M_3}(M_3\Gamma_1)-\delta \\
&\leq I^{(3)}_M + \lim_{n\to \infty}E(v_2^n)+\frac{G_2}{M_1}+T_3\Gamma_2-\delta\\
& = I^{(3)}_M+ \lim_{n\to \infty}\mathcal{I}(v_1^n, v_2^n, v_3^n)-\delta=I^{(3)}_M+I^{(3)}_T-\delta,
\end{aligned}
\end{equation}
which gives the desired strict inequality.
The proof in the case $G_1=G_2$ and $\Gamma_1\geq \Gamma_2$ is similar and we omit it.
\end{proof}
The next lemma establishes \eqref{MainSubadd} in the case $(B_3).$
\begin{lem}
For any $M\in \{0\}\times \mathbb{R}_{+}^2$ and $T\in \mathbb{R}_{+}\times \{0\}\times \mathbb{R}_{+},$ one has
\[
I^{(3)}_{M+T}<I^{(3)}_M+I^{(3)}_T.
\]
\end{lem}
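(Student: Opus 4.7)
The plan is to adapt the two-trial strategy used in cases $(A_3)$ and $(A_9)$: pick minimizing sequences for $I^{(3)}_M$ and $I^{(3)}_T$, construct a trial in $\Sigma^{(3)}_{M+T}$ by rescaling the \emph{shared} third component of one of them, and harvest a positive gain $\delta$ from Lemma~\ref{tech1}. Concretely, let $\{(0,u_2^n,u_3^n)\}$ and $\{(v_1^n,0,v_3^n)\}$ be minimizing sequences for $I^{(3)}_M$ and $I^{(3)}_T$ respectively, which may be taken nonnegative. Since position~3 is the only coordinate that receives mass from both $M$ and $T$, it is the natural site for rescaling. Set $\ell=(M_3+T_3)/M_3>1$ and $s=(M_3+T_3)/T_3>1$.

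After passing to a subsequence, introduce the averaged quantities
\[
G_1=T_3\lim_{n\to\infty}\bigl[E(u_3^n)-\mathbb{F}_p(u_2^n,u_3^n)\bigr],\qquad G_2=M_3\lim_{n\to\infty}\bigl[E(v_3^n)-\mathbb{F}_p(v_1^n,v_3^n)\bigr].
\]
Since $(u_2^n,u_3^n)$ and $(v_1^n,v_3^n)$ are minimizing sequences for the two-parameter problems $I^{(2)}_{M_2,M_3}$ and $I^{(2)}_{T_1,T_3}$ respectively, Lemma~\ref{VarE3b} forces $G_1<0$ and $G_2<0$. I then split into the exhaustive cases $G_1\leq G_2$ and $G_1>G_2$. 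In the first case I take the trial $F_n=(v_1^n,u_2^n,\ell^{1/2}u_3^n)$, which (after a harmless renormalization tending to $1$) lies in $\Sigma^{(3)}_{M+T}$. Lemma~\ref{tech1}, applicable because $M_3>0$, supplies $\delta>0$ with $E(\ell^{1/2}u_3^n)\leq \ell E(u_3^n)-\delta$, while $p\geq 2$ and $\ell>1$ give $\mathbb{F}_p(f,\ell^{1/2}g)\geq \ell\mathbb{F}_p(f,g)$. Expanding $\mathcal{I}(F_n)$ and discarding the nonpositive cross contributions $-\mathbb{F}_p(v_1^n,u_2^n)$ and $-\ell\mathbb{F}_p(v_1^n,u_3^n)$ yields the key regrouped inequality
\[
\mathcal{I}(F_n)\leq \mathcal{I}(0,u_2^n,u_3^n)+E(v_1^n)+\frac{T_3}{M_3}\bigl[E(u_3^n)-\mathbb{F}_p(u_2^n,u_3^n)\bigr]-\delta,
\]
whose limit equals $I^{(3)}_M+I^{(3)}_T+(G_1-G_2)/M_3-\delta\leq I^{(3)}_M+I^{(3)}_T-\delta$.

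If instead $G_1>G_2$, I use the symmetric trial $(v_1^n,u_2^n,s^{1/2}v_3^n)$ and apply Lemma~\ref{tech1} to $v_3^n$; the parallel computation produces the bound $I^{(3)}_M+I^{(3)}_T+(G_2-G_1)/T_3-\delta'<I^{(3)}_M+I^{(3)}_T$. The main technical obstacle is purely algebraic bookkeeping: regrouping the terms so that a complete $\mathcal{I}(0,u_2^n,u_3^n)$ or $\mathcal{I}(v_1^n,0,v_3^n)$ appears together with an explicit multiple of the Lemma~\ref{VarE3b} quantity whose sign can be compared across the two sequences. The final renormalization ensuring $\|f_j^n\|^2=(M+T)_j$ exactly contributes only $o(1)$ and does not affect the strictness in the limit.
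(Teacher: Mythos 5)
Your proposal is correct and follows essentially the same route as the paper: rescale the shared third component by $\ell^{1/2}$ (the paper's $t^{1/2}$), harvest the $\delta$-gain from Lemma~\ref{tech1}, and dichotomize on a limiting quantity attached to the third component of each sequence. The only cosmetic difference is that you drop the mixed cross terms $\mathbb{F}_p(v_1^n,u_3^n)$ and $\mathbb{F}_p(v_3^n,u_2^n)$ before defining the comparison quantities, so your $G_1,G_2$ are slightly simpler than the paper's $C_1=T_3\lim\mathcal{Q}(v_1^n,u_3^n,u_2^n)$ and $C_2=M_3\lim\mathcal{Q}(v_1^n,v_3^n,u_2^n)$; this makes the final identification $\lim E(v_1^n)+G_2/M_3=I^{(3)}_T$ an equality rather than an inequality, but the structure and conclusion are the same.
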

\begin{proof}
Let $\{(0,u_2^n, u_3^n)\}_{n\geq 1}$ and $\{(v_1^n, 0, v_3^n)\}_{n\geq 1}$ be minimizing sequences for $I^{(3)}_M$ and
$I^{(3)}_T$ respectively.
Let $(C_1, C_2)\in \mathbb{R}^2$ be defined by
\[
C_1=T_3\lim_{n\to \infty}\mathcal{Q}(v_1^n, u_3^n, u_2^n)\ \ \mathrm{and}\ \ C_2=M_3\lim_{n\to \infty}\mathcal{Q}(v_1^n, v_3^n, u_2^n).
\]
We consider two cases: $C_1\leq C_2$ and $C_1\geq C_2.$ Suppose first that $C_1\leq C_2.$ Define $F_n=(f_1^n, f_2^n, f_3^n)$ as follows
\begin{equation}\label{caseiigAFd}
f_1^n=v_1^n,\ \ f_2^n=u_2^n,\ \ \mathrm{and}\ \ f_3^n=t^{1/2}u_3^n,
\end{equation}
where $t$ is defined as in the previous case.
Using Lemma~\ref{tech1}, there exists $\delta>0$ such that $E(f_3^n)\leq tE(u_3^n)-\delta$ for sufficiently large $n.$ Then,
by a direct computation and using the fact $t^{p/2}\geq t$, we obtain that
\begin{equation}\label{caseiigAIna}
\begin{aligned}
\mathcal{I}(F_n)& =\langle E\rangle(F_n)-t^{p/2}\mathbb{F}_p(u_3^n, v_1^n)-t^{p/2}\mathbb{F}_p(u_3^n, u_2^n)-\mathbb{F}_p(v_1^n, u_2^n)\\
&\leq \langle E\rangle(F_n)-t\mathbb{F}_p(u_3^n, v_1^n)-t\mathbb{F}_p(u_3^n, u_2^n)-\mathbb{F}_p(v_1^n, u_2^n)\\
& \leq \langle E\rangle(v_1^n, u_2^n)+t\mathcal{Q}(v_1^n, u_3^n, u_2^n)-\mathbb{F}_p(v_1^n, u_2^n)-\delta\\
& \leq \mathcal{I}(0, u_2^n, u_3^n)+E(v_1^n)+\frac{T_3}{M_3}\mathcal{Q}(v_1^n, u_3^n, u_2^n)-\mathbb{F}_p(v_1^n, u_2^n)-\delta
\end{aligned}
\end{equation}
Since ${\displaystyle \lim_{n\to \infty}\mathcal{Q}(v_1^n, u_3^n, u_2^n)=C_1/T_3, \lim_{n\to \infty}\mathcal{I}(0, u_2^n, u_3^n)=I^{(3)}_M},$
and $\mathbb{F}_p(v_1^n,u_2^n)\geq 0,$ it follows from
\eqref{caseiigAIna} that
\[
\begin{aligned}
I^{(3)}_{M+T}&\leq \lim_{n\to \infty}\mathcal{I}(f_n^n, f_2^n, f_3^n)\\
& \leq I^{(3)}_M+\lim_{n\to \infty}E(v_1^n)+\frac{T_3}{M_3}\frac{C_1}{T_3}-\delta\\
& \leq I^{(3)}_M+\lim_{n\to \infty}E(v_1^n)+\frac{C_2}{M_3}-\delta\\
& \leq I^{(3)}_M+\lim_{n\to \infty}\mathcal{I}(v_1^n, 0, v_3^n)-\delta=I^{(3)}_M+I^{(3)}_T-\delta,
\end{aligned}
\]
which gives the desired strict inequality.
The proof in the case $C_1\geq C_2$ is similar and we do not repeat here.
\end{proof}
The following lemma establishes \eqref{MainSubadd} in the case $(B_5).$

\begin{lem}
For any $M\in \{\mathbf{0}\}\times \mathbb{R}_{+}$ and $T\in \mathbb{R}_{+}^3,$ one has
\[
I^{(3)}_{M+T}<I^{(3)}_M+I^{(3)}_T.
\]
\end{lem}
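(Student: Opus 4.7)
The plan is to mimic the case-analysis proofs of the preceding lemmas (particularly case $(B_3)$) adapted to the configuration $M=(0,0,M_3)$, $T=(T_1,T_2,T_3)$ with $M_3,T_1,T_2,T_3>0$. I will take minimizing sequences $\{(0,0,u_3^n)\}$ for $I^{(3)}_M$ and $\{(v_1^n,v_2^n,v_3^n)\}$ for $I^{(3)}_T$, both of which by density, truncation, and a translation of $u_3^n$ may be assumed real-valued, nonnegative, and compactly supported, with all relevant cross-sequence limits existing after passing to a subsequence. The key auxiliary quantities are
\[
C_1=T_3\lim_{n\to\infty}\mathcal{Q}(v_1^n,u_3^n,v_2^n),\qquad C_2=M_3\lim_{n\to\infty}\mathcal{Q}(v_1^n,v_3^n,v_2^n),
\]
together with the fundamental inequality forced by $\mathbb{F}_p\ge 0$,
\[
\frac{C_1}{T_3}=\lim_n\bigl[E(u_3^n)-\mathbb{F}_p(v_1^n,u_3^n)-\mathbb{F}_p(u_3^n,v_2^n)\bigr]\le \lim_n E(u_3^n)=I^{(3)}_M. \quad (\star)
\]

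In the first subcase $C_1\le C_2$, I take the trial function $F_n=(v_1^n,v_2^n,t^{1/2}u_3^n)$ with $t=(M_3+T_3)/M_3>1$, so that $F_n\in\Sigma^{(3)}_{M+T}$ after a trivial normalization. Since $M_3>0$, Lemma~\ref{tech1} applied to $\{(0,0,u_3^n)\}$ at index $j=3$ yields $\delta>0$ with $E(t^{1/2}u_3^n)\le tE(u_3^n)-\delta$ for $n$ large; combined with $t^{p/2}\ge t$ (valid because $t>1$ and $p\ge 2$) on the coupling terms, a direct expansion gives
\[
\mathcal{I}(F_n)\le \bigl[E(v_1^n)+E(v_2^n)-\mathbb{F}_p(v_1^n,v_2^n)\bigr]+t\,\mathcal{Q}(v_1^n,u_3^n,v_2^n)-\delta.
\]
Writing $t=1+T_3/M_3$ and passing to the limit, the desired strict inequality $I^{(3)}_{M+T}<I^{(3)}_M+I^{(3)}_T$ reduces to
\[
\Bigl(\tfrac{C_1}{T_3}-I^{(3)}_M\Bigr)+\tfrac{C_1-C_2}{M_3}<\delta,
\]
which holds because the first parenthesis is nonpositive by $(\star)$ and the second term is nonpositive by the hypothesis of this subcase.

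In the complementary subcase $C_1\ge C_2$, I instead rescale the $v_3^n$ slot: $F_n=(v_1^n,v_2^n,s^{1/2}v_3^n)$ with $s=(M_3+T_3)/T_3>1$. Since $T_3>0$, Lemma~\ref{tech1} now applies to $\{(v_1^n,v_2^n,v_3^n)\}$ at $j=3$ to give $\delta'>0$ with $E(s^{1/2}v_3^n)\le sE(v_3^n)-\delta'$, and the same type of expansion (together with $s^{p/2}\ge s$) yields $\mathcal{I}(F_n)\le \mathcal{I}(v_1^n,v_2^n,v_3^n)+(M_3/T_3)\mathcal{Q}(v_1^n,v_3^n,v_2^n)-\delta'$. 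Passing to the limit gives $I^{(3)}_{M+T}\le I^{(3)}_T+C_2/T_3-\delta'$, and the strict subadditivity is reduced to $C_2/T_3<I^{(3)}_M+\delta'$; this is immediate from $C_2\le C_1\le T_3 I^{(3)}_M$, where the first inequality is the current hypothesis and the second is $(\star)$.

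The main technical point I expect to be delicate is the handling of cross-sequence interactions $\mathbb{F}_p(v_j^n,u_3^n)$: since $\{u_3^n\}$ and $\{v_j^n\}$ arise from independent variational problems, these integrals have no pre-determined limit. I deal with this by fixing one translate of $u_3^n$ once and for all and extracting a single diagonal subsequence so that all four cross limits exist; the subsequent argument depends only on $(\star)$ and the dichotomy $C_1\lessgtr C_2$, not on the specific values of those limits.
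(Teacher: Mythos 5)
Your proof is correct and follows essentially the same strategy as the paper's: the same dichotomy on the pair you call $C_1, C_2$ (the paper's $D_1, D_2$), the same trial function $(v_1^n, v_2^n, t^{1/2}u_3^n)$ in the first subcase, and the same use of Lemma~\ref{tech1} to extract the $\delta>0$. If anything you are tidier—you retain the $\mathbb{F}_p(v_1^n,v_2^n)$ coupling term throughout (the paper's display quietly drops it, which as written would spoil the final identification of the limit with $I^{(3)}_T$, though the intent is clear), you isolate the key algebraic reduction $(\star)$, and you write out the $C_1\ge C_2$ subcase with the rescaled $v_3^n$ slot that the paper leaves to the reader.
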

\begin{proof}
Let $\{(0, 0, u_3^n)\}_{n\geq 1}$ and $\{(v_1^n, v_2^n, v_3^n)\}_{n\geq 1}$ be minimizing sequences
for $I^{(3)}_{0, 0, M_3}$ and $I^{(3)}_T$ respectively.
Let $(D_1, D_2)\in \mathbb{R}^2$ be defined as
\[
D_1=T_3\lim_{n\to \infty}\mathcal{Q}(v_1^n, u_3^n, v_2^n)\ \ \mathrm{and}\ \ D_2=M_3\lim_{n\to \infty}\mathcal{Q}(v_1^n, v_3^n, v_2^n).
\]
As before, we divide the proof into two cases $D_1\leq D_2$ and $D_1\geq D_2.$ In the first case $D_1\leq D_2,$ define
$F_n=(f_1^n, f_2^n, f_3^n)\in Y_3$ as follows
\[
f_1^n=v_1^n,\ f_2^n=v_2^n,\ \ \mathrm{and}\ \ f_3^n=t^{1/2}u_3^n,
\]
where $t$ is given by $\displaystyle{t=(M_3+T_3)/M_3}.$
Using Lemma~\ref{tech1}, there exists a number $\delta>0$ such that $E(f_3^n)\leq tE(u_3^n)-\delta$ for sufficiently large $n.$
Then, as in the previous case, it follows that
\[
\begin{aligned}
\mathcal{I}(F_n)& =\langle E\rangle(F_n)-t^{p/2}\mathbb{F}_p(u_3^n,v_1^n)-t^{p/2}\mathbb{F}_p(u_3^n,v_2^n)-\mathbb{F}_p(v_2^n, v_1^n)\\
& \leq \langle E\rangle(v_1^n, v_2^n)+t\mathcal{Q}(v_1^n,u_3^n, v_2^n)-\mathbb{F}_p(v_2^n, v_1^n)-\delta \\
&\leq \mathcal{I}(0, 0, u_3^n)+ \langle E\rangle(v_1^n, v_2^n)+ \frac{T_3}{M_3}\mathcal{Q}(v_1^n,u_3^n, v_2^n)-\delta.
\end{aligned}
\]
Since ${\displaystyle \mathcal{I}(0, 0, u_3^n)\to I^{(3)}_\tau}$ and $D_1\leq D_2$, it follows from the above estimate that
\[
\begin{aligned}
I^{(3)}_{M+T} & \leq \lim_{n\to \infty}\mathcal{I}(f_1^n, f_2^n, f_3^n)\\
&\leq I^{(3)}_M +\lim_{n\to \infty}\langle E\rangle(v_1^n, v_2^n)+\frac{T_3}{M_3}\frac{D_1}{T_3}-\delta \\
&\leq I^{(3)}_M +\lim_{n\to \infty}\langle E\rangle(v_1^n, v_2^n)+\frac{D_2}{M_3}-\delta \\
& = I^{(3)}_M+\lim_{n\to \infty}\mathcal{I}(v_1^n, v_2^n, v_3^n)-\delta=I^{(3)}_M+I^{(3)}_T-\delta,
\end{aligned}
\]
which gives the desired strict inequality.
The case $D_1\geq D_2$ uses the same argument and we do not repeat here.
\end{proof}

\begin{lem}
For any $M\in \{0\}\times \mathbb{R}_{+}\times \{0\}$ and $T\in \mathbb{R}_{+}\times \{0\}\times \mathbb{R}_{+},$ one has
\[
I^{(3)}_{M+T}<I^{(3)}_M+I^{(3)}_T.
\]
\end{lem}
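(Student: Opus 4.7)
The plan is to mirror the proof of the analogous case $(b_2)$ lemma from Section~\ref{section2}, by building an explicit competitor out of minimizers of the two reduced subproblems. Here $M=(0,M_2,0)$ forces the first and third components to vanish, so $\mathcal{I}(0,\phi_2,0)=E(\phi_2)$ and $I^{(3)}_M=\inf\{E(h):h\in \Sigma_{M_2}\}$; Lemma~\ref{tech3} supplies a positive real-valued minimizer $\phi_{M_2}$ attaining this value. Likewise $T=(T_1,0,T_3)$ forces $\phi_2\equiv 0$, so that $\mathcal{I}(\phi_1,0,\phi_3)=E(\phi_1)+E(\phi_3)-\mathbb{F}_p(\phi_1,\phi_3)$ and hence $I^{(3)}_T=I^{(2)}_{T_1,T_3}$. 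Lemma~\ref{2MainlemEx} yields a minimizer $(v_1,v_3)\in \Lambda^{(2)}(T_1,T_3)$ which, after multiplication by suitable phases, may be taken pointwise positive.

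Next I would use the triple $(v_1,\phi_{M_2},v_3)\in \Sigma^{(3)}_{M+T}$ as a test function. Expanding the three-component energy directly yields
\[
\mathcal{I}(v_1,\phi_{M_2},v_3)=E(\phi_{M_2})+\bigl(E(v_1)+E(v_3)-\mathbb{F}_p(v_1,v_3)\bigr)-\mathbb{F}_p(v_1,\phi_{M_2})-\mathbb{F}_p(\phi_{M_2},v_3),
\]
which by the previous paragraph equals $I^{(3)}_M+I^{(3)}_T-\mathbb{F}_p(v_1,\phi_{M_2})-\mathbb{F}_p(\phi_{M_2},v_3)$. Taking the infimum on the left and combining with strict positivity of the two cross terms gives the desired bound.

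The main obstacle is precisely the strict positivity of these cross terms. It is here that one uses $v_1,\phi_{M_2},v_3>0$ pointwise together with the fact that the nonnegative, radial potential $W$ is not identically zero; the scaling estimate (h2) then guarantees that $W>0$ on a set of positive measure, so that each double integral $\mathbb{F}_p(v_1,\phi_{M_2})$ and $\mathbb{F}_p(\phi_{M_2},v_3)$ is strictly positive. (For the Coulomb-type potential $W(x)=|x|^{-\alpha}$ this is obvious.) One then concludes
\[
I^{(3)}_{M+T}\leq \mathcal{I}(v_1,\phi_{M_2},v_3)<I^{(3)}_M+I^{(3)}_T,
\]
as required. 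All the remaining work is either a direct algebraic expansion of $\mathcal{I}$ or a citation of the existence results already recorded in Lemmas~\ref{tech3} and~\ref{2MainlemEx}, so no translation device, dilation, or Lemma~\ref{tech1} refinement is needed in this case.
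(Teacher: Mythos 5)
Your proof is correct and is essentially the paper's own argument: build the competitor $(\phi_{T_1},\phi_{M_2},\phi_{T_3})$ from the minimizers supplied by Lemmas~\ref{tech3} and~\ref{2MainlemEx}, expand $\mathcal{I}$ to exhibit $I^{(3)}_M+I^{(3)}_T$ minus the two cross terms $\mathbb{F}_p(\phi_{T_1},\phi_{M_2})$ and $\mathbb{F}_p(\phi_{M_2},\phi_{T_3})$, and use their strict positivity. One small slip is worth noting: (h2) alone does not rule out $W\equiv 0$ (the inequality is vacuous there), so the positivity of the cross terms really rests on the standing nontriviality of $W$ that is already implicit in Lemma~\ref{NegIM2} (where $I^{(m)}_M<0$ would fail for $W\equiv 0$); the paper simply asserts $\mathbb{F}_p>0$ without comment.
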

\begin{proof}
Using Lemma~\ref{tech3}, let $\phi_{M_2}>0$ be such that
\[
E(\phi_{M_2})=\inf\left\{E(f):f\in H^1(\mathbb{R}^N)\ \mathrm{and}\ \|f\|_{L^2}^2=M_2 \right\}.
\]
Lemma~\ref{2MainlemEx} implies that there exist functions $\phi_{T_1}>0$ and $\phi_{T_3}>0$ such that
\[
\mathcal{I}(\phi_{T_1},\phi_{T_3})=\inf\left\{\mathcal{I}(f,g):f,g\in H^1(\mathbb{R}^N)\ \mathrm{and}\ \|f\|_{L^2}^2=T_1, \|g\|_{L^2}^2=T_3 \right\}.
\]
Clearly, we have that $\mathbb{F}_p(\phi_{M_2},\phi_{T_1})>0$ and $\mathbb{F}_p(\phi_{M_2},\phi_{T_3})>0$.
Then we obtain
\[
\begin{aligned}
I^{(3)}_{M+T} &\leq \mathcal{I}(\phi_{M_2},\phi_{T_1}, \phi_{T_3})\\
& = E(\phi_{M_2})+\mathcal{I}(\phi_{T_1},\phi_{T_3})-\mathbb{F}_p(\phi_{M_2},\phi_{T_1})-\mathbb{F}_p(\phi_{M_2},\phi_{T_3})\\
& = I^{(3)}_M+I^{(3)}_T-\mathbb{F}_p(\phi_{M_2},\phi_{T_1})-\mathbb{F}_p(\phi_{M_2},\phi_{T_3})<I^{(3)}_M+I^{(3)}_T,
\end{aligned}
\]
which is the desired strict inequality.
\end{proof}
We have now completed the proof of Lemma~\ref{MainSlemI}. The next lemma rules out the case of dichotomy.

\begin{lem}\label{dicMainm3}
Suppose that $\{(u_1^n,u_2^n, u_3^n)\}_{n\geq 1}\subset Y_3$ be any minimizing
sequence of $I^{(3)}_M$ and $Z^{(3)}$ be defined by \eqref{lamDef} with $m=3.$
Then, one has
\[
Z^{(3)}=M_1+M_2+M_3.
\]
\end{lem}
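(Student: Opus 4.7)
The plan is to mimic the proof of Lemma~\ref{dicMainm2} almost verbatim, with Lemma~\ref{MainSlemI} (strict subadditivity under three constraints) replacing Lemma~\ref{2MainSlemIa}. Since the vanishing case has already been eliminated in the earlier lemma, the only possibility to rule out is $Z^{(3)} \in (0, M_1+M_2+M_3)$, and the argument will proceed by contradiction.

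First I would assume for contradiction that $Z^{(3)} \in (0, M_1+M_2+M_3)$. Then I would invoke Lemma~\ref{revslem} to obtain a vector $T \in [0,M_1]\times[0,M_2]\times[0,M_3]$ such that $T_1+T_2+T_3 = Z^{(3)}$ and
\[
I^{(3)}_T + I^{(3)}_{M-T} \leq I^{(3)}_M.
\]
Setting $S = M - T$, I would check the two hypotheses needed to apply the strict subadditivity inequality: namely $T \neq \mathbf{0}$ (which holds because $T_1+T_2+T_3 = Z^{(3)} > 0$), and $S \neq \mathbf{0}$ (which holds because $S_1+S_2+S_3 = M_1+M_2+M_3 - Z^{(3)} > 0$ by the assumption $Z^{(3)} < M_1+M_2+M_3$). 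Moreover $S + T = M \in \mathbb{R}_+^3$ by construction.

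With these verifications in place, Lemma~\ref{MainSlemI} applies directly and yields
\[
I^{(3)}_M = I^{(3)}_{S+T} < I^{(3)}_S + I^{(3)}_T = I^{(3)}_{M-T} + I^{(3)}_T,
\]
which contradicts the inequality extracted from Lemma~\ref{revslem}. Hence $Z^{(3)} \notin (0, M_1+M_2+M_3)$, and combined with the exclusion of vanishing, we must have $Z^{(3)} = M_1+M_2+M_3$, as required.

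There is essentially no obstacle to this argument, since all the heavy lifting has already been done in Lemma~\ref{MainSlemI} (strict subadditivity in all relevant cases $(A_1)$--$(A_9)$ and $(B_1)$--$(B_8)$) and in Lemma~\ref{revslem} (the abstract dichotomy extraction). The only point deserving mild care is verifying that the triple $T$ produced by Lemma~\ref{revslem} lies in $\mathbb{R}_{\geq 0}^3 \setminus \{\mathbf{0}\}$ and that $S$ does as well, so that the hypotheses $M,T \neq \{\mathbf{0}\}$ and $S=M+T \in \mathbb{R}_+^3$ required by Lemma~\ref{MainSlemI} are met; both follow immediately from $0 < Z^{(3)} < M_1+M_2+M_3$.
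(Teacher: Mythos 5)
Your argument is exactly the paper's intended proof: the paper states that Lemma~\ref{dicMainm3} follows by repeating the proof of Lemma~\ref{dicMainm2} unchanged, and your writeup does precisely that, substituting Lemma~\ref{MainSlemI} for Lemma~\ref{2MainSlemIa} and checking that the triple $T$ from Lemma~\ref{revslem} and its complement $S=M-T$ satisfy the nonvanishing hypotheses needed to invoke strict subadditivity. No gaps.
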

\begin{proof}
The proof goes through unchanged as in the proof of Lemma~\ref{dicMainm2} and we do not repeat here.
\end{proof}

\begin{lem}\label{3MainlemEx}
For every $M\in \mathbb{R}_{+}^3$, the set $\Lambda^{(3)}(M)$ is nonempty. Moreover, the following statements hold.

\smallskip

(i) For every $(\phi_1, \phi_2, \phi_3)\in \Lambda^{(3)}(M)$, there exists $\lambda_1$, $\lambda_2$, and $\lambda_3$ such that
\[
(\psi_1(x,t), \psi_2(x,t), \psi_3(x,t))=(e^{-i\lambda_1 t} \phi_1(x),e^{-i\lambda_2 t} \phi_2(x),e^{-i\lambda_3 t} \phi_3(x) )
\]
is a standing-wave solution of \eqref{NChoM2} with $m=3.$

\smallskip

(ii) The Lagrange multipliers $\lambda_1$, $\lambda_2$, and $\lambda_3$ satisfy $\lambda_j>0.$

\smallskip

(iii) For every $(\phi_1,\phi_2, \phi_3)\in \Lambda^{(3)}(M)$ there exists $\theta_j>0$ and real-valued functions
$\phi_{M_1}$, $\phi_{M_2}$, and $\phi_{M_3}$ such that
\[
\phi_{M_j}(x)>0\ \ \mathrm{and}\ \ \phi_j(x)=e^{i\theta_j}\phi_{M_j}(x),\ x\in \mathbb{R}^N.
\]
\end{lem}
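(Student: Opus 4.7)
The proof parallels that of Lemma~\ref{2MainlemEx}. Nonemptiness of $\Lambda^{(3)}(M)$ follows at once by combining the results proved so far: vanishing has been excluded for any minimizing sequence of $I_M^{(3)}$, and Lemma~\ref{dicMainm3} rules out dichotomy, so Lemma~\ref{VarE5} applies and gives translates of every minimizing sequence converging in $Y_3$ to some element of $\Lambda^{(3)}(M)$. For item (i), the Lagrange multiplier principle applied to the constrained minimization problem defining $I_M^{(3)}$ yields real numbers $\lambda_1,\lambda_2,\lambda_3$ such that
\[
-\Delta \phi_j+\lambda_j\phi_j=\sum_{k=1}^3 (W\star|\phi_k|^p)|\phi_j|^{p-2}\phi_j,\quad j=1,2,3,
\]
and then $\psi_j(x,t)=e^{-i\lambda_j t}\phi_j(x)$ solves \eqref{NChoM2} with $m=3$.

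For (ii), the plan is to establish first the three-component analogue of Lemma~\ref{VarE3b}: for every minimizing sequence $\{(u_1^n,u_2^n,u_3^n)\}$ of $I_M^{(3)}$ and each $j\in\{1,2,3\}$, there exists $\delta_j>0$ such that for large $n$
\[
E(u_j^n)-\sum_{k\ne j}\mathbb{F}_p(u_j^n,u_k^n)\le -\delta_j.
\]
Taking $(u_1^n,u_2^n,u_3^n)\equiv(\phi_1,\phi_2,\phi_3)$, testing the $j$-th Euler--Lagrange equation against $\overline{\phi_j}$, and integrating by parts give
\[
-\lambda_jM_j=\|\nabla\phi_j\|_{L^2}^2-2p\,\mathbb{F}_{2p}(\phi_j,\phi_j)-p\sum_{k\ne j}\mathbb{F}_p(\phi_j,\phi_k).
\]
Using $p\ge 2$, the right side is bounded above by $2E(\phi_j)-2\sum_{k\ne j}\mathbb{F}_p(\phi_j,\phi_k)\le -2\delta_j<0$, so $\lambda_j>0$.

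To prove the analogue of Lemma~\ref{VarE3b} in the three-component setting, I would argue by contradiction: if (say) $\liminf_n(E(\phi_1)-\mathbb{F}_p(\phi_1,\phi_2)-\mathbb{F}_p(\phi_1,\phi_3))\ge 0$ along a minimizing sequence, then
\[
I_M^{(3)}\ge \liminf_{n\to\infty}\bigl(E(\phi_2)+E(\phi_3)-\mathbb{F}_p(\phi_2,\phi_3)\bigr)=\mathcal{I}(0,\phi_2,\phi_3).
\]
I would then insert a bump $\psi_\theta(x)=\theta^{N/2}\psi(\theta x)$ of $L^2$-mass $M_1$ with compact support concentrated near the origin, and use the scaling computation in \eqref{negEST} (based on hypothesis (h2) and $Np-2N+\Gamma<2$) to produce a triple $(\psi_\theta,\phi_2,\phi_3)\in\Sigma_M^{(3)}$ with $\mathcal{I}(\psi_\theta,\phi_2,\phi_3)<\mathcal{I}(0,\phi_2,\phi_3)\le I_M^{(3)}$, a contradiction. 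The other two choices of $j$ are symmetric. This is the only technically nontrivial step in the proof, and I expect it to be the main obstacle, precisely because the presence of the third component means the test configuration must be arranged so that the cross-interactions $\mathbb{F}_p(\psi_\theta,\phi_k)$ are controlled by the $\theta^{-\Gamma}$ gain from (h2) and absorbed into the gradient term.

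For (iii), the argument is standard. Since $u\in H^1(\mathbb{R}^N)$ implies $|u|\in H^1(\mathbb{R}^N)$ with $\|\nabla|u|\|_{L^2}\le\|\nabla u\|_{L^2}$, the triple $(|\phi_1|,|\phi_2|,|\phi_3|)$ also belongs to $\Lambda^{(3)}(M)$. Applying the strong maximum principle to the Euler--Lagrange equations for $|\phi_j|$ yields $|\phi_j|>0$ pointwise. Since both $(\phi_1,\phi_2,\phi_3)$ and $(|\phi_1|,|\phi_2|,|\phi_3|)$ are minimizers, the identity
\[
\mathcal{I}(|\phi_1|,|\phi_2|,|\phi_3|)-\mathcal{I}(\phi_1,\phi_2,\phi_3)=\tfrac{1}{2}\sum_{j=1}^3\bigl(\|\nabla|\phi_j|\|_{L^2}^2-\|\nabla\phi_j\|_{L^2}^2\bigr)
\]
forces $\|\nabla|\phi_j|\|_{L^2}=\|\nabla\phi_j\|_{L^2}$ for each $j$. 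Invoking the standard equality case of the diamagnetic inequality (as cited in the proof of Lemma~\ref{2MainlemEx}), each $\phi_j$ has constant phase, so $\phi_j=e^{i\theta_j}\phi_{M_j}$ with $\phi_{M_j}=|\phi_j|>0$.
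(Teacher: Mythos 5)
Your proof proposal is correct and follows essentially the same route as the paper, which for this lemma simply asserts that the argument is identical to the two-component case (Lemma~\ref{2MainlemEx}); you have merely supplied the details, which are right. One small misconception worth dispelling: you flag the cross-interactions $\mathbb{F}_p(\psi_\theta,\phi_k)$ as a potential obstacle that must be ``controlled by the $\theta^{-\Gamma}$ gain,'' but since $W\geq 0$ these terms are nonnegative and enter $\mathcal{I}$ with a minus sign, so they only help make $\mathcal{I}(\psi_\theta,\phi_2,\phi_3)$ more negative; the contradiction already follows from $E(\psi_\theta)<0$ for small $\theta$ exactly as in Lemma~\ref{NegIM2}, with the extra cross terms simply dropped.
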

\begin{proof}
The proof uses the same argument as in the proof of Lemma~\ref{2MainlemEx} and we omit the details.
\end{proof}

\section{Proof of main results}
We are now prepared to obtain our main results.

\smallskip

\textbf{Proof of Theorem~\ref{exisTHM}.} The proof follows from Lemmas~\ref{2MainlemEx} and \ref{2MainlemEx}.

\smallskip

\textbf{Proof of Theorem~\ref{StaTHM}.} Once we have obtained the relative compactness of minimizing sequences,
the proof of stability result uses a classical argument (\cite{CLi}) which we repeat here for the sake of completeness.
Suppose that $\Lambda^{(m)}(M)$ is not stable. Then there exist a
number $\epsilon >0,$ a sequence of times ${t_{n}},$ and a sequence $\{\psi_n(x,0)\}=
\{(\psi_1^n(x,0), \ldots, \psi_m^n(x,0))\}$ in $Y_m$ such that for all $n,$
\begin{equation}
\label{idconvtoF} \inf\{\|(\psi_1^n(x,0), \ldots, \psi_m^n(x,0))-\phi \|_{Y_m} : \phi \in
\Lambda^{(m)}(M)\}<\frac{1}{n};
\end{equation}
and
\begin{equation}\label{contra}
\inf\{\|(\psi_1^n(\cdot,t_n), \ldots,\psi_m^n(\cdot,t_n) )-\phi \|_{Y_m} : \phi \in
\Lambda^{(m)}(M)\} \geq \epsilon,
\end{equation}
for all $n,$ where $(\psi_1^n(x,t), \ldots,\psi_m^n(x,t) )$ solves \eqref{NChoM2} with initial data
$\psi_n(x,0).$
Since $\psi_n(x,0)$ converges to an element in $\Lambda^{(m)}(M)$ in $Y_m$ norm, and since
for $\phi \in \Lambda^{(m)}(M),$ we have
$\|\phi_j\|_{L^2}^2=M_j, 1\leq j\leq m$, and $\mathcal{I}(\phi)=I_M^{(m)},$
we therefore have
\[
\lim_{n\to \infty}\|\psi_j^n(x,0) \|_{L^2}^2=M_j,\ 1\leq j\leq m,\ \mathrm{and}\ \lim_{n\to \infty}\mathcal{I}(\psi_n(x,0))=I_{M}^{(m)}.
\]
Let us denote $\psi_j^n(\cdot,t_n)$ by $U_1^n$ for $1\leq j\leq m.$
We now choose $\{\alpha_j^n\}\subset \mathbb{R}^N$ such that
\[
\|\alpha_j^n\psi_j^n(x,0)\|_{L^2}^2=M_j,\ \ 1\leq j\leq m
\]
for all $n.$ Thus $\alpha_j^n \to 1$ for each $1\leq j\leq m.$
Hence the sequence $(f_1^n, \ldots, f_m^n)$ defined as $f_j^n=\alpha_j^nU_j^n$
satisfies
$\|f_j^n\|_{L^2}^2=M_j$ and
\begin{equation*}
\lim_{n\to \infty }\mathcal{I}(f_1^n, \ldots, f_m^n)= \lim_{n\to \infty }\mathcal{I}(\psi_n(\cdot, t_n))
 = \lim_{n\to \infty }\mathcal{I}(\psi_{n}(x, 0))=I_M^{(m)}.
\end{equation*}
Therefore $\{(f_1^n, \ldots, f_m^n)\}$ is a minimizing sequence for $I_M^{(m)}.$ From Theorem~\ref{exisTHM}, it follows that
for all $n$ sufficiently large, there exists $\phi_n \in \Lambda^{(m)}(M)$ such that
\[
\|(f_1^n, \ldots, f_m^n)-\phi_n\|_{Y_m} < \epsilon /2.
\]
But then we have
\begin{equation*}
\begin{aligned}
\epsilon & \leq \|\psi_{n}(\cdot, t_n)-\phi_n\|_{Y_m} \\
&\leq \|\psi_{n}(\cdot, t_n)-(f_1^n, \ldots, f_m^n) \|_{Y_m}
+ \|(f_1^n, \ldots, f_m^n) -\phi_n\|_{Y_m} \\
& \leq |1-\alpha_1^n|\cdot \|U_1^n\|_{H^1} +\ldots
+ |1-\alpha_m^n|\cdot \|U_m^n\|_{H^1} + \frac{\epsilon}{2}
\end{aligned}
\end{equation*}
and by taking $n \to \infty,$ we obtain that $\epsilon \leq \epsilon /2,$ a contradiction,
 and we conclude that $\Lambda^{(m)}(M)$ must in fact be stable.

\end{document}